\title{On the dimension of systems of algebraic difference equations}
\author{Michael Wibmer\thanks{This work was supported by the NSF grants DMS-1760212, DMS-1760413, DMS-1760448 and the Lise Meitner grant M 2582-N32 of the Austrian Science Fund FWF.}}
\newtheorem{theo}{Theorem}[section]
\newtheorem{lemma}[theo]{Lemma}
\newtheorem{prop}[theo]{Proposition}
\newtheorem{cor}[theo]{Corollary}
\newtheorem{defi}[theo]{Definition}
\newtheorem{rem}[theo]{Remark}
\theoremstyle{definition}
\newtheorem{ex}[theo]{Example}
\newcommand{\p}{\mathfrak{p}}
\newcommand{\q}{\mathfrak{q}}
\newcommand{\Z}{\mathbb{Z}}
\newcommand{\R}{\mathbb{R}}
\newcommand{\N}{\mathbb{N}}
\newcommand{\Sol}{\operatorname{Sol}}
\newcommand{\V}{\mathbb{V}}
\newcommand{\id}{\operatorname{id}}
\newcommand{\A}{\mathbb{A}}
\renewcommand{\c}{\operatorname{c}}
\renewcommand{\d}{\operatorname{d}}
\renewcommand{\P}{\mathbb{P}}
\newcommand{\s}{\sigma}
\newcommand{\sdim}{\sigma\text{-}\dim}
\newcommand{\sdeg}{\sigma\text{-}\deg}
\newcommand{\ks}{$k$-$\s$}
\newcommand{\ord}{\operatorname{ord}}
\newcommand{\lm}{\operatorname{lm}}
\newcommand{\kb}{\overline{k}}
\begin{document}

\maketitle


\begin{abstract}
	\let\thefootnote\relax\footnotetext{{\em Mathematics Subject Classification Codes:} 12H10, 39A05, 39A10.
		{\em Key words and phrases}:
		Algebraic difference equations, solutions in sequences, difference algebras, difference dimension, covering density.
		
		Michael Wibmer, Institute of Analysis and Number Theory, Graz University of Technology, Kopernikusgasse 24, 8010 Graz, Austria, \texttt{wibmer@math.tugraz.at}}
We introduce a notion of dimension for the solution set of a system of algebraic difference equations that measures the degrees of freedom when determining a solution in the ring of sequences. This number need not be an integer, but, as we show, it satisfies properties suitable for a notion of dimension. We also show that the dimension of a difference monomial is given by the covering density of its set of exponents.
\end{abstract}

\section*{Introduction}

In the algebraic theory of difference equations there has long been a focus on fields, but in the last decade the importance of studying solutions of systems of algebraic difference equations in more general difference rings has more and more been recognized. See e.g., \cite{SingerPut:difference,Hrushovski:elementarytheoryoffrobenius,Tomasic:ATwistedTheoremOfChebotarev,Tomasic:TwistedGaloisStratification,MoosaScanlon:GeneralizedHasseSchmidtVarietiesAndTheirJetSpaces,DiVizioHardouinWibmer:DifferenceGaloisofDifferential, Wibmer:FinitenessPropertiesOfAffineDifferenceAlgebraicGroups, Tomasic:AToposTheoreticViewOfDifferenceAlgebra}. In particular, the solution sets of systems of algebraic difference equations in the ring of sequences, which are of utmost importance from an applied perspective, have been studied in  
\cite{OvchinnikovPogudinScanlon:EffectiveDifferenceElimination} and \cite{PogudinScanlonWibmer:SolvingDifferenceEquationsInSequences}. 
Classical difference algebra (\cite{Cohn:difference, Levin}) provides a notion of dimension for a system of algebraic difference equations via the difference transcendence degree of an extension of difference fields. However, this approach is wholly inadequate for measuring the size of the solution set in the ring of sequences.

In respect to a system $F$ of algebraic difference equations, this shortcoming can be explained via difference ideals and difference Nullstellens\"{a}tze. In terms of difference ideals, the solution set of $F$ in difference fields corresponds to $\{F\}$, the smallest perfect difference ideal containing $F$, while the solution set of $F$ in the ring of sequences, corresponds to $\sqrt{[F]}$, the smallest radical difference ideal containing $F$. One has $\sqrt{[F]}\subseteq \{F\}$ but often this inclusion is strict. Classical difference algebra assigns a dimension to $\{F\}$, it does not provide a sensible notion of dimension for $\sqrt{[F]}$. 

Let us illustrate the situation with the concrete example $F=\{y\s(y),\ yz-z\s(z)\}$. In a difference field, i.e., in a field equipped with an endomorphism $\s$, the equation $y\s(y)=0$ implies $y=0$. But then the second equation $yz-z\s(z)=0$ implies that also $z=0$. Thus, in difference fields, the only solution of $F$ is $(y,z)=(0,0)$ and the corresponding difference dimension is $0$. On the other hand, $F$ has plenty solutions in the ring of sequences. Rewriting the system in sequence notation we obtain 
\begin{equation} \label{eq: ex intro}
y_iy_{i+1}=0, \quad y_iz_{i}-z_iz_{i+1}=0\quad \forall\ i\geq 0.
\end{equation}
For an arbitrary choice of $y_0,y_2,\ldots\in\mathbb{C}$ and $z_1,z_3,\ldots\in\mathbb{C}$ we have a sequence solution
$$ 
\begin{pmatrix} y \\
z
\end{pmatrix}
=\begin{pmatrix} 
y_0 & 0 & y_2 & 0 & \ldots \\
0 & z_1 & 0 & z_3 & \ldots
\end{pmatrix}\in (\mathbb{C}^\N)^2.
$$
According to our definition, the difference dimension of $F$ is $1$ and this number is obtained by counting the degrees of freedom when determining a solution to (\ref{eq: ex intro}): For $i\geq 0$, the maximal number of values of $y_0,y_1,\ldots,y_i,z_0,z_1,\ldots,z_i$ that can be chosen freely in a solution $(y,z)$ of (\ref{eq: ex intro}) is $i+1$. Being able to choose all of these $2(i+1)$ values freely should correspond to difference dimension $2$, thus being able to choose $i+1$ values freely corresponds to difference dimension $1$.

For a general system $F$ of algebraic difference equations, our definition of the difference dimension of $F$ is
$$\sdim(F)=\lim_{i\to\infty}\frac{d_i}{i+1},$$
where $d_i$ is the number of degrees of freedom available when determining a sequence solution of $F$ up to order $i$. Implicit in the above definition is the important and non-trivial fact that this limit exists.

The above definition of the difference dimension can be seen as an algebraic version of the mean dimension, an important numerical invariant of discrete dynamical systems first introduced by M. Gromov in \cite{Gromov:TopologicalInvariantsOfDynamicalSystemsAndSpacesOfHolomorphicMapsI}.	
Our definition is also in line with the description of the transformal dimension given by E. Hrushovski in \cite[Section 4.1]{Hrushovski:elementarytheoryoffrobenius}: ``If one thinks of sequences $(a_i)$ with $\s(a_i) = a_{i+1}$, the transformal dimension measures, intuitively, the
eventual number of degrees of freedom in choosing $a_{i+1}$, given the previous elements of the
sequence.''

In case $F$ is a perfect difference ideal, the above definition agrees with the standard definition via the difference transcendence degree. Thus, our definition of the difference dimension provides a meaningful generalization of the standard definition to situations where the approach via the difference transcendence degree cannot be applied. 

For a system $F$ of algebraic difference equations in $n$ difference variables, the difference dimension of $F$ takes a value between $0$ and $n$. However, it does not need to be an integer. For example, the difference dimension of the difference monomial $y\s(y)\ldots\s^m(y)$ is $\frac{m}{m+1}$. This corresponds to the fact that when determining a solution to $y_iy_{i+1}\ldots y_{i+m}=0,\ i\geq 0$, in essence, every $(m+1)$-st entry of $y$ has to be zero, whereas all the other entries can be chosen freely. It is non-trivial to determine the difference dimension of a general univariate difference monomial. In fact, we will show that the difference dimension of $\s^{\alpha_1}(y)^{\beta_1}\ldots\s^{\alpha_m}(y)^{\beta_m}$ equals $1-c(\{\alpha_1,\ldots,\alpha_m\})$, where $c(\{\alpha_1,\ldots,\alpha_m\})$ denotes the \emph{covering density} of $\{\alpha_1,\ldots,\alpha_m\}\subseteq \mathbb{Z}$, a classical invariant in additive number theory.

Our notion of difference dimension can very conveniently be expressed in terms of difference algebras. In fact we assign a difference dimension to an arbitrary finitely difference generated difference algebra over a difference field. Even though this number need not be an integer, we are able to show that the difference dimension of a finitely difference generated difference algebra satisfies all the properties one might expect by way of analogy with the familiar case of finitely generated algebras over a field.

As our difference dimension need not be an integer, it is natural to ask: When is it an integer and what values can occur? We isolate several cases 
in which the difference dimension is an integer. For example, we show that the difference dimension of a finitely difference generated difference algebra is an integer if the difference algebra can be equipped with the structure of a Hopf-algebra in such a way that the Hopf-algebra structure maps commute with $\s$. We do not fully answer the question which numbers occur as difference dimensions, but we reduce this question to a purely combinatorial problem. 

In this article we are only concerned with ordinary difference equations. That is, we only consider a single endomorphism $\s$. We think it would be interesting to extend the definitions and results to the more general case of several commuting endomorphisms $\s_1,\ldots,\s_n$.

\medskip

We conclude the introduction with an overview of the article. In Section \ref{sec: counting} we make precise how to count the degrees of freedom when determining sequence solutions and we define the difference dimension of a system of algebraic difference equations based on this. In Section \ref{sec: The difference dimension} we define the difference dimension of a finitely difference generated difference algebra and show that it has several nice properties, e.g., it is compatible with base change and additive over tensor products. In Section \ref{sec: Comparison} we then compare our notion of difference dimension with two other notions in the literature: The classical one defined via the difference transcendence degree and the difference Krull dimension defined via chains of prime difference ideals. In Section~\ref{sec:Covering Density and the dimension of difference monomials} we establish the connection between the difference dimension and the covering density. Finally, in the last section we discuss which numbers occur as difference dimension.

The author is grateful to Marc Technau, Lei Fu and the anonymous referees for helpful comments and suggestions.

%

\section{Counting degrees of freedom in the ring of sequences}
\label{sec: counting}

In this section we define the difference dimension of a system of algebraic difference equations by counting the degrees of freedom encountered, when writing down a solution in the ring of sequences. The reader mainly interested in difference algebras could in principle skip this section and be content with the definition of the difference dimension of a difference algebra given in Section \ref{sec: The difference dimension}. On the other hand, the reader with a more applied background, mainly interested in solutions in the ring of sequences, might find the definition of the difference dimension given in this section much more illuminating than the more abstract approach of Section \ref{sec: The difference dimension}.

%

\subsection{Notation}

We start by recalling some basic definitions from difference algebra (\cite{Cohn:difference,Levin}) and by fixing notation that will be used throughout the text.
All rings are assumed to be commutative and unital. $\N$ denotes the natural numbers including zero.

 A \emph{difference ring}, or \emph{$\s$-ring} for short, is a ring $R$ together with a ring endomorphism $\s\colon R\to R$. A morphism between $\s$-rings $R$ and $S$ is a morphism of rings $R\to S$ such that
$$ 
\xymatrix{
	R \ar[r] \ar_\s[d] & S \ar^\s[d] \\
	R \ar[r] & S
}
$$
commutes. In this situation $S$ is also called an \emph{$R$-$\s$-algebra}. A morphism of $R$-$\s$-algebras is a morphism of $R$-algebras that is a morphism of $\s$-rings. The tensor product $S_1\otimes_R S_2$ of two $R$-$\s$-algebras is an $R$-$\s$-algebra via $\s(s_1\otimes s_2)=\s(s_1)\otimes \s(s_2)$.

 An ideal $I$ in a $\s$-ring $R$ is a \emph{$\s$-ideal} if $\s(I)\subseteq I$. In that case $R/I$ naturally inherits the structure of a $\s$-ring such that $R\to R/I$ is a morphism of $\s$-rings.
For a subset $F$ of $R$, the smallest $\s$-ideal of $R$ containing $F$ is denoted by $[F]$, so $[F]=(F,\s(F),\ldots)$.

The \emph{$\s$-polynomial ring} $R\{y\}=R\{y_1,\ldots,y_n\}$ over a $\s$-ring $R$ in the $\s$-variables $y_1,\ldots,y_n$ is the polynomial ring over $R$ in the variables  $\s^i(y_j)$ ($i\in\N, 1\leq j\leq n$)
with action of $\s$ extended from $R$ as suggested by the names of the variables. The \emph{order} $\ord(f)$ of a $\s$-polynomial $f$ is the maximal $i$ such that $\s^i(y_j)$ occurs in $f$ for some $j$. For $f\in R\{y_1,\ldots,y_n\}$, $S$ an $R$-$\s$-algebra and $a=(a_1,\ldots,a_n)\in S^n$, the expression $f(a)$ denotes the element of $S$ obtained by substituting $\s^i(y_j)$ with $\s^i(a_j)$ in $f$.

An $R$-subalgebra of an $R$-$\s$-algebra is an $R$-$\s$-subalgebra if it is stable under $\s$. Let $S$ be an $R$-$\s$-algebra and $A\subseteq S$. The smallest $R$-$\s$-subalgebra of $S$ containing $A$ is denoted with $R\{A\}$. Explicitly, $R\{A\}=R[A,\s(A),\ldots]$. If there exists a finite subset $A$ of $S$ such that $S=R\{A\}$, then $S$ is called \emph{finitely $\s$-generated} (over $R$).

 A difference ring $R$ is a \emph{$\s$-field} if $R$ is a field.
 An $R$-$\s$-algebra $S$ with $R$ and $S$ fields is a \emph{$\s$-field extension}.

  {\bf Throughout this article $k$ will denote a $\s$-field} and $\kb$ denotes an algebraic closure of $k$. (It is possible to extend $\s$ from $k$ to $\kb$ but we have no need to choose such an extension.) The Krull-dimension of a finitely generated $k$-algebra $R$ is denoted with $\dim(R)$.

Let $Y$ be a (not necessarily finite) set of variables over $\kb$ and let $F\subseteq k[Y]$. We denote the set of solutions of $F$ in $\kb^Y$ with $\V(F)$. Affine space of dimension $n$ over $\kb$ is denoted with $\A^n=\kb^n$.

\subsection{Affine sequence solutions} \label{subsec: Affine sequence solutions}

We consider the set $\kb^{\N}$ of sequences in $\kb$ as a $\s$-ring with componentwise addition and multiplication and $\s$ given by the left-shift
$\s((a_i)_{i\in\N})=(a_{i+1})_{i\in\N}$. Moreover, we consider $\kb^\N$ as a \ks-algebra via $k\to \kb^\N,\ \lambda\mapsto (\s^i(\lambda))_{i\in\N}$. For a subset $F$ of $k\{y_1,\ldots,y_n\}$ we define the set of \emph{affine sequence solutions} of $F$ as
$$\Sol^\A(F)=\big\{a\in \big(\kb^\N\big)^n|\ f(a)=0 \ \forall\  f\in F\big\}.$$
Note that $\big(\kb^\N\big)^n$ can be identified with $(\A^n)^\N$. For $$a=\big(a_{i,j}\big)_{(i,j)\in\mathbb{N}\times\{1,\ldots,n\}}\in \big(\kb^\N\big)^n= (\A^n)^\N$$ and $f\in k\{y_1,\ldots,y_n\}$ one has $f(a)=0\in \kb^\N$ if and only if $\s^i(f)(a)=0\in \kb$ for all $i\in \N$.
Thus $$\Sol^\A(F)=\Sol^\A([F])=\V([F])\subseteq (\A^n)^\N.$$
For a finite subset $T$ of $\N\times \{1,\ldots,n\}$
we set $y_T=\{\s^i(y_j)|\ (i,j)\in T\}$ and  $$\Sol^\A_T(F)=\V([F]\cap k[y_T])\subseteq \A^T,$$
where $\A^T$ is an affine space of dimension $|T|$. 
The projection maps $$(\A^n)^\N\to \A^T,\ \big(a_{i,j}\big)_{(i,j)\in\mathbb{N}\times\{1,\ldots,n\}}\mapsto \big(a_{i,j}\big)_{(i,j)\in T}$$ induce maps 
$$\pi_T\colon \Sol^\A(F)\to \Sol^\A_T(F).$$

As a first approximation to counting the degrees of freedom encountered, when writing down an affine sequence solution of $F$, one may feel tempted to say that $T$ is free with respect to $F$ if every $a_T\in \A^T$ extends to an affine sequence solution of $F$, i.e., if $\pi_T(\Sol^\A(F))=\A^T$. Or, in other words, if the initial value problem $$f(a)=0\  \forall \ f\in F,\quad \pi_T(a)=a_T$$
has a solution $a\in \big(\kb^\N\big)^n$ for all $a_T\in \A^T$. However, as illustrated in the following simple example, such a definition would be too stringent.  

\begin{ex} \label{ex: infinity}
Let us consider the affine sequence solutions of the $\s$-polynomial	$f=y_1\s(y_1)-1$ over $(k,\s)=(\mathbb{C},\id)$. A sequence $a=(a_i)_{i\in \N}\in \mathbb{C}^\N$ is a solution if and only if $a_ia_{i+1}=1$. Thus 
$$\Sol^\A(f)=\{(a_0,a_0^{-1},a_0,a_0^{-1},\ldots) \ | \ a_0\in\mathbb{C}\smallsetminus\{0\}\}.$$
Intuitively, we should count one degree of freedom here because $a_0$ can be chosen more or less arbitrarily and then all the other coefficients are determined, i.e., $T=\{0\}$ should be considered to be free. However, $a_0=0$ does not extend to an affine sequence solution. 
\end{ex}

The above example also shows that in general the projection maps $\pi_T\colon \Sol^\A(F)\to \Sol^\A_T(F)$ are not surjective. Moreover, as illustrated in the following example, the image of $\pi_T$ is in general not a constructible subset of the algebraic variety $\Sol^\A_T(F)$.


\begin{ex} \label{ex: not constructible}
	We consider the system $F=\{\s(y_1)-y_1-1,\ y_1y_2-1\}$ over $(k,\s)=(\mathbb{C},\id)$, which we may rewrite more succinctly as
	\begin{align*}
	y_{1,i+1} & =y_{1,i}+1, \\
	y_{1,i}y_{2,i} & =1.
	\end{align*}
	Clearly $y_{2,i}$ is determined by $y_{1,i}$ and $y_{1,i}$ is determined by $y_{1,i-1}$, so the only freedom available when determining an affine sequence solution of $F$ is the choice of $y_{1,0}$. But not all choices of $y_{1,0}$ yield a solution. Indeed, $y_{1,0}\in\mathbb{C}$ extends to an affine sequence solution of $F$ if and only if $y_{1,0}\neq -n$, for $n\in\N$. In other words, the image of $\pi_T\colon\Sol^\A(F)\to \Sol^{\A}_T(F)$ for $T=\{(0,1)\}$ is $\mathbb{C}\smallsetminus\{-n|\ n\in \N\}$, which is not a constructible subset of $\mathbb{C}$.
\end{ex}

Even worse, as explained in the following example, the image of $\pi_T\colon \Sol^\A(F)\to \Sol^\A_T(F)$ need not be Zariski dense in $\Sol^\A_T(F)$. 
We will see in Subsection \ref{subsec: A characterization of free sets in terms of affine} that such a pathology cannot happen if $k$ is uncountable. 

\begin{ex} \label{ex: not Zariski dense}
	We will not explicitly write down such an example but rather give an abstract argument why such an example exists. Using ideas and methods from \cite{PogudinScanlonWibmer:SolvingDifferenceEquationsInSequences} it would in principle be possible to write down an explicit example but that would be extremely tedious.
	
	It is shown in \cite[Theorem 3.2]{PogudinScanlonWibmer:SolvingDifferenceEquationsInSequences}
	that there exists an integer $n\geq 1$, a finite set $F\subseteq k\{y_1,\ldots,y_n\}$ of $\s$-polynomials over $(k,\s)=(\overline{\mathbb{Q}},\id)$ and a $\s$-polynomial $g\in k\{y_1,\ldots,y_n\}$ such that $g$ vanishes on every element of $\Sol^\A(F)$ but $g\notin \sqrt{[F]}$. Let $T\subseteq \N\times\{1,\ldots,n\}$ be such that $g\in k[y_T]$. We claim that the image of $\pi_T\colon \Sol^\A(F)\to \Sol^\A_T(F)$ is not Zariski dense in $\Sol^\A_T(F)$. As $g$ vanishes on $\Sol^\A(F)$, we see that the image of $\pi_T$ is contained in $\V(g)\subseteq \A^T$. On the other hand, as $g\notin\sqrt{[F]}$, we also have $g\notin\sqrt{[F]\cap k[y_T]}$. So $g$ does not vanish on $\Sol^\A_T(F)$. We conclude $$\pi_T(\Sol^\A(F))\subseteq \V(g)\nsubseteq \Sol^\A_{T}(F).$$
	Thus $\pi_T(\Sol^\A(F))$ is not Zariski dense in $\Sol^\A_T(F)$.
	
\end{ex}

\subsection{Projective sequence solutions}

We have seen above that for a finite subset $T$ of $\N\times\{1,\ldots,n\}$, the set of elements of $\Sol^\A_T(F)$ that extends to an affine sequence solution of $F$, is in general not Zariski dense and not constructible. In this section we show that the situation can be improved by allowing projective sequence solutions instead of just affine sequence solutions: The set of all elements of $\Sol^\A_T(F)$ that extend to a projective sequence solution of $F$ contains an open Zariski dense subset of $\Sol^\A_T(F)$ (Lemma \ref{lemma: U extends}).

We write $\P^n=\P^n(\kb)$ for $n$-dimensional projective space over $\kb$.

\begin{rem}[Multiprojective space] \label{rem: multiprojective space}
	Let $n,r\geq 1$. The closed subsets of the algebraic $\kb$-variety $\P^{n}\times\ldots\times\P^{n}=(\P^n)^r$ are exactly the solution sets of systems of multihomogeneous polynomials 
(cf. \cite[Chapter 1, Section 5.1]{Shafarevich:BasicAlgebraicGeometry1}).	Here a polynomial $f\in \kb[y_{1,0},\ldots,y_{1,n},\ldots,y_{r,0},\ldots,y_{r,n}]$ is called \emph{multihomogeneous} of \emph{multidegree} $(d_1,\ldots,d_r)$ if $f$ is homogeneous of degree $d_i$ in the variables $y_{i,0},\ldots,y_{i,n}$ for $i=1,\ldots,r$. For a set $F$ of multihomogeneous polynomials we write $\V^h(F)$ for the closed subset of $(\P^{n})^r$ defined by $F$. We consider $\A^{n}\times\ldots\times \A^{n}=(\A^n)^r=\A^{nr}$ as an open subset of $(\P^{n})^r$ via the embedding
$$\big((a_{1,1},\ldots,a_{1,n}),\ldots,(a_{r,1},\ldots,a_{r,n})\big)\mapsto \big((1:a_{1,1}:\ldots:a_{1,n}),\ldots,(1:a_{r,1}:\ldots:a_{r,n})\big).$$ Then $(\P^{n})^r$ is the union of  $(\A^n)^r$ and the points at infinity $\V^h(y_{1,0}\ldots y_{r,0})$.

Let $f\in \kb[y_{1,1},\ldots,y_{1,n},\ldots,y_{r,1},\ldots,y_{r,n}]$  and for $i=1,\ldots,r$ let $d_i$ denote the degree of $f$ in $y_{i,1},\ldots,y_{i,n}$. The multihomogenization $f^h\in \kb[y_{1,0},\ldots,y_{1,n},\ldots,y_{r,0},\ldots,y_{r,n}]$ of $f$ is defined as $$f^h=y_{1,0}^{d_1}\ldots y_{r,0}^{d_r}f\big(\tfrac{y_{1,1}}{y_{1,0}},\ldots,\tfrac{y_{1,n}}{y_{1,0}},\ldots,\tfrac{y_{r,1}}{y_{r,0}},\ldots,\tfrac{y_{r,n}}{y_{r,0}}\big).$$
For a closed subset $X$ of $(\A^{n})^r$, the closure $\overline{X}$ of $X$ in $(\P^{n})^r$ equals $\V^h(\mathbb{I}(X)^h)$, where $\mathbb{I}(X)\subseteq \kb[y_{1,1},\ldots,y_{1,n},\ldots,y_{r,1},\ldots,y_{r,n}]$ is the defining ideal of $X$ and $\mathbb{I}(X)^h=\{f^h|\ f\in\mathbb{I}(X)\}$.
\end{rem}

Let $\N[\s]$ denote the set of polynomials in the variable $\s$ with natural number coefficients. We consider $\N[\s]$ as an abelian monoid under addition. The $\s$-polynomial ring $k\{y_0,\ldots,y_n\}$ has a natural $\N[\s]$-grading that we shall now describe. We define the \emph{$\s$-degree} of a $\s$-monomial as 

$$\sdeg\left(\prod_{i=0}^r\prod_{j=0}^n\s^i(y_j)^{\alpha_{i,j}}\right)=\sum_{i=0}^r\left(\sum_{j=0}^n\alpha_{i,j}\right)\s^i.$$
A $\s$-polynomial $f\in k\{y_0,\ldots,y_n\}$ is \emph{$\s$-homogeneous} of $\s$-degree $\sdeg(f)=d\in\N[\s]$ if all $\s$-monomials of $f$ have $\s$-degree $d$. Thus $f$ is $\s$-homogeneous if and only if $f$ is homogeneous in $\s^i(y_0),\ldots,\s^i(y_n)$ for every $i\in\N$. 
Note that every $\s$-polynomial $f\in k\{y_0,\ldots,y_n\}$ can uniquely be written as a sum of $\s$-homogeneous $\s$-polynomials.

%
%
%
Let $f\in k\{y_1,\ldots,y_n\}$ be of order $r$, (so $f=f(y_1,\ldots,y_n,\ldots,\s^r(y_1),\ldots,\s^r(y_n))$ and for $i=0,\ldots,r$, let $d_i$ denote the degree of $f$ in the variables $\s^i(y_1),\ldots,\s^i(y_n)$. The \emph{$\s$-homogenization} $f^h\in k\{y_0,\ldots,y_n\}$ of $f$ is defined as
$$f^h=y_0^{d_0}\ldots\s^r(y_0)^{d_r}f\big(\tfrac{y_1}{y_0},\ldots,\tfrac{y_n}{y_0},\ldots,\tfrac{\s^r(y_1)}{\s^r(y_0)},\ldots,\tfrac{\s^r(y_n)}{\s^r(y_0)}\big).$$ 
For a subset $F$ of $k\{y_1,\ldots,y_n\}$ we set $F^h=\{f^h|\ f\in F\}$.
\begin{ex}
	We have $(y_1\s(y_1)-1)^h=y_1\s(y_1)-y_0\s(y_0)$
\end{ex}

For $i\in \N$, the grading on $k[y_0,\ldots,y_n,\ldots,\s^i(y_0),\ldots,\s^i(y_n)]\subseteq k\{y_0,\ldots,y_n\}$ induced by the $\N[\s]$-grading on $k\{y_0,\ldots,y_n\}$, exactly corresponds to the multidegree as in Remark~\ref{rem: multiprojective space}. Thus, a set of $\s$-homogeneous $\s$-polynomials of $k\{y_0,\ldots,y_n\}$ of order at most $i$, defines a closed subset of $(\P^n)^{i+1}$.

We note that if $f\in k\{y_0,\ldots,y_n\}$ is $\s$-homogeneous of degree $d=d_r\s^r+\ldots+d_0$ and $a=(a_0,\ldots,a_n)\in k^{n+1}$, then
$f(\lambda a)=\lambda^{d_0}\ldots\s^r(\lambda)^{d_r}f(a)$ for all $\lambda\in k$. Thus the expression $f(b)=0$ is well-defined for $b\in\P^n(k)$. On the other hand, we can also consider $f$ as a multihomogeneous polynomial in the variables $\s^i(y_j)$ (rather than as a difference polynomial) and in this context the expression $f(a)=0$ is well-defined for any $a\in \big(\P^n\big)^\N$.

\medskip

Let, as in Subsection \ref{subsec: Affine sequence solutions}, $F$ be a subset of $k\{y_1,\ldots,y_n\}$.  The set of \emph{projective sequence solutions}                 
	 of $F$ is
$$\Sol^\P(F)=\big\{a\in \big(\P^n)^\N|\ f(a)=0 \ \forall \ f\in [F]^h\big\}.$$
For $i\in\N$ let $T_i=\{0,\ldots,i\}\times \{1,\ldots,n\}$ and
$$\Sol^\P_i(F)=\V^h(([F]\cap k[y_{T_i}])^h)\subseteq(\P^n)^{i+1}.$$
Thus, $\Sol_i^\P(F)$ is the closure of $\Sol_{T_i}^\A(F)$ in $(\P^n)^{i+1}$ (Remark \ref{rem: multiprojective space}). Since $[F]\cap k[y_{T_i}]\subseteq [F]\cap k[y_{T_{i+1}}]$, the maps $(\P^n)^{i+2}\to (\P^n)^{i+1},\ (b_0,\ldots,b_{i+1})\mapsto (b_0,\ldots,b_i)$ induce maps 
$$\pi_{i+1,i}\colon \Sol^\P_{i+1}(F)\to \Sol^\P_i(F).$$

The standard embedding $\A^n\hookrightarrow \P^n,\ (a_1,\ldots,a_n)\mapsto(1:a_1:\ldots:a_n)$ yields an inclusion $(\A^n)^\N\subseteq(\P^n)^\N$, which, in turn, induces an inclusion $\Sol^\A(F)\subseteq\Sol^\P(F)$.
 Also, the projection maps $$(\P^n)^\N\to (\P^n)^{i+1},\ (b_0,b_1,\ldots)\mapsto (b_0,\ldots,b_i)$$ induce maps 
$\pi_i\colon \Sol^\P(F)\to \Sol_i^\P(F)$. We have commutative diagrams 
\begin{align} \label{eq: commuate for Sol}
\xymatrix{
\Sol^\A(F) \ar@{^(->}[r] \ar_{\pi_{T_i}}[d] & \Sol^\P(F) \ar^{\pi_i}[d] \\
\Sol^\A_{T_i}(F) \ar@{^(->}[r] & \Sol_i^\P(F)	
}
\text{\quad  \quad and \quad \quad   }
\xymatrix{ \Sol^\A_{T_{i+1}}(F) \ar[d] \ar@{^(->}[r] &  	\Sol^\P_{i+1}(F) \ar^-{\pi_{i+1,{i}}}[d] \\
\Sol^\A_{T_i}(F)   \ar@{^(->}[r] & \Sol^\P_{i}(F).
}
\end{align}
However, note that for an arbitrary finite subset $T$ of $\N\times\{1,\ldots,n\}$, there may not be a projective version of the map $\pi_T\colon \Sol^\A(F)\to \Sol^\A_{T}(F)$, because there are no projective analogs of the coordinate projections on $\A^n$.

%

\begin{lemma} \label{lemma: projections surjective}
	The projection maps $\pi_i\colon\Sol^\P(F)\to \Sol^\P_i(F)$ are surjective.
\end{lemma}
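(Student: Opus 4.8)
The plan is to identify $\Sol^\P(F)$ with the inverse limit $\varprojlim_i\Sol^\P_i(F)$ along the transition maps $\pi_{i+1,i}$, to prove that each $\pi_{i+1,i}$ is surjective, and then to lift a given point of $\Sol^\P_i(F)$ through the tower one coordinate block at a time.

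For the first point I would check that for $a=(a_0,a_1,\ldots)\in(\P^n)^\N$ one has $a\in\Sol^\P(F)$ if and only if $(a_0,\ldots,a_j)\in\Sol^\P_j(F)$ for every $j\in\N$. The key observation is that every element of $[F]^h$ has the form $g^h$ with $g\in[F]$, and that if $\ord(g)=r$ then $g^h$ is a $\s$-homogeneous (equivalently, multihomogeneous) polynomial involving only the variables $\s^l(y_m)$ with $l\le r$; hence $g^h(a)$ depends only on $(a_0,\ldots,a_r)$. Since moreover $g\in[F]\cap k[y_{T_r}]$ and therefore $g^h\in([F]\cap k[y_{T_r}])^h$, the polynomial $g^h$ is among the defining equations of $\Sol^\P_r(F)=\V^h\big(([F]\cap k[y_{T_r}])^h\big)$. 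Consequently, if all truncations of $a$ lie in the corresponding $\Sol^\P_j(F)$, then $g^h(a)=g^h(a_0,\ldots,a_r)=0$ for all $g\in[F]$, i.e.\ $a\in\Sol^\P(F)$; the converse is the same computation read in reverse.

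The heart of the argument is the surjectivity of $\pi_{i+1,i}\colon\Sol^\P_{i+1}(F)\to\Sol^\P_i(F)$, and I expect this to be the main obstacle. Here I would use that $\Sol^\P_i(F)$ is the closure in $(\P^n)^{i+1}$ of $\Sol^\A_{T_i}(F)$ (Remark~\ref{rem: multiprojective space}), and that, by the right-hand square in the diagrams \eqref{eq: commuate for Sol}, $\pi_{i+1,i}$ restricts on $\Sol^\A_{T_{i+1}}(F)$ to the coordinate projection $\Sol^\A_{T_{i+1}}(F)\to\Sol^\A_{T_i}(F)$. This projection has Zariski dense image, because a polynomial $g\in k[y_{T_i}]$ vanishing on $\Sol^\A_{T_{i+1}}(F)$ lies in $\sqrt{[F]\cap k[y_{T_{i+1}}]}\cap k[y_{T_i}]=\sqrt{[F]\cap k[y_{T_i}]}$ and hence vanishes on $\Sol^\A_{T_i}(F)$. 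On the other hand $\pi_{i+1,i}$ is induced by the projection $(\P^n)^{i+2}\to(\P^n)^{i+1}$, which is a closed map since $\P^n$ is complete; this is exactly where completeness of projective space — the main theorem of elimination theory — enters, and it is precisely the passage from $\A^n$ to $\P^n$ that rescues the statement, the affine analogue failing by Examples~\ref{ex: not constructible} and~\ref{ex: not Zariski dense}. Hence $\pi_{i+1,i}\big(\Sol^\P_{i+1}(F)\big)$ is closed in $(\P^n)^{i+1}$; by continuity it is contained in $\overline{\Sol^\A_{T_i}(F)}=\Sol^\P_i(F)$, while it contains the dense subset $\pi_{i+1,i}\big(\Sol^\A_{T_{i+1}}(F)\big)$ of $\Sol^\P_i(F)$. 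Therefore it equals $\Sol^\P_i(F)$, as desired.

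Finally, given $b\in\Sol^\P_i(F)$, I would construct a preimage greedily. Using surjectivity of the transition maps, build recursively a sequence $b^{(i)}=b,\,b^{(i+1)},\,b^{(i+2)},\ldots$ with $b^{(j)}\in\Sol^\P_j(F)$ and $\pi_{j+1,j}(b^{(j+1)})=b^{(j)}$. Writing $b^{(j)}=(b^{(j)}_0,\ldots,b^{(j)}_j)\in(\P^n)^{j+1}$, compatibility means $b^{(j+1)}_l=b^{(j)}_l$ for $l\le j$, so the $b^{(j)}$ glue to a well-defined point $a=(a_0,a_1,\ldots)\in(\P^n)^\N$ whose truncation $(a_0,\ldots,a_j)$ equals $b^{(j)}\in\Sol^\P_j(F)$ for $j\ge i$ and equals $\pi_{i,j}(b)\in\Sol^\P_j(F)$ for $j<i$. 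By the inverse limit description from the second paragraph, $a\in\Sol^\P(F)$, and $\pi_i(a)=b^{(i)}=b$. This last step needs nothing beyond honest surjectivity of the $\pi_{j+1,j}$: since each new block is chosen as a genuine preimage of the previous one, no reconciliation of incompatible finite approximations is ever required, so the usual compactness subtleties of inverse limits are avoided.
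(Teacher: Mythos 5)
Your proof is correct and follows essentially the same route as the paper: identify $\Sol^\P(F)$ with the inverse limit of the $\Sol^\P_i(F)$, reduce to surjectivity of the transition maps $\pi_{i+1,i}$, and deduce that from the density of the image of the affine projection $\Sol^\A_{T_{i+1}}(F)\to\Sol^\A_{T_i}(F)$ together with closedness of the image, which comes from completeness of (closed subvarieties of) products of projective spaces. The only cosmetic difference is that you check the density by a direct Nullstellensatz computation, whereas the paper invokes dominance of the morphism of affine schemes attached to the inclusion of coordinate rings $k[y_{T_i}]/(k[y_{T_i}]\cap[F])\hookrightarrow k[y_{T_{i+1}}]/(k[y_{T_{i+1}}]\cap[F])$.
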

\begin{proof}
	Note that $b\in(\P^n)^\N$ lies in $\Sol^\P(F)$ if and only if $\pi_i(b)\in (\P^n)^{i+1}$ lies in $\Sol^\P_i(F)$ for every $i\in\N$.
	In other words, $\Sol^\P(F)$ can be identified with the inverse limit of the $\Sol^\P_i(F)$'s. It thus suffices to show that the maps $\pi_{i+1,i}\colon  \Sol^\P_{i+1}(F)\to \Sol^\P_{i}(F)$, 
	are surjective. 
The inclusion $$k[y_{T_i}]/(k[y_{T_i}]\cap[F])\hookrightarrow k[y_{T_{i+1}}]/(k[y_{T_{i+1}}]\cap [F])$$ of finitely generated $k$-algebras, corresponds to a dominant morphism of affine $k$-schemes.
	Therefore, also the morphism $\Sol^\A_{T_{i+1}}(F)\to \Sol^\A_{T_i}(F)$ of affine $\kb$-varieties is dominant. 
	As $\Sol^\P_{i}(F)$ is the closure of $\Sol^\A_{T_{i}}(F)$, this and the commutativity of (\ref{eq: commuate for Sol}), implies that also $\pi_{i+1,i}$ is dominant.
Projective space is complete and so are products and closed subvarieties of complete varieties. Thus $\Sol^\P_{i+1}(F)$ is complete. Since the image of a complete variety under a morphism is closed, it follows that $\pi_{i+1,i}$ has a dense and closed image. Therefore $\pi_{i+1,i}$ is surjective.
\end{proof}

As discussed in Section \ref{subsec: Affine sequence solutions}, for a finite subset $T$ of $\N\times\{1,\ldots,n\}$, the set of elements of $\A^T$ that extend to an affine sequence solution of $F$ is not so well-behaved. In particular, it need not contain a non-empty open subset of $\Sol^\A_T(F)$. To remedy this  situation (see Lemma \ref{lemma: U extends} below), we consider the possibility of extending elements of $\A^T$ to projective sequence solutions of $F$.

\begin{defi} \label{defi: extends}
	Let $F\subseteq k\{y_1,\ldots,y_n\}$ and let $T$ be a finite subset of $\N\times\{1,\ldots,n\}$. An element $a=(a_{i,j})_{(i,j)\in T}\in \A^T$ \emph{extends to a projective sequence solution} of $F$, if there exists  $b=(b_{i,0}:\ldots: b_{i,n})_{i\in\N}\in \Sol^\P(F)\subseteq (\P^n)^\N$ such that $a_{i,j}=b_{i,j}$ for all $(i,j)\in T$ and $b_{i0}=1$ for all $i\in \N$ with $(i,j)\in T$ for some $j$. 
\end{defi}

Clearly, if $a\in \A^T$ extends to an affine sequence solution of $F$, then $a$ also extends to a projective sequence solution of $F$. On the other hand, an $a\in\A^T$ that extends to a projective sequence solution of $F$ need not extend to an affine sequence solution of $F$, since projective sequence solutions allow the possibility of $b_{i,0}=0$ as long as $(i,j)\notin T$ for all $j\in\{1,\ldots,n\}$.

\begin{lemma} \label{lemma: extends implies solution}
	If $a\in \A^T$ extends to a projective sequence solution of $F$, then $a\in\Sol^\A_T(F)$.
\end{lemma}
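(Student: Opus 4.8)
The plan is to chase the relevant definitions and reduce the claim to the following fact: if $b=(b_{i,0}:\ldots:b_{i,n})_{i\in\N}\in\Sol^\P(F)$ is a projective sequence solution with $b_{i,0}=1$ for all $i$ in the (finite) set $I=\{i\mid (i,j)\in T\text{ for some }j\}$, then the affine point $a=(a_{i,j})_{(i,j)\in T}$ with $a_{i,j}=b_{i,j}$ lies in $\Sol^\A_T(F)=\V([F]\cap k[y_T])$. So I would pick an arbitrary $f\in [F]\cap k[y_T]$ and show $f(a)=0$; since $a$ is determined by $T$, it suffices to show that $f$ vanishes at the $T$-coordinates of $b$.

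First I would reduce to $\s$-homogeneous pieces. Write $f=\sum_d f_d$ as its (finite) decomposition into $\s$-homogeneous components; since $f\in k[y_T]\subseteq k[y_{T_i}]$ for $i$ large, each $f_d\in k[y_{T_i}]$ is multihomogeneous in the blocks $\s^i(y_0),\ldots,\s^i(y_n)$ — but $f$ involves no $y_0$-variables, so each $f_d$ is already its own $\s$-homogenization $f_d^h$ up to the $y_0$-factor: more precisely, $f_d^h = y_{1,0}^{e_1}\cdots\,\s^r(y_0)^{e_r} f_d$ for suitable exponents determined by $d$. The key point is that $f\in[F]$ implies each $f_d\in[F]$ (difference ideals are $\N[\s]$-graded here since $F$ need not be homogeneous — more carefully, $[F]$ itself is not graded, so instead I would argue: $f^h\in[F]^h$ and $b\in\Sol^\P(F)$ gives $f^h(b)=0$, then relate $f^h(b)$ to $f(a)$).

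So the cleaner route, which I would actually carry out: by definition $b\in\Sol^\P(F)$ means $g(b)=0$ for all $g\in[F]^h$. Given $f\in[F]\cap k[y_T]$, in particular $f\in[F]$, so $f^h\in[F]^h$ and hence $f^h(b)=0$. Now $f^h$ is the $\s$-homogenization, $f^h=y_0^{d_0}\cdots\s^r(y_0)^{d_r}f(y_1/y_0,\ldots,\s^r(y_n)/\s^r(y_0))$. Evaluating at $b$: the variables $\s^i(y_j)$ occurring in $f$ all have $(i,j)\in T$, so $i\in I$, so $b_{i,0}=1$; therefore every factor $\s^i(y_0)$ appearing in $f^h$ is evaluated to $1$ (note $d_i=0$ whenever no variable of the block $i$ occurs in $f$, so there is no issue with blocks outside $I$). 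Thus $f^h(b)=f(b_{i,j}\mid (i,j)\in T)=f(a)$, and since $f^h(b)=0$ we get $f(a)=0$. As $f$ was arbitrary in $[F]\cap k[y_T]$, this shows $a\in\V([F]\cap k[y_T])=\Sol^\A_T(F)$.

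The only mildly delicate point — the one I would be careful about — is the bookkeeping of which $\s^i(y_0)$-powers appear in $f^h$ and making sure they are all evaluated at coordinates where $b_{i,0}=1$: this is exactly guaranteed by the hypothesis in Definition \ref{defi: extends} that $b_{i,0}=1$ for all $i$ with $(i,j)\in T$ for some $j$, together with the observation that $f^h$ only introduces $\s^i(y_0)$ with a positive power when $f$ genuinely involves a variable from block $i$. Everything else is a routine unwinding of definitions, so I do not expect a real obstacle here; this lemma is essentially the statement that projective extension is a genuine weakening of affine extension only "at infinity," i.e., at coordinates outside $T$.
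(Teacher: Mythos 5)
Your ``cleaner route'' is exactly the paper's argument: take $f\in[F]\cap k[y_T]$, use $f^h\in[F]^h$ and $f^h(b)=0$, observe that $f^h$ only involves $\s^i(y_0)$ for $i$ in the set $I$ of rows met by $T$, where $b_{i,0}=1$, and conclude $f^h(b)=f(a)=0$; the bookkeeping you flag is handled the same way in the paper. The initial detour through $\s$-homogeneous components is indeed a dead end (as you note, $[F]$ need not be graded), but since you discard it and carry out the correct argument, the proof is fine.
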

\begin{proof}
Assume that $a=(a_{i,j})_{(i,j)\in T}\in \A^T$ extends to a projective sequence solution of $F$ and let $f\in [F]\cap k[y_T]$. Moreover, let $b=(b_{i,0}:\ldots: b_{i,n})_{i\in\N}\in \Sol^\P(F)$ be as in Definition \ref{defi: extends}. Let $I$ be the smallest subset of $\N$ such that $T\subseteq I\times\{1,\ldots,n\}$, i.e., $I=\{i\in \N|\ \exists \ j\in\{1,\ldots,n\} \text{ such that } (i,j)\in T\}$.
Since every element of $[F]^h$ vanishes on $b$, we see that $f^h\in k[\s^i(y_j)|\ (i,j)\in I\times \{0,\ldots,n\}]$ vanishes on $((b_{i,0}:\ldots :b_{i,n}))_{i\in I}\in (\P^n)^{|I|}$. Since $f\in k[y_T]$, the polynomial $f^h$ only involves the variables $\s^i(y_0), (i\in I)$ and $\s^i(y_j), ((i,j)\in T)$. Since $a_{i,j}=b_{i,j}$ for $(i,j)\in T$ and $b_{i,0}=1$ for $i\in I$, we see that $f^h(b)=0$ implies $f(a)=0$. So  $a\in\Sol^\A_T(F)$.
\end{proof}

\begin{lemma} \label{lemma: U extends}
	Let $F\subseteq k\{y_1,\ldots,y_n\}$ and let $T$ be a finite subset of $\N\times\{1,\ldots,n\}$. Then there exists an open Zariski dense subset $U$ of $\Sol^\A_T(F)$ such that every $a\in U$ extends to a projective sequence solution of $F$. 
\end{lemma}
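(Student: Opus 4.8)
The plan is to transfer the problem from $T$ to the larger index set $T_i$ and then invoke the surjectivity of the projections $\pi_i\colon\Sol^\P(F)\to\Sol^\P_i(F)$ from Lemma \ref{lemma: projections surjective}. First I would fix $i\in\N$ large enough that $T\subseteq T_i=\{0,\ldots,i\}\times\{1,\ldots,n\}$, and set $I=\{l\in\N\mid \exists\, j \text{ with } (l,j)\in T\}$, so that $I\subseteq\{0,\ldots,i\}$. The coordinate projection $\A^{T_i}\to\A^T$, which on coordinate rings is the inclusion $k[y_T]\hookrightarrow k[y_{T_i}]$, restricts to a morphism $q\colon\Sol^\A_{T_i}(F)\to\Sol^\A_T(F)$: since $[F]\cap k[y_T]$ is exactly the contraction of $[F]\cap k[y_{T_i}]$ to $k[y_T]$, the map $k[y_T]/([F]\cap k[y_T])\hookrightarrow k[y_{T_i}]/([F]\cap k[y_{T_i}])$ is injective, and then — exactly as in the proof of Lemma \ref{lemma: projections surjective} — $q$ is a well-defined \emph{dominant} morphism of affine $\kb$-varieties.

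Next I would apply Chevalley's theorem: the image $q(\Sol^\A_{T_i}(F))$ is a constructible subset of $\Sol^\A_T(F)$, and it is Zariski dense because $q$ is dominant. A Zariski dense constructible subset of a variety contains a Zariski dense open subset (for an irreducible variety this is immediate; in general one argues on each irreducible component and then removes its intersections with the other components). Hence there is an open Zariski dense $U\subseteq\Sol^\A_T(F)$ with $U\subseteq q(\Sol^\A_{T_i}(F))$.

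It then remains to check that every $a\in U$ extends to a projective sequence solution of $F$. I would pick $c=(c_{l,j})_{(l,j)\in T_i}\in\Sol^\A_{T_i}(F)$ with $q(c)=a$, so $c_{l,j}=a_{l,j}$ for $(l,j)\in T$. Under the inclusion $\Sol^\A_{T_i}(F)\hookrightarrow\Sol^\P_i(F)$ of (\ref{eq: commuate for Sol}), the point $c$ corresponds to $\big((1:c_{0,1}:\ldots:c_{0,n}),\ldots,(1:c_{i,1}:\ldots:c_{i,n})\big)\in\Sol^\P_i(F)$. By Lemma \ref{lemma: projections surjective} the map $\pi_i$ is surjective, so there is $b=(b_{l,0}:\ldots:b_{l,n})_{l\in\N}\in\Sol^\P(F)$ with $b_l=(1:c_{l,1}:\ldots:c_{l,n})$ for $l=0,\ldots,i$. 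In particular $b_{l,0}=1$ for every $l\leq i$, hence for every $l\in I$, and $b_{l,j}=c_{l,j}=a_{l,j}$ for every $(l,j)\in T$; so $b$ witnesses that $a$ extends to a projective sequence solution of $F$ in the sense of Definition \ref{defi: extends}.

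The reason for routing the argument through $\Sol^\A_{T_i}(F)$ with $i$ chosen so that $T_i$ contains every index occurring in $T$, rather than through $\Sol^\A_T(F)$ directly, is precisely to secure the normalization $b_{l,0}=1$ for the relevant $l$: lifting (via $\pi_i$) a point that is ``fully affine'' in the first $i+1$ projective factors yields a projective sequence solution that remains affine in those factors. I expect the only delicate points to be this bookkeeping with affine charts and the (routine, but worth stating explicitly) passage from ``dense constructible'' to ``contains a dense open'' when $\Sol^\A_T(F)$ is reducible; the rest is a direct combination of Chevalley's theorem with Lemma \ref{lemma: projections surjective}.
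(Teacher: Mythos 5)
Your proof is correct and follows essentially the same route as the paper's: pass to $T_i\supseteq T$, use dominance of $\Sol^\A_{T_i}(F)\to\Sol^\A_T(F)$ plus Chevalley's theorem to get a dense open $U$ inside the image, and then lift a preimage through the embedding into $\Sol^\P_i(F)$ and the surjection $\pi_i$ of Lemma \ref{lemma: projections surjective}. The extra details you supply (dense constructible contains a dense open, and the explicit check that $b_{l,0}=1$ for $l\leq i$) are accurate and only make explicit what the paper leaves implicit.
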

\begin{proof}
	Let $i\in\N$ be such that $T\subseteq T_i=\{0,\ldots,i\}\times\{1,\ldots,n\}$. The inclusion $$k[y_{T}]/(k[y_{T}]\cap[F])\hookrightarrow k[y_{T_i}]/(k[y_{T_i}]\cap [F])$$ of finitely generated $k$-algebras, corresponds to dominant morphism of affine $k$-schemes. Therefore, also the morphism $\pi_{T_i,T}\colon \Sol^\A_{T_i}(F)\to \Sol^\A_{T}(F)$ of affine $\kb$-varieties is dominant. By Chevalley's theorem (see e.g., \cite[Theorem 2.2.11]{Geck:AnIntroductionToAlgebraicGeometryAndAlgebraicGroups}) the image of a morphism of varieties is constructible. So the image of $\pi_{T_i,T}$ is a Zariski dense, constructible subset of $\Sol^\A_{T}(F)$. It therefore contains a subset $U$ that is open and Zariski dense in $\Sol^\A_{T}(F)$.
	Thus, every $a\in U$ extends to some $\widetilde{a}\in\Sol^\A_{T_i}(F)$. Via the embedding $\Sol^\A_{T_i}(F)\to \Sol^\P_i(F)$ we obtain an element $\widetilde{b}\in \Sol^\P_{i}(F)$ from $\widetilde{a}\in\Sol^\A_{T_i}(F)$. By Lemma \ref{lemma: projections surjective}, there exists a $b\in \Sol^\P(F)$ mapping to $\widetilde{b}\in \Sol^\P_{i}(F)$. This $b$ has the required property of Definition~\ref{defi: extends}.
\end{proof}

\subsection{Free sets and difference dimension}

We are now prepared to specify precisely how to count the degrees of freedom when determining sequence solutions.

\begin{prop} \label{prop: characterize free}
	Let $F\subseteq k\{y_1,\ldots,y_n\}$. For a finite subset $T$ of $\N\times\{1,\ldots,n\}$ the following conditions are equivalent:
	\begin{enumerate}
		\item There exists a Zariski dense open subset $U$ of $\A^T$ such that every $a\in U$ extends to a projective sequence solution of $F$.
		\item $\Sol^\A_T(F)=\A^T$.
		\item $k[y_T]\cap[F]=\{0\}$.	
		\item The image of $y_T$ in $k\{y_1,\ldots,y_n\}/[F]$ is algebraically independent over $k$.
	\end{enumerate}
\end{prop}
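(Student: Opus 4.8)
The plan is to prove the chain of implications $(ii)\Leftrightarrow(iii)\Leftrightarrow(iv)$ first, since these are essentially translations between geometry, ideals, and transcendence degree, and then handle $(i)$ by pairing it with Lemma~\ref{lemma: U extends} and Lemma~\ref{lemma: extends implies solution}. For $(iii)\Leftrightarrow(iv)$: by definition $\Sol^\A_T(F)=\V([F]\cap k[y_T])\subseteq\A^T$, and the coordinate ring of this variety is $k[y_T]/([F]\cap k[y_T])$, which embeds into $k\{y_1,\ldots,y_n\}/[F]$. The image of $y_T$ in the quotient is algebraically independent over $k$ exactly when the kernel of $k[y_T]\to k\{y_1,\ldots,y_n\}/[F]$ is zero, and that kernel is precisely $k[y_T]\cap[F]$. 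This is immediate. For $(ii)\Leftrightarrow(iii)$: $\Sol^\A_T(F)=\A^T$ means the radical ideal $\mathbb{I}(\Sol^\A_T(F))=\sqrt{[F]\cap k[y_T]}$ equals $(0)$ in $k[y_T]$ (here I use that $\kb$ is algebraically closed so the Nullstellensatz applies over $\A^T=\kb^T$); since $k[y_T]$ is a polynomial ring over a field, hence reduced, $\sqrt{[F]\cap k[y_T]}=(0)$ iff $[F]\cap k[y_T]=(0)$. So $(ii)\Leftrightarrow(iii)$.

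It remains to incorporate $(i)$. The implication $(ii)\Rightarrow(i)$ is exactly Lemma~\ref{lemma: U extends} applied with $T$: if $\Sol^\A_T(F)=\A^T$, then the open Zariski dense subset $U\subseteq\Sol^\A_T(F)=\A^T$ furnished by that lemma consists of elements extending to projective sequence solutions, which is precisely what $(i)$ asserts. For the converse $(i)\Rightarrow(ii)$: suppose $U\subseteq\A^T$ is Zariski dense and open and every $a\in U$ extends to a projective sequence solution of $F$. By Lemma~\ref{lemma: extends implies solution}, each such $a$ lies in $\Sol^\A_T(F)$, so $U\subseteq\Sol^\A_T(F)\subseteq\A^T$. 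Since $\Sol^\A_T(F)$ is Zariski closed in $\A^T$ and contains the Zariski dense set $U$, we conclude $\Sol^\A_T(F)=\A^T$, which is $(ii)$.

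The main obstacle, such as it is, is not in any single implication — each is a short argument — but in correctly marshalling the Nullstellensatz step in $(ii)\Leftrightarrow(iii)$: one must be careful that $\Sol^\A_T(F)$ is taken inside $\A^T=\kb^T$ with $\kb$ algebraically closed, so that the variety is "full" exactly when its vanishing ideal is trivial, and that $k[y_T]$ being reduced lets us drop the radical. The role of $(i)$ is really to connect the algebraic characterizations $(ii)$–$(iv)$ with the motivating notion of "degrees of freedom in a sequence solution," and the content there has already been extracted into Lemmas~\ref{lemma: U extends} and \ref{lemma: extends implies solution}; the proposition then just records that these two lemmas are sharp enough to make $(i)$ equivalent to $(ii)$. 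I would present the proof as the cycle $(i)\Rightarrow(ii)\Rightarrow(iii)\Rightarrow(iv)\Rightarrow(iii)$ together with $(ii)\Rightarrow(i)$, or equivalently organize it as $(ii)\Leftrightarrow(iii)\Leftrightarrow(iv)$ plus $(i)\Leftrightarrow(ii)$, whichever reads more cleanly.
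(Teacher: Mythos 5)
Your proof is correct and follows essentially the same route as the paper: (i)$\Leftrightarrow$(ii) via Lemmas~\ref{lemma: extends implies solution} and \ref{lemma: U extends} together with closedness of $\Sol^\A_T(F)$, and (ii)$\Leftrightarrow$(iii)$\Leftrightarrow$(iv) by unwinding the definitions. The only cosmetic difference is that you invoke the Nullstellensatz (with a slight imprecision, since the vanishing ideal and the radical live in $\kb[y_T]$, not $k[y_T]$) for (ii)$\Rightarrow$(iii), where it suffices to note that a nonzero polynomial cannot vanish identically on $\kb^T$ because $\kb$ is infinite — the paper treats this step as immediate from the definition of $\Sol^\A_T(F)$.
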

\begin{proof}
Let $U$ be as in (i). By Lemma \ref{lemma: extends implies solution} we have $U\subseteq \Sol^\A_T(F)\subseteq \A^T$. Since $U$ is Zariski dense in $\A^T$ and $\Sol^\A_T(F)$ is closed in $\A^T$, we see that $\Sol^\A_T(F)=\A^T$. So (i)$\Rightarrow$(ii). On the other hand, (ii)$\Rightarrow$(i) by Lemma \ref{lemma: U extends}.
Clearly, (iv) and (iii) are equivalent. Moreover, (iii)$\Leftrightarrow$(ii) by definition of $\Sol^\A_T(F)$.
\end{proof}

\begin{defi} \label{defi: free sets} Let $F\subseteq k\{y_1,\ldots,y_n\}$. A finite subset $T$ of $\N\times\{1,\ldots,n\}$ is \emph{free} with respect to $F$ if it satisfies the equivalent properties of Proposition \ref{prop: characterize free}.
\end{defi}

In Section \ref{subsec: A characterization of free sets in terms of affine} below we will obtain yet another characterization of free sets.
We next look at a couple of examples to familiarize ourselves with the definitions introduced above.

\begin{ex} \label{ex: infty continued}
	Let us return to Example \ref{ex: infinity}. So $F=\{y_1\s(y_1)-1\}$.  We have already seen that for $T=\{0\}$ every non-zero $a_0\in \mathbb{C}=\A^T$ extends to an affine sequence solution. Thus $T=\{0\}$ is free with respect to $F$.
 The element $a_0=0\in \A^T$ does not extend to an affine sequence solution but it extends to the projective sequence solution
	$$((1:0),(0:1),(1:0),(0:1),\ldots)\in(\P^1)^\N.$$ Indeed, for $i\geq 1$ and $T_i=\{0,\ldots,i\}$ we have
	$$\Sol^\A_{T_i}(F)=\{(a_0,a_0^{-1},\ldots,a_0^{\pm 1})|\ a_0\in \mathbb{C}\smallsetminus\{0\}\}\subseteq\A^{T_i}$$
	and $\Sol^\P_i(F)$ is obtained from $\Sol^\A_{T_i}(F)\simeq \A^1\smallsetminus\{0\}$ by adding two points, $((1:0),(0:1),\ldots)\in(\P^1)^{i+1}$ corresponding to the missing origin of $\A^1\smallsetminus\{0\}$ and $((0:1),(1:0),\ldots)\in(\P^1)^{i+1}$ corresponding to the missing point at infinity of $\A^1\smallsetminus\{0\}$. This shows that $\Sol^\P(F)\simeq \P^1$ is obtained from $\Sol^\A(F)\simeq \A^1\smallsetminus\{0\}$ by adding two points, namely $$((1:0),(0:1),\ldots) \text{ and } ((0:1),(1:0),\ldots)\in(\P^1)^\N.$$
	
	Note that $\Sol^\P(F)\subseteq (\P^1)^\N$ can also be described as the solution set of the multihomogeneous polynomials $$\s^i(y_1\s(y_1)-y_0\s(y_0))=\s^i(y_1)\s^{i+1}(y_1)-\s^i(y_0)\s^{i+1}(y_0), \ (i\geq 0).$$
	Every one-element subset $T$ of $\N$ is free with respect to $F$ but no subset of $\N$ with two or more elements is free. So, clearly, there is only one degree of freedom that should be counted in this example.
\end{ex}

\begin{ex} \label{ex: not constructible continued}
	Let us also revisit Example \ref{ex: not constructible}. So $F=\{\s(y_1)-y_1-1,\ y_1y_2-1\}$.
	For $T_i=\{0,\ldots,i\}\times \{1,2\}$ we have 
	$$\Sol^\A_{T_i}(F)=\left\{\begin{pmatrix}
		a & a+1 & \cdots & a+i \\
		a^{-1} & (a+1)^{-1} & \cdots & (a+i)^{-1}
	\end{pmatrix}\Big|\ a\in\mathbb{C}\smallsetminus\{0,-1,\ldots,-i\} \right\}\subseteq(\A^2)^{i+1}.
	$$
	So $\Sol^\A_{T_i}(F)\simeq \A^1\smallsetminus\{0,\ldots,-i\}$ and $\Sol^\P_i(F)\simeq\P^1$ is obtained from $\Sol_{T_i}^\A(F)$ by adding $i+2$ points at infinity. These are
		$$\left((a:a^2:1),\ (a+1:(a+1)^2:1),\ldots,\ (a+i:(a+i)^2:1)\right)\in(\P^2)^{i+1}, $$
	where $a=0,\ldots,-i$, corresponding to the missing points $\{0,\ldots,-i\}$ and the point
	$$\left((0:1:0),(0:1:0),\ldots,(0:1:0)\right)\in(\P^2)^{i+1}, $$
	corresponding to the missing point at infinity.	
	 To explicitly describe $\Sol_i^\P(F)\subseteq (\P^2)^{i+1}$ set $X=\V^h(y_1y_2-y_0^2)=\V^h((y_1y_2-1)^h)\subseteq\P^2$. Note that $X$ is isomorphic to $\P^1$ (via $\P^1\to X,\ (a:b)\mapsto (ab:b^2:a^2)$) and that
		$$
		\xymatrix{
		X \ar^{\simeq}[rr] & & \P^1 \\
		& \A^1 \ar@{_(->}[lu] \ar@{^(->}[ru] &	
		}
		$$
	commutes, where $\A^1\hookrightarrow\P^1,\ a\mapsto (1:a)$ is the standard embedding and $\A^1\hookrightarrow X,\ a\mapsto (a:a^2:1)$ extends $\A^1\smallsetminus\{0\}\simeq \V(y_1y_2-1)\hookrightarrow X$. The automorphism $p\colon\A^1\to \A^1,\ a\mapsto a+1$ extends to an automorphism $p\colon X\to X$. We claim that 
	\begin{equation} \label{eq: psols for p}
	\Sol^\P_i(F)=\left\{(x,p(x),\ldots,p^i(x))\in(\P^2)^{i+1}|\ x\in X\right\}.
	\end{equation}
	The right-hand side of (\ref{eq: psols for p}) is closed in $(\P^2)^{i+1}$ and, by construction, it contains the image of $\Sol^\A_{T_i}(F)$ in $(\P^2)^{i+1}$. In fact, $\{(x,p(x),\ldots,p^i(x))\in(\P^2)^{i+1}|\ x\in X\}\simeq X\simeq \P^1$ is obtained from $\Sol^\A_{T_i}(F)\simeq \A^1\smallsetminus\{0,\ldots,-i\}$ by adding the $i+2$ points described above. This implies (\ref{eq: psols for p}).	
	Similarly, 
	$$\Sol^\A(F)=\left\{\begin{pmatrix}
	a & a+1 & \cdots &  \\
	a^{-1} & (a+1)^{-1} & \cdots &
	\end{pmatrix}\in(\A^2)^\N\Big|\ a\in\mathbb{C}\smallsetminus\{-n|\ n\in\N\} \right\}\subseteq(\A^2)^\N
	$$
	is in bijection with $\A^1\smallsetminus\{-n|\ n\in \N\}$ and 
	$$\Sol^\P(F)=\left\{(x,p(x),p^2(x),\ldots)\in(\P^2)^\N|\ x\in X\right\}\subseteq(\P^2)^\N$$
is in bijection with $\P^1$. So we obtain $\Sol^\P(F)$ from $\Sol^\A(F)$ by adding infinitely many points, namely, 
	$\left((0:1:0),(0:1:0),\ldots\right)\in(\P^2)^\N$ and
	$$\left((a:a^2:1),\ (a+1:(a+1)^2:1),\ldots,\ \right)\in(\P^2)^\N, $$
	where $a=0,-1,-2,\ldots$.
	
	 Note that, in general, for $F\subseteq k\{y_1,\ldots,y_n\}$ we have an inclusion $$ \Sol^\P(F)=\left\{a\in(\P^n)^\N\big|\ f(a)=0 \ \forall\ f \in [F]^h\right\}\subseteq \left\{a\in(\P^n)^\N\big|\ \s^i(f)(a)=0 \ \forall\ f \in F^h,\ i\in \N\right\}.$$
	However, this inclusion can be strict. Indeed, in the present example, the point 
	$$a=((0:1:0),(0:0:1),(0:0:1),\ldots)\in(\P^2)^\N$$ is a solution to $\s^i(y_1y_2-y_0^2)$ and $\s^i(\s(y_1)y_0-y_1\s(y_0)-y_0\s(y_0))$ for all $i\in \N$ but $a$ does not belong to $\Sol^\P(F)$ because $p(0:1:0)=(0:1:0)\neq (0:0:1)$. Of course, this can also be seen in terms of the equations: From $\s(y_1)\s(y_2)-1\in[F]$ and $\s(y_1)-y_1-1\in[F]$ we obtain $(y_1+1)\s(y_2)-1\in[F]$. Therefore $f=\s(y_2)y_1+\s(y_2)-1\in[F]$ but $f^h=\s(y_2)y_1+\s(y_2)y_0-y_0\s(y_0)$ does not vanish on $a$.

	For $T=\{(1,0)\}$, the set of all $a\in \A^T=\mathbb{C}$ that extend to an affine sequence solution of $F$ is $\mathbb{C}\smallsetminus\{-n|\ n\in\N\}$, which does not contain a non-empty Zariski open subset. The set of all $a\in \A^T=\mathbb{C}$ that extend to a projective sequence solution of $F$ is $\mathbb{C}\smallsetminus \{0\}$, which is Zariski open: Indeed, for $a\in\mathbb{C}\smallsetminus \{0\}$ the point $$b=b_a=\left((a:a^2:1),\ (a+1:(a+1)^2:1),\ldots,\ \right)\in(\P^2)^\N $$
	is a projective sequence solution of $F$ that extends $a$ because $(a:a^2:1)=(1:a:a^{-1})$. The point $a=0\in \A^T$ does not extend to a projective sequence solution of $F$ because the equation $y_1y_2-y_0^2=0$ does not have a solution with $y_0=1$ and $y_1=0$. This shows that in Lemma \ref{lemma: U extends} one cannot choose $U=\Sol^\A_T(F)$ in general.
	So $T=\{(0,1)\}$ is free with respect to $F$. More generally, every one-element subset of $\N\times \{1,2\}$ is free with respect to $F$ but no subset with two or more elements is free with respect to $F$. To see this, note that for a one-element subset $T$ of $\N\times \{1,2\}$, all elements of $\A^T\smallsetminus\{0\}$ extend to a projective sequence solution of $F$: For $T=\{(i,1)\}$, $b_{a-i}$ extends $a\in \A^T\smallsetminus\{0\}$ and for $T=\{(i,2)\}$, $b_{a^{-1}-i}$ extends $a\in \A^T\smallsetminus\{0\}$. No subset with two or more elements can be free because $\Sol^\P_i(F)\simeq \P^1$ is one dimensional for every $i\in \N$. Alternatively, for any two distinct elements in $\{y_1,y_2,\s(y_1),\s(y_2),\ldots\}$ we can always find a non-zero polynomial in $[\s(y_1)-y_1-1,\ y_1y_2-1]$ that only contains those two elements. So condition (iii) of Proposition \ref{prop: characterize free} is violated.
\end{ex}

\begin{ex} \label{ex: linear}
	Let $f=\s^m(y_1)+\lambda_{m-1}\s^{m-1}(y_1)+\ldots+\lambda_0y_1$ be a homogeneous linear difference polynomial over $k$. Then every $a=(a_0,\ldots,a_{m-1})\in\kb^m$ extends to an affine sequence solution via the recursive formula $a_{m+i}=\s^i(\lambda_{m-1})a_{m-1+i}+\ldots+\s^i(\lambda_0)a_i$ for $i\geq 0$. Thus $T=\{0,\ldots,m-1\}$ is free with respect to $F=\{f\}$. On the other hand, no subset of $\N$ containing more than $m$ elements is free with respect to $f$. So, overall, we count $m$ degrees of freedom.
	
	The same reasoning applies to any order $m$ difference polynomial of the form $f=\s^m(y_1)+g(y_1,\ldots,\s^m(y_1))$.
\end{ex}

\begin{ex} \label{ex: product}
	Let $f=y_1\s(y_1)$ over $(k,\s)=(\mathbb{C},\id)$. A sequence $a=(a_0,a_1,\ldots)\in\mathbb{C}^\N$ is an affine sequence solution if and only if $a_ia_{i+1}=0$ for $i\geq 0$, i.e., if every second entry is zero. For $m\geq 0$ the sets $T=\{0,2,4,\ldots,2m\}$ and $T=\{1,3,\ldots,2m+1\}$ are free with respect to $f$ but no subset of $\N$ containing two consecutive integers is free with respect to $f$.
\end{ex}

As in the above example, for a general system $F\subseteq k\{y_1,\ldots,y_n\}$ of algebraic difference equations one expects to encounter infinitely many degrees of freedom, when writing down a solution in the ring of sequences. Thus, to count them in a reasonable fashion, we need to count them asymptotically. For $i\geq 0$,
$$d_i(F)=\max\big\{|T| \ |\ T\subseteq \{0,\ldots,i\}\times\{1,\ldots,n\} \text{ is free w.r.t. } F\big\}$$
counts the degrees of freedom up to order $i$. To obtain a value between $0$ and $n$ we normalize $d_i(F)$ appropriately, i.e., we consider $0\leq\frac{d_i(F)}{i+1}\leq n$.

\begin{defi} \label{def: sdim for systems}
	Let $F\subseteq k\{y_1,\ldots,y_n\}$. In Corollary \ref{cor: limit exists} below it is shown that $$\sdim(F)=\lim_{i\to\infty}\frac{d_i(F)}{i+1}$$
	exists (inside $\mathbb{R}$). We call this limit the \emph{$\s$-dimension of $F$}.
\end{defi}
Note that by construction $\sdim(F)=\sdim([F])$ and $0\leq \sdim(F)\leq n$ for $F\subseteq k\{y_1,\ldots,y_n\}$. In Section \ref{sec: Comparison} we will compare $\sdim(F)$ with other notions of dimensions in difference algebra. In particular, we will show that our definition agrees with the standard definition via $\s$-transcendence bases whenever the latter notion applies. 

\begin{ex}
	For the sets $F$ in Examples \ref{ex: infty continued} and \ref{ex: not constructible continued} we have $d_i(F)=1$ for all $i\geq 0$ and so $\sdim(F)=0$. Also for $F$ as in Example \ref{ex: linear} $d_i(F)$ is bounded and so $\sdim(F)=0$. For $F=\{0\}\subseteq k\{y_1,\ldots,y_n\}$ one has $d_i(F)=n(i+1)$ and so $\sdim(F)=n$ as expected.
\end{ex}

The following example shows that $\sdim(F)$ does not need to be an integer.

\begin{ex}
 As in Example \ref{ex: product} let $F=\{y_1\s(y_1)\}$. For $i\geq 0$ even we have $d_i(F)=\frac{i}{2}$ and for $i$ odd we have $d_i(F)=\frac{i+1}{2}$. So $\sdim(F)=\lim_{i\to\infty}\frac{d_i(F)}{i+1}=\frac{1}{2}$.
\end{ex}
In Section \ref{sec:Covering Density and the dimension of difference monomials} we will determine the $\s$-dimension of a general univariate $\s$-monomial. Moreover, since the $\s$-dimension is not necessarily an integer it is natural to wonder which numbers occur. This question will be addressed in Section \ref{sec:Values of the difference dimension}.

\subsection{A characterization of free sets in terms of affine sequence solutions} \label{subsec: A characterization of free sets in terms of affine}

To complement Definition \ref{defi: free sets}, we deduce in this subsection a characterization of free sets that avoids projective sequence solutions. In fact, we show that, at least over an uncountable $\s$-field $k$, $T\subseteq \N\times \{1,\ldots,n\}$ is free with respect to $F\subseteq k\{y_1,\ldots,y_n\}$ if and only if the set of all $a\in\A^T$ that extend to an affine sequence solution of $F$ is Zariski dense in $\A^T$.

To also have a statement available for arbitrary $\s$-fields $k$, we fix an uncountable algebraically closed field $K$ containing $k$ as a subfield and we consider all solutions sets over $K$. For example, if $k$ is uncountable, we could choose $K=\kb$. Similarly to Subsection \ref{subsec: Affine sequence solutions}, we consider $K^\N$ as a \ks-algebra via $\s((a_i)_{i\in\N})=(a_{i+1})_{i\in\N}$ and $k\to K^\N,\ \lambda\mapsto (\s^i(\lambda))_{i\in\N}$. We set $\A^n_K=K^n$ and for $F\subseteq k\{y_1,\ldots,y_n\}$ we set
$$\Sol^{\A_K}(F)=\{a\in (K^\N)^n|\ f(a)=0 \ \forall \ f\in F\}=\V([F])\subseteq (\A^n_K)^\N.$$
For a finite subset $T$ of $\N\times\{1,\ldots,n\}$ we define
$$\Sol^{\A_K}_T(F)=\V([F]\cap k[y_T])\subseteq \A_K^T.$$

\begin{lemma} \label{lemma: Zariski dense}
The image of $\Sol^{\A_K}(F)$ in $\Sol^{\A_K}_T(F)$ is Zariski dense.
\end{lemma}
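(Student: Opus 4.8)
The plan is to reinterpret both solution sets in terms of homomorphisms of $k$-algebras and then invoke a Nullstellensatz-type fact that holds over the uncountable field $K$. Write $R=k\{y_1,\ldots,y_n\}/[F]$ and, for the given finite set $T$, $R_T=k[y_T]/([F]\cap k[y_T])$. Since $[F]\cap k[y_T]$ is precisely the kernel of $k[y_T]\to R$, there is a natural inclusion $R_T\hookrightarrow R$ of $k$-algebras. Identifying the variety $\V(I)$ of an ideal $I$ in a polynomial ring with the set of $k$-algebra homomorphisms of the corresponding quotient into $K$, one obtains $\Sol^{\A_K}(F)=\Hom_{k\text{-alg}}(R,K)$ and $\Sol^{\A_K}_T(F)=\Hom_{k\text{-alg}}(R_T,K)$, with the projection $\Sol^{\A_K}(F)\to\Sol^{\A_K}_T(F)$ corresponding to precomposition with $R_T\hookrightarrow R$. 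Base changing along the faithfully flat inclusion $k\hookrightarrow K$ turns these into $\Hom_{K\text{-alg}}(R\otimes_k K,K)$ and $\Hom_{K\text{-alg}}(R_T\otimes_k K,K)$, and $R_T\otimes_k K\hookrightarrow R\otimes_k K$ remains injective. Here $R\otimes_k K$ is a $K$-algebra generated by the countable set of images of the $\s^l(y_j)$, whereas $R_T\otimes_k K$ is a finitely generated $K$-algebra whose reduction $(R_T\otimes_k K)_{\mathrm{red}}$ is the coordinate ring of the $K$-variety $\Sol^{\A_K}_T(F)$. We may assume $R\neq 0$, the case $R=0$ being trivial.

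The key point is the following elementary lemma: \emph{if $K$ is an uncountable algebraically closed field and $A\neq 0$ is a $K$-algebra generated by an at most countable set, then for every non-nilpotent $a\in A$ there is a $K$-algebra homomorphism $\phi\colon A\to K$ with $\phi(a)\neq 0$.} Indeed, the localization $A_a$ is again a nonzero, countably generated $K$-algebra; choose a maximal ideal $\m$ of $A_a$. Then $A_a/\m$ is a field which is a countably generated $K$-algebra, hence of dimension at most $\aleph_0$ over $K$. Such a field must equal $K$: otherwise, $K$ being algebraically closed, it contains an element $t$ transcendental over $K$, but then the $|K|$ elements $(t-c)^{-1}$ ($c\in K$) are $K$-linearly independent in $K(t)$ (clear denominators and evaluate at the $c$'s), contradicting $|K|>\aleph_0$. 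Composing $A\to A_a\to A_a/\m=K$ yields $\phi$, and $\phi(a)\neq 0$ because $a$ is a unit in $A_a$ and so lies outside $\m$.

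From here I would conclude as follows. A regular function on $\Sol^{\A_K}_T(F)$ that vanishes on the image of $\Sol^{\A_K}(F)$ is represented by some $g\in R_T\otimes_k K$ with $\phi(g)=0$ for every $\phi\in\Hom_{K\text{-alg}}(R\otimes_k K,K)$, where $g$ is viewed inside $R\otimes_k K$ via the inclusion above. Applying the lemma to $A=R\otimes_k K$ shows that $g$ is nilpotent in $R\otimes_k K$, hence nilpotent in the subring $R_T\otimes_k K$, hence zero in $(R_T\otimes_k K)_{\mathrm{red}}$; by Hilbert's Nullstellensatz for the finitely generated $K$-algebra $R_T\otimes_k K$, this means $g$ vanishes on all of $\Sol^{\A_K}_T(F)$. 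Thus no nonzero function on $\Sol^{\A_K}_T(F)$ vanishes on the image of $\Sol^{\A_K}(F)$, i.e.\ that image is Zariski dense.

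I expect the only genuinely non-formal step to be the countable-Nullstellensatz lemma, where the hypothesis that $K$ is uncountable (and algebraically closed) must be used correctly through the linear-independence and $K$-dimension count; the rest is bookkeeping with base change, the identification of the solution sets with $\Hom$-sets, and the classical Nullstellensatz. It is worth double-checking along the way that $R\otimes_k K$ and $R_T\otimes_k K$ are indeed generated by the claimed countable, respectively finite, sets, that $R_T\to R$ is injective with the stated kernel, and that the projection map is the claimed restriction map — but all of this is routine.
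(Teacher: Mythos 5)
Your proof is correct, and its skeleton matches the paper's: show that any $g\in K[y_T]$ vanishing on the image of $\Sol^{\A_K}(F)$ is nilpotent modulo the extension of $[F]$ to $K$, then transfer that nilpotence down to the $k[y_T]$-level and conclude. The difference is in how the key Nullstellensatz-type input is obtained and how the transfer is phrased. The paper cites Lang's Nullstellensatz for polynomial rings in an arbitrary set $Y$ of variables over an algebraically closed field with $|K|>|Y|$, deduces $g^m\in[F]\otimes_k K$, and then uses the flatness identity $([F]\otimes_k K)\cap(k[y_T]\otimes_k K)=([F]\cap k[y_T])\otimes_k K$. You instead recast the solution sets as $\Hom$-sets and prove from scratch exactly the special case needed: a nonzero countably generated algebra over an uncountable algebraically closed $K$ admits a $K$-point not killing a given non-nilpotent element, via localization, a maximal ideal, and the $(t-c)^{-1}$ cardinality argument; this is precisely where uncountability enters, and you use it correctly. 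Your replacement of the intersection identity by the remark that nilpotence passes to the subring $R_T\otimes_k K\subseteq R\otimes_k K$ (injective by flatness of $K$ over $k$) carries the same content. What your route buys is self-containedness and transparency about the role of uncountability; what the paper's buys is brevity and a stronger quotable statement. One small remark: the final appeal to Hilbert's Nullstellensatz is superfluous --- once $g$ is nilpotent in $R_T\otimes_k K$, some power of $g$ lies in the ideal of $K[y_T]$ generated by $[F]\cap k[y_T]$, so $g$ vanishes on $\Sol^{\A_K}_T(F)$ directly.
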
 
\begin{proof}
	Let $g\in K[y_T]$ be a polynomial that vanishes on $\Sol^{\A_K}(F)$. We have to show that $g$ also vanishes on $\Sol^{\A_K}_T(F)$.
	
	There is a (strong) Nullstellensatz for polynomials in an arbitrary set of variables $Y$ (\cite{Lang:HilbertsNullstellensatzInInfiniteDimensionalSpace}). It states that for an algebraically closed field $K$ with $|K|>|Y|$, a polynomial $h\in K[Y]$ vanishes on all solutions of $H\subseteq K[Y]$ in $K^Y$ if and only if $h\in \sqrt{(H)}$. Therefore $g\in \sqrt{(F,\s(F),\ldots)}\subseteq K[\s^i(y_j)|\ (i,j)\in\N\times\{1,\ldots,n\}]$. Thus $g^m\in (F,\s(F),\ldots)=[F]\otimes_k K\subseteq k\{y_1,\ldots,y_n\}\otimes_kK$
 for some $m\geq 1$. Since $g\in K[y_T]=k[y_T]\otimes_k K$, it follows that
 $$g^m\in ([F]\otimes_k K)\cap (k[y_T]\otimes_k K)=([F]\cap k[y_T])\otimes_k K\subseteq k\{y_1,\ldots,y_n\}\otimes_k K.$$
 Thus $g^m$ vanishes on $\Sol^{\A_K}_T(F)$ and therefore also $g$ vanishes on $\Sol^{\A_K}_T(F)$.
\end{proof}

In \cite{Lang:HilbertsNullstellensatzInInfiniteDimensionalSpace} it is shown that the cardinality assumption $|K|>|Y|$ in the above proof is necessary for the Nullstellensatz in infinitely many variables. In fact, Lemma \ref{lemma: Zariski dense} does not hold without the assumption that $K$ is uncountable (Example \ref{ex: not Zariski dense}).

\begin{cor}
	Let $F\subseteq k\{y_1,\ldots,y_n\}$ and let $T$ be a finite subset of $\N\times\{1,\ldots,n\}$. Then $T$ is free with respect to $F$ if and only if the image of $\Sol^{\A_K}(F)$ in $\A_K^T$ is Zariski dense.
\end{cor}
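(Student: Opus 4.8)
The plan is to combine the characterization of free sets from Proposition \ref{prop: characterize free} with the Zariski density established in Lemma \ref{lemma: Zariski dense}. The key equivalence to exploit is condition (iii) of Proposition \ref{prop: characterize free}: $T$ is free with respect to $F$ if and only if $k[y_T]\cap[F]=\{0\}$, or equivalently $\Sol^{\A_K}_T(F)=\A^T_K$. So the whole statement reduces to showing: $k[y_T]\cap[F]=\{0\}$ if and only if the image of $\Sol^{\A_K}(F)$ in $\A^T_K$ is Zariski dense.

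First I would argue the forward direction. Suppose $T$ is free, i.e., $k[y_T]\cap[F]=\{0\}$. Then $\Sol^{\A_K}_T(F)=\V([F]\cap k[y_T])=\V(\{0\})=\A^T_K$. By Lemma \ref{lemma: Zariski dense}, the image of $\Sol^{\A_K}(F)$ in $\Sol^{\A_K}_T(F)=\A^T_K$ is Zariski dense, which is exactly what we want. For the converse, suppose the image of $\Sol^{\A_K}(F)$ in $\A^T_K$ is Zariski dense. This image is contained in the closed set $\Sol^{\A_K}_T(F)\subseteq\A^T_K$, so $\Sol^{\A_K}_T(F)$ must be Zariski dense in $\A^T_K$; being closed, it equals $\A^T_K$. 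Hence $g:=f$ vanishes on $\A^T_K$ for every $f\in[F]\cap k[y_T]$, and since $K$ is algebraically closed (so $\A^T_K=K^T$ is a Zariski-dense subset of itself with infinitely many points in each coordinate direction), this forces $f=0$. Thus $k[y_T]\cap[F]=\{0\}$, so $T$ is free with respect to $F$.

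Strictly speaking I should be a touch careful about the phrase ``the image of $\Sol^{\A_K}(F)$ in $\A^T_K$'': Definition \ref{defi: free sets} and Proposition \ref{prop: characterize free} were phrased over $\kb$, but condition (iii), $k[y_T]\cap[F]=\{0\}$, is an intrinsic algebraic condition on the $\s$-polynomial ring over $k$ and does not depend on the choice of algebraically closed overfield; equivalently one can note that $\Sol^\A_T(F)=\A^T$ over $\kb$ iff $\Sol^{\A_K}_T(F)=\A^T_K$ over $K$, since both say the same ideal is zero. I would make this remark explicit so the reader sees why switching from $\kb$ to the uncountable $K$ is harmless for the notion of freeness itself — the only place uncountability is genuinely needed is Lemma \ref{lemma: Zariski dense}, via the infinite-variable Nullstellensatz of \cite{Lang:HilbertsNullstellensatzInInfiniteDimensionalSpace}.

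There is no real obstacle here; the corollary is essentially a packaging of Lemma \ref{lemma: Zariski dense} together with the already-established equivalences. If anything, the only subtlety worth a sentence is the one just mentioned: making sure the reader understands that ``free'' is a condition about $[F]$ and $k[y_T]$ that transfers verbatim between $\kb$ and $K$, so that the corollary as stated (with $\Sol^{\A_K}$ on one side and ``free'', defined via $\kb$, on the other) is not comparing apples to oranges.
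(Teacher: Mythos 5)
Your proposal is correct and follows essentially the same route as the paper: freeness is translated via condition (iii) of Proposition \ref{prop: characterize free} into $[F]\cap k[y_T]=\{0\}$, the forward direction is exactly Lemma \ref{lemma: Zariski dense}, and the converse is the observation that a polynomial vanishing on a Zariski dense subset of $\A_K^T$ (over the infinite field $K$) must be zero. Your added remark that condition (iii) is intrinsic to $k\{y_1,\ldots,y_n\}$ and hence transfers verbatim from $\kb$ to $K$ is a point the paper leaves implicit, but it does not change the argument.
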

\begin{proof}
	A polynomial in $[F]\cap k[y_T]$ vanishes on the image of $\Sol^{\A_K}(F)$ in $\A_K^T$. Thus, if the latter is Zariski dense in $\A_K^T$, then $[F]\cap k[y_T]=\{0\}$ and so $T$ is free with respect to $F$ (Proposition \ref{prop: characterize free}).
	
	On the other hand, if $T$ is free with respect to $F$, then $[F]\cap k[y_T]=\{0\}$ and so $\Sol_T^{\A_K}(F)=\A_K^T$. Thus the image of  $\Sol^{\A_K}(F)$ in $\A_K^T$ is Zariski dense by Lemma \ref{lemma: Zariski dense}.
\end{proof}

\section{The difference dimension of a difference algebra} \label{sec: The difference dimension}

In this section we introduce the $\s$-dimension $\sdim(R)$ of a finitely $\s$-generated \ks-algebra. We then show that, despite the fact that  $\sdim(R)$ need not be an integer, it satisfies many properties similar to the familiar case of finitely generated algebras over a field. For example, the difference dimension is compatible with tensor products and base change. For $F\subseteq k\{y_1,\ldots,y_n\}$ we have $\sdim(F)=\sdim(k\{y_1,\ldots,y_n\}/[F])$ and so results about the $\s$-dimension of $\s$-algebras have immediate corollaries for the $\s$-dimension of systems of algebraic difference equations.

\subsection{Recollection: Dimension of algebras}
	
Before defining the $\s$-dimension, we recall some well-known properties of the Krull dimension for finitely generated algebras over a field. (See, e.g., Sections 8 and 13 in \cite{Eisenbud:view}). This will be helpful for two reasons. Firstly, we will use these results in our later poofs and secondly, some of our results are difference analogs of these classical results about the Krull dimension.

Recall that the Krull dimension $\dim(R)$ of a ring $R$ is defined as the supremum over the lengths $n$ of all chains $\p_0\subsetneqq\p_1\subsetneqq\ldots\subsetneqq\p_n$ of prime ideals in $R$. For finitely generated algebras over a field, this supremum is finite and can be described through algebraically independent elements:

\begin{prop} \label{prop: dim and algebraic independence}
Let $R$ be an algebra over a field $k$ and let $A$ be a finite subset of $R$ such that $R=k[A]$.
Then 
\begin{equation} \label{eq: dim}
\dim(R)=\max\{|B|\ |\ B\subseteq A, \text{ $B$ is algebraically independent over $k$}\}.
\end{equation}

In particular, if $R$ is an integral domain, then $\dim(R)$ equals the transcendence degree of the field of fractions of $R$ over $k$. 
\end{prop}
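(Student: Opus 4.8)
The plan is to prove this as a standard consequence of Noether normalization and the behavior of transcendence degree under integral extensions. First I would reduce to the case where $R$ is an integral domain. If $\mathfrak{p}_0 \subsetneqq \cdots \subsetneqq \mathfrak{p}_n$ is a chain of primes in $R$ realizing the Krull dimension, then $\dim(R) = \dim(R/\mathfrak{p}_0)$, and the image of $A$ generates $R/\mathfrak{p}_0$ as a $k$-algebra; moreover a subset of $A$ that is algebraically independent modulo $\mathfrak{p}_0$ is algebraically independent in $R$, so the right-hand side of \eqref{eq: dim} for $R$ is at least that for $R/\mathfrak{p}_0$. Conversely the maximum over subsets of $A$ can only shrink when passing to a quotient. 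Hence it suffices to prove the two-sided inequality assuming $R$ is a domain, in which case I must show both that $\dim(R)$ equals the transcendence degree $d$ of $\operatorname{Frac}(R)$ over $k$, and that this common value equals $\max\{|B| : B \subseteq A \text{ algebraically independent}\}$.

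For the inequality $\dim(R) \le \max\{|B|\}$, write $A = \{a_1,\ldots,a_m\}$ and pick a maximal algebraically independent subset $B$; after relabeling say $B = \{a_1,\ldots,a_d\}$. Then every $a_j$ with $j > d$ is algebraic over $k(a_1,\ldots,a_d)$, so $R$ is integral over a localization of $k[a_1,\ldots,a_d]$, and in fact one can clear denominators to see that $\operatorname{Frac}(R)$ has transcendence degree exactly $d$ over $k$. Since for a finitely generated $k$-domain the Krull dimension equals the transcendence degree of the fraction field (this is the content of Noether normalization together with the fact that integral extensions preserve dimension and that a polynomial ring $k[x_1,\ldots,x_d]$ has dimension $d$), we get $\dim(R) = d \le \max\{|B|\}$. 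For the reverse inequality $\dim(R) \ge \max\{|B|\}$: given any algebraically independent $B \subseteq A$ with $|B| = e$, the subalgebra $k[B]$ is a polynomial ring of Krull dimension $e$, and since $R$ is a finitely generated $k$-algebra containing $k[B]$, going-up / the dimension formula for finitely generated algebras gives $\dim(R) \ge \dim(k[B]) = e$. Combining the two directions yields \eqref{eq: dim}, and the "in particular" clause is then immediate since in the domain case $\dim(R) = d = \trdeg_k \operatorname{Frac}(R)$.

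I would organize the write-up by first invoking Noether normalization to record that a finitely generated $k$-domain $R$ admits a finite injection $k[x_1,\ldots,x_d] \hookrightarrow R$ with the $x_i$ algebraically independent, that $d = \trdeg_k \operatorname{Frac}(R)$, and that $\dim(R) = \dim(k[x_1,\ldots,x_d]) = d$; these are exactly the cited results from \cite{Eisenbud:view}. Then the only real work is the combinatorial matching between a maximal algebraically independent subset of the given generating set $A$ and the number $d$, which follows because any maximal algebraically independent subset of $A$ is a transcendence basis of $\operatorname{Frac}(R)$ over $k$ (maximality forces every element of $A$, hence every element of $R$, to be algebraic over it).

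The main obstacle, such as it is, is not any single deep step but rather bookkeeping: ensuring that "algebraically independent subset of $A$" is correctly related to "transcendence basis of $\operatorname{Frac}(R)$", and handling the non-domain case cleanly via a minimal prime. One subtlety worth stating carefully is that a subset $B \subseteq A$ which is a maximal algebraically independent subset of $A$ need not a priori be a maximal algebraically independent subset of $R$ — but it is, because every $a \in A$ is algebraic over $k(B)$ by maximality within $A$, hence $R = k[A]$ is algebraic over $k(B)$, so $B$ is in fact a transcendence basis of $\operatorname{Frac}(R)$. Once that observation is in place the proof is a short assembly of the recalled facts.
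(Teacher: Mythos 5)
Your overall strategy (reduce to a domain via a minimal prime, then use $\dim=\operatorname{trdeg}$ for affine domains) is the same as the paper's, but there is a genuine gap at the one step that actually carries content when $R$ is not a domain. First, the reduction does not do what you claim: the two sentences meant to justify it assert the same inequality twice, namely that the maximum over subsets of $A$ computed in $R$ is at least the maximum computed in $R/\mathfrak{p}_0$ (a subset independent modulo $\mathfrak{p}_0$ stays independent in $R$). Combined with $\dim(R)=\dim(R/\mathfrak{p}_0)$ and the domain case this yields $\dim(R)\le\max\{|B|\}$, but it does \emph{not} yield $\max\{|B|\}\le\dim(R)$, since a subset of $A$ that is algebraically independent in $R$ may well become dependent modulo the particular minimal prime $\mathfrak{p}_0$ you fixed. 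So the general case is not in fact reduced to the domain case, and the reverse inequality has to be proved for arbitrary $R$.

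That reverse inequality is exactly where your argument is incomplete: for general $R$, the inequality $\dim(R)\ge\dim(k[B])$ for the polynomial subring $k[B]\subseteq R$ is true, but it is not given by going-up (the extension $k[B]\subseteq R$ is not integral in general) nor by the dimension formula. The missing ingredient is the one the paper imports from Bourbaki: every minimal prime of a subring is the contraction of a prime of the bigger ring, so there is a prime $\mathfrak{p}$ of $R$ with $\mathfrak{p}\cap k[B]=\{0\}$ (equivalently, localize $R$ at the nonzero elements of $k[B]$, which is nonzero because $k[B]$ injects into $R$, and contract any prime); then $k[B]$ embeds into the affine domain $R/\mathfrak{p}$, giving $|B|\le\operatorname{trdeg}_k\operatorname{Frac}(R/\mathfrak{p})=\dim(R/\mathfrak{p})\le\dim(R)$. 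Note also that within this paper the monotonicity of $\dim$ under injections of finitely generated $k$-algebras is Lemma \ref{lemma: dim under morphisms}(i), which is \emph{deduced from} the proposition you are proving, so it cannot be quoted as known here. Once you insert the lying-over-for-minimal-primes step, applied to each independent $B$ separately rather than to a single fixed $\mathfrak{p}_0$, your proof coincides with the paper's.
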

\begin{proof}
	See \cite[Tag 00P0]{stacks-project} 
	for a proof that $\dim(R)$ equals the transcendence degree of the field of fractions of $R$ over $k$ in case $R$ is an integral domain. In general, let $d$ denote the value on the right hand side of equation (\ref{eq: dim}). From the definition of $\dim(R)$, it follows that $\dim(R)=\dim(R/\p)$ for some minimal prime ideal $\p$ of $R$. Since the image of $A$ in $R/\p$ generates the field of fractions of $R/\p$ as a field extensions of $k$, it contains a transcendence basis. So we may choose $B\subseteq A$ such that the image of $B$ is a transcendence basis of the field of fractions of $R/\p$ over $k$. Then $|B|=\dim(R/\p)=\dim(R)$. Since the image of $B$ in $R/\p$ is algebraically independent over $k$, also $B$ itself is algebraically independent over $k$. Therefore, $\dim(R)\leq d$.
	
	Conversely, assume that $B\subseteq A$ is algebraically independent over $k$ and $|B|=d$. Then $k[B]$ is a polynomial ring in $d$ variables. In particular, it is an integral domain. For any inclusion of rings $S_1\subseteq S_2$, any minimal prime ideal $\p_1$ of $S_1$ is of the form $\p_1=\p_2\cap S_1$ for some prime ideal $\p_2$ of $S_2$ (\cite[Chapter II, \S 2.6, Prop. 16]{Bourbaki:commutativealgebra}). Applying this to $k[B]\subseteq R$ with $\p_1$ the zero ideal of $k[B]$, we find a prime ideal $\p$ of $R$ with $\p\cap k[B]=\{0\}$. So $k[B]$ embeds into $R/\p$ and it follows that the transcendence degree of the field of fractions of $R/\p$ over $k$ is at least $|B|$. So, using \cite[Tag 00P0]{stacks-project} again, we obtain $d=|B|\leq\dim(R/\p)\leq\dim(R)$. Altogether, we obtain $\dim(R)=d$ as desired.
\end{proof}

The following lemma explains the behavior of Krull dimension under morphisms.

\begin{lemma} \label{lemma: dim under morphisms}
	Let $R$ and $S$ be finitely generated $k$-algebras.
	\begin{enumerate}
		\item If there exists an injective morphism $R\to S$ of $k$-algebras, then $\dim(R)\leq\dim(S)$.
		\item If there exists a surjective morphism $R\to S$ of rings, then $\dim(R)\geq\dim(S)$. 
	\end{enumerate}
\end{lemma}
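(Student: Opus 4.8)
The plan is to reduce both parts to the characterization of Krull dimension via algebraically independent elements provided by Proposition \ref{prop: dim and algebraic independence}, keeping in mind that for an arbitrary (not necessarily injective) morphism only the surjective/injective cases behave well.

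For part (i), suppose $\varphi\colon R\to S$ is an injective morphism of $k$-algebras. Write $R=k[A]$ for a finite set $A$ and apply Proposition \ref{prop: dim and algebraic independence} to choose $B\subseteq A$ algebraically independent over $k$ with $|B|=\dim(R)$. Since $\varphi$ is injective and a $k$-algebra morphism, $\varphi(B)$ is a set of elements of $S$ that is algebraically independent over $k$ (any polynomial relation over $k$ among the $\varphi(b)$ would pull back to one among the $b$). Now I would enlarge $\varphi(B)$ to a finite generating set $A'$ of $S$ (possible since $S$ is finitely generated), so $S=k[A']$ with $\varphi(B)\subseteq A'$; then Proposition \ref{prop: dim and algebraic independence} applied to $S$ gives $\dim(S)\geq |\varphi(B)|=|B|=\dim(R)$. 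Alternatively, and even more directly, one can avoid choosing $A'$: the subalgebra $k[\varphi(B)]\subseteq S$ is a polynomial ring in $\dim(R)$ variables, and by the same minimal-prime lifting argument used in the proof of Proposition \ref{prop: dim and algebraic independence} (\cite[Chapter II, \S 2.6, Prop. 16]{Bourbaki:commutativealgebra}), there is a prime $\p$ of $S$ with $\p\cap k[\varphi(B)]=\{0\}$, whence $\dim(S)\geq\dim(S/\p)\geq\dim(R)$.

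For part (ii), suppose $\psi\colon R\to S$ is a surjective morphism of rings; then $S\cong R/\ker(\psi)$. Any chain of primes $\q_0\subsetneqq\cdots\subsetneqq\q_n$ in $S$ pulls back along $\psi$ to a chain of primes $\psi^{-1}(\q_0)\subsetneqq\cdots\subsetneqq\psi^{-1}(\q_n)$ in $R$ of the same length (surjectivity ensures the inclusions stay strict and that preimages of primes are prime). Taking the supremum over all such chains gives $\dim(S)\leq\dim(R)$ directly from the definition of Krull dimension; this part does not even need finite generation.

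The only mild subtlety — and the one place to be careful — is that in part (i) we genuinely need \emph{injectivity}, not just that $\varphi$ is a morphism: for a general morphism $\dim(R)\leq\dim(S)$ can fail (e.g.\ $R$ large, $S=0$). Correspondingly, in part (ii) we need \emph{surjectivity} for the pullback argument. Once these hypotheses are in place, both parts are short; the main conceptual input is Proposition \ref{prop: dim and algebraic independence} for (i) and the elementary behavior of prime chains under quotients for (ii). I expect no real obstacle beyond bookkeeping.
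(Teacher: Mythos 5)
Your proposal is correct and follows essentially the same route as the paper: part (i) via extending the image of an algebraically independent subset of a generating set of $R$ to a finite generating set of $S$ and invoking Proposition \ref{prop: dim and algebraic independence}, and part (ii) via the correspondence between primes of $S$ and primes of $R$ containing the kernel. Your alternative for (i) using the Bourbaki prime-lifting argument is also fine, but it just re-runs the second half of the proof of Proposition \ref{prop: dim and algebraic independence}, so it offers no real shortcut.
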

\begin{proof}
	For (i), note that a finite generating set $A$ of $R$ can be extended to a finite generating set of $S$. An algebraically independent subset of $A$ remains algebraically independent in $S$ by the injectivity of $R\to S$. Thus the claim follows from Proposition \ref{prop: dim and algebraic independence}.
	
	Claim (ii) follows from the fact that prime ideals in $S$ are in bijection with prime ideals in $R$ containing the kernel of $R\to S$.
\end{proof}

The Krull dimension is additive with respect to the tensor product:

\begin{lemma} \label{lemma: dim and tensor}
	Let $R$ and $S$ be finitely generated $k$-algebras. Then $\dim(R\otimes_k S)=\dim(R)+\dim(S)$.
\end{lemma}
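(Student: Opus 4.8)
The plan is to reduce the statement about $\dim(R\otimes_k S)$ to the two inequalities $\dim(R\otimes_k S)\geq \dim(R)+\dim(S)$ and $\dim(R\otimes_k S)\leq \dim(R)+\dim(S)$, and to prove each using Proposition \ref{prop: dim and algebraic independence}. For the lower bound, choose finite generating sets $A$ of $R$ and $C$ of $S$, and by Proposition \ref{prop: dim and algebraic independence} pick $B\subseteq A$ algebraically independent over $k$ with $|B|=\dim(R)$ and $D\subseteq C$ algebraically independent over $k$ with $|D|=\dim(S)$. Then $A\otimes 1\cup 1\otimes C$ generates $R\otimes_k S$, and I would argue that the image of $(B\otimes 1)\cup(1\otimes D)$ in $R\otimes_k S$ is algebraically independent over $k$: indeed $k[B]\otimes_k k[D]$ is a polynomial ring in $|B|+|D|$ variables, and it injects into $R\otimes_k S$ because $k[B]\hookrightarrow R$ and $k[D]\hookrightarrow S$ are injections of $k$-vector spaces (here $k$ is a field, so tensoring preserves injectivity). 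Hence by Proposition \ref{prop: dim and algebraic independence}, $\dim(R\otimes_k S)\geq |B|+|D|=\dim(R)+\dim(S)$.

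For the upper bound I would first reduce to the case where $R$ and $S$ are integral domains, and in fact to the case where they are finitely generated field extensions of $k$. Concretely, by the same argument as in the proof of Proposition \ref{prop: dim and algebraic independence}, $\dim(R)=\dim(R/\p)$ for a minimal prime $\p$ of $R$ and $\dim(S)=\dim(S/\q)$ for a minimal prime $\q$ of $S$; since $R\otimes_k S\twoheadrightarrow R/\p\otimes_k S/\q$, Lemma \ref{lemma: dim under morphisms}(ii) gives $\dim(R\otimes_k S)\geq \dim(R/\p\otimes_k S/\q)$, which is the wrong direction — so instead I want the reverse: I would pick a minimal prime $\mathfrak r$ of $R\otimes_k S$ with $\dim(R\otimes_k S)=\dim((R\otimes_k S)/\mathfrak r)$, let $\p$ and $\q$ be its contractions to $R$ and $S$, note $\p$ and $\q$ contain some minimal primes, and observe $(R\otimes_k S)/\mathfrak r$ is a quotient of $(R/\p)\otimes_k(S/\q)$, reducing to $R,S$ domains. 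Then, passing to fraction fields $E=\operatorname{Frac}(R)$, $L=\operatorname{Frac}(S)$, one has $R\otimes_k S\subseteq E\otimes_k L$ only after localizing; more cleanly, $\dim(R\otimes_k S)$ equals the transcendence degree over $k$ of the fraction field of any of its minimal-prime quotients, which is a compositum of (a quotient of) $E$ and $L$ over $k$, hence has transcendence degree at most $\trdeg_k(E)+\trdeg_k(L)=\dim(R)+\dim(S)$ by additivity of transcendence degree in towers.

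The main obstacle is the upper bound, and specifically the bookkeeping with minimal primes of $R\otimes_k S$: one must be careful that a minimal prime $\mathfrak r$ of $R\otimes_k S$ contracts to primes $\p$, $\q$ that one may replace by minimal primes without losing the dimension count. The cleanest route is: fix minimal primes $\p$ of $R$ and $\q$ of $S$ realizing the dimensions; then $R\otimes_k S \to (R/\p)\otimes_k(S/\q)$ is surjective, and conversely, by \cite[Chapter II, \S 2.6, Prop. 16]{Bourbaki:commutativealgebra} applied as in Proposition \ref{prop: dim and algebraic independence}, there is a prime of $R\otimes_k S$ contracting to $\{0\}$ in a polynomial subring exhibiting the lower bound — but for the upper bound the honest statement is that every prime quotient of $R\otimes_k S$ has fraction field of transcendence degree $\leq \trdeg_k\operatorname{Frac}(R)+\trdeg_k\operatorname{Frac}(S)$, since such a fraction field is generated over $k$ by images of generators of $R$ and of $S$, and the images of the $R$-generators lie in a field of transcendence degree $\leq\dim(R)$ over $k$ while the remaining generators contribute at most $\dim(S)$ further. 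Combining the two inequalities yields $\dim(R\otimes_k S)=\dim(R)+\dim(S)$. I would also remark that $R\otimes_k S$ is indeed a finitely generated $k$-algebra, so $\dim$ is finite and Proposition \ref{prop: dim and algebraic independence} applies throughout.
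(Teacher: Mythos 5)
Your proof is correct, but your route to the upper bound differs from the paper's. The paper proves both inequalities in one stroke: with $C=\{a\otimes 1\mid a\in A\}\cup\{1\otimes b\mid b\in B\}$ generating $R\otimes_k S$, it observes that a subset $C'\subseteq C$ is algebraically independent over $k$ if and only if its two parts $A'\subseteq A$ and $B'\subseteq B$ are, so the single maximization in Proposition \ref{prop: dim and algebraic independence} splits into two independent maximizations and $\dim(R\otimes_k S)=\dim(R)+\dim(S)$ follows immediately. Your lower bound is exactly this observation (resting, as the paper's does implicitly, on the injectivity of $k[B]\otimes_k k[D]\hookrightarrow R\otimes_k S$ over a field), but for the upper bound you instead pass to a minimal prime $\mathfrak r$ of $R\otimes_k S$ realizing the dimension and bound the transcendence degree of the fraction field of the quotient: the images of the $R$-generators generate a quotient domain of $R$, hence a subfield of transcendence degree at most $\dim(R)$ (using Lemma \ref{lemma: dim under morphisms}(ii) and the domain case of Proposition \ref{prop: dim and algebraic independence}), and the images of the $S$-generators add at most $\dim(S)$ by additivity of transcendence degree in towers. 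This works, but it invokes more machinery than the paper needs, and your write-up carries some noise: the detour through contractions of $\mathfrak r$ and "reduction to domains" is not needed once you state the final argument, and the expression $\trdeg_k\operatorname{Frac}(R)$ is not literally meaningful when $R$ is not a domain -- your closing formulation (every prime quotient of $R\otimes_k S$ has fraction field of transcendence degree at most $\dim(R)+\dim(S)$) is the correct statement and repairs this. The trade-off: the paper's argument is shorter and entirely internal to Proposition \ref{prop: dim and algebraic independence}, while yours is the standard commutative-algebra argument via transcendence degrees, which is more flexible but requires the extra facts about dimension under surjections and towers of field extensions.
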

\begin{proof}
	Let $A\subseteq R$ and $B\subseteq S$ be finite such that $R=k[A]$ and $S=k[B]$. Set 
	$C=\{a\otimes 1|\ a\in A\}\cup \{1\otimes b|\ b\in B\}$.
	Then $k[C]=R\otimes_k S$ and a subset $C'$ of $C$ is algebraically independent over $k$ if and only if
	$A'=\{a\in A|\ a\otimes 1\in C'\}$ and $B'=\{b\in B|\ 1\otimes b\in C'\}$ are algebraically independent over $k$. Therefore $|C'|$ is maximal if and only if $|A'|$ and $|B'|$ is maximal. So the claim follows from Proposition \ref{prop: dim and algebraic independence}.
\end{proof}

The Krull dimension is invariant under base change:

\begin{lemma}[{\cite[Tag 00P3]{stacks-project}}] \label{lemma: dim and base change}
	Let $k'/k$ be a field extension and $R$ a finitely generated $k$-algebra. Then $\dim(R\otimes_k k')=\dim(R)$.
\end{lemma}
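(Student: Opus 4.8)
The statement to prove is Lemma \ref{lemma: dim and base change}: for a field extension $k'/k$ and $R$ a finitely generated $k$-algebra, $\dim(R\otimes_k k')=\dim(R)$.

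The plan is to reduce to the transcendence-degree description of Krull dimension established in Proposition \ref{prop: dim and algebraic independence}. First I would pick a finite generating set $A$ of $R$ over $k$, so that $R=k[A]$, and observe that $A$ (more precisely the image $A\otimes 1$) also generates $R\otimes_k k'$ over $k'$. By Proposition \ref{prop: dim and algebraic independence}, $\dim(R)$ is the maximal size of a subset $B\subseteq A$ algebraically independent over $k$, and $\dim(R\otimes_k k')$ is the maximal size of a subset $B\subseteq A$ with $B\otimes 1$ algebraically independent over $k'$.

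The heart of the matter is then the elementary commutative-algebra fact that a finite subset $B$ of $R$ is algebraically independent over $k$ if and only if $B\otimes 1$ is algebraically independent over $k'$ inside $R\otimes_k k'$. The direction ``independent over $k'$ implies independent over $k$'' is immediate, since a polynomial relation over $k$ is a fortiori one over $k'$. For the converse I would argue as follows: if $B=\{b_1,\dots,b_d\}$ is algebraically independent over $k$, then the $k$-subalgebra $k[B]\subseteq R$ is a polynomial ring $k[x_1,\dots,x_d]$; tensoring the injection $k[B]\hookrightarrow R$ with $k'$ over $k$ preserves injectivity because $k'$ is flat (indeed free) over $k$, so $k'[B]=k[B]\otimes_k k'\hookrightarrow R\otimes_k k'$, and $k'[B]$ is a polynomial ring $k'[x_1,\dots,x_d]$, which exactly says $B\otimes 1$ is algebraically independent over $k'$. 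Hence the maximal independent subsets of $A$ are the same whether computed over $k$ or over $k'$, and the two dimensions agree.

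I expect the only real subtlety to be the flatness/injectivity step — making sure that tensoring up along $k\to k'$ does not destroy algebraic independence — but this is standard since field extensions are faithfully flat. Everything else is bookkeeping with Proposition \ref{prop: dim and algebraic independence}. (Alternatively, one could cite \cite[Tag 00P3]{stacks-project} directly, as the statement already does, but the argument via transcendence bases is self-contained given the tools developed in this section.)
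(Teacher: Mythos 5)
Your proposal is correct; note, however, that the paper does not prove this lemma at all --- it is quoted with the citation \cite[Tag 00P3]{stacks-project} --- so what you supply is a self-contained argument where the paper outsources. Your route is exactly in the spirit of the paper's proofs of Lemma \ref{lemma: dim under morphisms} and Lemma \ref{lemma: dim and tensor}: reduce everything to the characterization of Proposition \ref{prop: dim and algebraic independence} applied to a finite generating set $A$ of $R$ over $k$ and to $A\otimes 1$, which generates $R\otimes_k k'$ over $k'$ (and is in bijection with $A$, since $R\to R\otimes_k k'$ is injective, $k'$ being free, hence faithfully flat, over $k$). Both directions of your key equivalence are sound: a nontrivial polynomial relation for $B$ over $k$ remains a nontrivial relation for $B\otimes 1$ over $k'$; and conversely, if $B=\{b_1,\dots,b_d\}$ is algebraically independent over $k$, flatness of $k'$ over $k$ turns the inclusion $k[B]\hookrightarrow R$ into an inclusion $k'[x_1,\dots,x_d]\cong k[B]\otimes_k k'\hookrightarrow R\otimes_k k'$ sending $x_i$ to $b_i\otimes 1$, which is precisely algebraic independence of $B\otimes 1$ over $k'$. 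So the maxima in Proposition \ref{prop: dim and algebraic independence} computed over $k$ and over $k'$ coincide, giving $\dim(R\otimes_k k')=\dim(R)$. What your approach buys is a proof using only the tools of the paper's recollection subsection, at the cost of reproving a standard fact; citing the Stacks Project, as the paper does, is of course equally legitimate.
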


Taking the quotient by the nilradical does not affect the Krull dimension:

\begin{lemma} \label{lemma: dim and nilradical}
Let $R$ be a finitely generated $k$-algebra and $R_{\operatorname{red}}=R/\sqrt{0}$ the quotient of $R$ by the nilradical $\sqrt{0}$ of $R$. Then $\dim(R_{\operatorname{red}})=\dim(R)$.
\end{lemma}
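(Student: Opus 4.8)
The claim is that $\dim(R_{\operatorname{red}}) = \dim(R)$ for a finitely generated $k$-algebra $R$. The plan is to reduce this to the already-established fact (Proposition~\ref{prop: dim and algebraic independence}) that the Krull dimension of a finitely generated $k$-algebra is detected by the size of the largest algebraically independent subset of a chosen generating set, or alternatively to invoke the basic commutative-algebra fact that the nilradical is contained in every prime ideal. Since the ingredients are so elementary, I expect the proof to be a couple of lines; the only ``obstacle'' is deciding which of the several equivalent routes to present.

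\textbf{Route one (prime ideals).} First I would recall that the prime ideals of $R_{\operatorname{red}} = R/\sqrt{0}$ are in inclusion-preserving bijection with the prime ideals of $R$ that contain $\sqrt{0}$; but every prime ideal of $R$ contains $\sqrt{0}$, since $\sqrt{0}$ is the intersection of all prime ideals of $R$. Hence the posets $\spec(R)$ and $\spec(R_{\operatorname{red}})$ are isomorphic, so the supremum of lengths of chains of primes agrees on both sides, giving $\dim(R_{\operatorname{red}}) = \dim(R)$ directly from the definition of Krull dimension. This route does not even need $R$ to be finitely generated, but stating it for finitely generated algebras suffices for our purposes.

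\textbf{Route two (generators and algebraic independence), which I would actually write.} Alternatively, using the framework already set up: pick a finite subset $A$ of $R$ with $R = k[A]$, and let $\overline{A}$ denote its image in $R_{\operatorname{red}}$, so that $R_{\operatorname{red}} = k[\overline{A}]$. I claim a subset $B \subseteq A$ is algebraically independent over $k$ in $R$ if and only if its image $\overline{B}$ is algebraically independent over $k$ in $R_{\operatorname{red}}$. The ``only if'' direction follows because a polynomial relation in $R_{\operatorname{red}}$ lifts to $R$ modulo a nilpotent element, and raising to a suitable power kills the nilpotent part; more precisely, if $p(\overline{B}) = 0$ in $R_{\operatorname{red}}$ with $p \in k[Y] \smallsetminus \{0\}$, then $p(B) \in \sqrt{0}$, so $p(B)^m = 0$ for some $m \geq 1$, and $p^m$ is a nonzero polynomial witnessing the algebraic dependence of $B$. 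The ``if'' direction is immediate since the surjection $R \to R_{\operatorname{red}}$ carries any algebraic relation among $B$ to one among $\overline{B}$. Thus the maximal size of an algebraically independent subset of $A$ equals that for $\overline{A}$, and Proposition~\ref{prop: dim and algebraic independence} yields $\dim(R) = \dim(R_{\operatorname{red}})$.

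\textbf{Where the difficulty lies.} There is essentially no difficulty here; the statement is a standard fact recorded for later reference. If pressed to name the one point requiring a word of care, it is the passage from ``$p(B)$ is nilpotent'' to ``some nonzero power $p^m$ vanishes on $B$,'' which uses finiteness of the nilpotency index of a single element — automatic in any commutative ring. I would present Route two for consistency with the surrounding lemmas, all of which are phrased via generating sets and Proposition~\ref{prop: dim and algebraic independence}.
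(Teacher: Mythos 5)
Your proposal is correct, and in fact it contains the paper's own proof as your ``Route one'': the paper's entire argument is the single sentence that the nilradical is contained in every prime ideal of $R$, so $\spec(R)$ and $\spec(R_{\operatorname{red}})$ are the same poset and the chain lengths agree (and, as you note, this needs no finiteness hypothesis at all). Your ``Route two'' is a genuinely different and also valid argument, reducing the statement to Proposition~\ref{prop: dim and algebraic independence} via the observation that $B\subseteq A$ is algebraically independent if and only if its image $\overline{B}$ is, using $p(B)\in\sqrt{0}\Rightarrow p(B)^m=0$ with $p^m\neq 0$ in $k[Y]$. The only point you gloss over in Route two is that distinct elements of an algebraically independent $B$ have distinct images in $R_{\operatorname{red}}$ (so that $|\overline{B}|=|B|$); this follows by the same nilpotency trick applied to $(y_1-y_2)^m$, so it is a cosmetic rather than substantive omission. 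What each approach buys: the paper's Route one is shorter, more general, and works straight from the definition of Krull dimension, while your Route two keeps the exposition uniform with the surrounding lemmas, all of which are phrased through generating sets and Proposition~\ref{prop: dim and algebraic independence}; either is acceptable, but Route one is the more economical choice here.
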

\begin{proof}
	The nilradical $\sqrt{0}$ is contained in every prime ideal of $R$.
\end{proof}

\subsection{Difference dimension of difference algebras}

We first show that the limit from Definition \ref{def: sdim for systems} exists. To achieve this we will use the following well-known elementary lemma. See, e.g., \cite[Prop. 10.7]{DenkerGrillenbergerSigmund:ErgodicTheoryOnCompactSpaces}.

%
%
%
%

\begin{lemma}[Fekete's Subadditive Lemma] \label{lemma:subadditive}
	If $(e_i)_{i\geq 1}$ is a sequence of non-negative real numbers that is subadditve, i.e., $e_{i+j}\leq e_i+e_j$ for all $i,j\geq 1$, then $\lim_{i\to\infty} \frac{e_i}{i}$ exists (inside $\mathbb{R}$) and is equal to $\inf \frac{e_i}{i}$.
\end{lemma}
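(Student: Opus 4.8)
The plan is to reproduce the classical proof of Fekete's lemma. First I would set $L=\inf_{i\geq 1}\frac{e_i}{i}$; since every $e_i$ is non-negative we have $L\geq 0$, so in particular $L\in\mathbb{R}$ and the infimum is a genuine real number. Because $\frac{e_i}{i}\geq L$ for every $i$, we trivially have $\liminf_{i\to\infty}\frac{e_i}{i}\geq L$, so it suffices to prove the reverse inequality for the $\limsup$, namely $\limsup_{i\to\infty}\frac{e_i}{i}\leq L$.

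Next I would fix $\varepsilon>0$ and choose $m\geq 1$ with $\frac{e_m}{m}<L+\varepsilon$. For $n>m$ write the Euclidean division $n=qm+r$ with $q\geq 1$ and $0\leq r<m$. Iterating the subadditivity hypothesis $q$ (resp.\ $q-1$) times gives $e_n\leq q\,e_m$ when $r=0$ and $e_n\leq q\,e_m+e_r$ when $1\leq r\leq m-1$. Dividing by $n$ and using $\tfrac{qm}{n}\leq 1$ together with $e_m\geq 0$ then yields
\[
\frac{e_n}{n}\ \leq\ \frac{e_m}{m}\cdot\frac{qm}{n}+\frac{e_r}{n}\ \leq\ \frac{e_m}{m}+\frac{e_r}{n}\ <\ L+\varepsilon+\frac{e_r}{n}.
\]
As $r$ ranges over the finite set $\{0,1,\ldots,m-1\}$ (with $e_0$ read as $0$), the numerator $e_r$ is bounded by $M=\max\{0,e_1,\ldots,e_{m-1}\}$, so $\frac{e_r}{n}\leq\frac{M}{n}\to 0$ as $n\to\infty$. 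Hence there is $N$ with $\frac{e_n}{n}<L+2\varepsilon$ for all $n\geq N$, which gives $\limsup_{n\to\infty}\frac{e_n}{n}\leq L+2\varepsilon$.

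Letting $\varepsilon\to 0$ shows $\limsup_{n\to\infty}\frac{e_n}{n}\leq L\leq\liminf_{n\to\infty}\frac{e_n}{n}$, so the limit exists and equals $L=\inf_{i\geq 1}\frac{e_i}{i}$. There is no serious obstacle in this argument; the only points requiring a little care are that the sequence is indexed from $1$, so the remainder case $r=0$ must either be handled separately or absorbed by setting $e_0=0$, and that the non-negativity hypothesis is exactly what prevents $L=-\infty$ and keeps the estimate $\frac{q e_m}{n}\leq\frac{e_m}{m}$ valid.
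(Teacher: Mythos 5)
Your proof is correct: it is the standard textbook argument for Fekete's lemma (fix $m$ nearly attaining the infimum, write $n=qm+r$, iterate subadditivity, and let the remainder term $e_r/n$ vanish), and you handle the only delicate points properly, namely the $r=0$ case and the role of non-negativity in keeping $L=\inf_{i\ge 1} e_i/i$ finite. The paper itself gives no proof of this lemma, citing it as well known (\cite[Prop.~10.7]{DenkerGrillenbergerSigmund:ErgodicTheoryOnCompactSpaces}), and your argument is exactly the classical one that such references use, so there is no divergence in approach to report.
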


%

The following theorem allows us to define a meaningful notion of $\s$-dimension for \emph{any} finitely $\s$-generated \ks-algebra.

\begin{theo} \label{theo:sdimdef}
	Let $R$ be a finitely $\s$-generated $k$-$\s$-algebra. Choose a finite subset $A$ of $R$ such that $R=k\{A\}$ and set
	$d_i=\dim(k[A,\ldots,\s^{i}(A)])$ for $i\geq 0$. Then the limit
	$$d=\lim_{i\to\infty}\frac{d_i}{i+1}$$ exists (inside $\R$) and does not depend on the choice of $A$.
\end{theo}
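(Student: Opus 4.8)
The plan is to establish two things: first, that the limit $\lim_{i\to\infty} d_i/(i+1)$ exists for a fixed choice of generating set $A$; second, that the resulting value is independent of the choice of $A$.

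\medskip

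\textbf{Existence of the limit.} For the first part I would apply Fekete's subadditive lemma (Lemma~\ref{lemma:subadditive}). Set $e_i = d_{i-1}$ for $i\geq 1$, so that $e_i = \dim(k[A,\sigma(A),\ldots,\sigma^{i-1}(A)])$. These are non-negative integers, so it remains to check subadditivity: $e_{i+j}\leq e_i+e_j$. Write $A_i = k[A,\ldots,\sigma^{i-1}(A)]$. The ring $A_{i+j}$ is generated by $A,\ldots,\sigma^{i+j-1}(A)$, which decomposes as the generators $A,\ldots,\sigma^{i-1}(A)$ of $A_i$ together with $\sigma^i(A),\ldots,\sigma^{i+j-1}(A)$. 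The latter set equals $\sigma^i(A_j)$ in the sense that $k[\sigma^i(A),\ldots,\sigma^{i+j-1}(A)] = \sigma^i(k[A,\ldots,\sigma^{j-1}(A)])$, and since $\sigma$ is a ring endomorphism this is a homomorphic image of $A_j$, hence has dimension at most $\dim(A_j) = e_j$ by Lemma~\ref{lemma: dim under morphisms}(ii) (more precisely, it is generated by $|A|\cdot j$ elements of which at most $e_j$ can be algebraically independent, by Proposition~\ref{prop: dim and algebraic independence}, because an algebraically independent subset of $\sigma^i(A_j)$ pulls back along $\sigma^i$ to an algebraically independent subset of $A_j$). Thus $A_{i+j}$ is generated by a set that is the union of a generating set of $A_i$ and a set in which at most $e_j$ elements are algebraically independent; by Proposition~\ref{prop: dim and algebraic independence} a maximal algebraically independent subset of the union has size at most $e_i + e_j$, giving $e_{i+j}\leq e_i + e_j$. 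Fekete's lemma then yields that $\lim_{i\to\infty} e_i/i = \lim_{i\to\infty} d_{i-1}/i = \lim_{i\to\infty} d_i/(i+1)$ exists and equals $\inf_i d_{i-1}/i$.

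\medskip

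\textbf{Independence of the generating set.} Let $A$ and $A'$ be two finite subsets of $R$ with $R = k\{A\} = k\{A'\}$, giving limits $d$ and $d'$. Since $R = k\{A\}$, every element of $A'$ lies in $k[A,\sigma(A),\ldots,\sigma^m(A)]$ for some $m$ (finitely many elements, take the max), and symmetrically every element of $A$ lies in $k[A',\ldots,\sigma^m(A')]$ for the same $m$ after enlarging. Applying $\sigma^i$, we get $\sigma^i(A')\subseteq k[\sigma^i(A),\ldots,\sigma^{i+m}(A)]$, hence
$$k[A',\ldots,\sigma^i(A')]\subseteq k[A,\sigma(A),\ldots,\sigma^{i+m}(A)].$$
This inclusion of finitely generated $k$-algebras gives, by Lemma~\ref{lemma: dim under morphisms}(i), $d_i' \leq d_{i+m}$, where $d_i' = \dim(k[A',\ldots,\sigma^i(A')])$. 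Dividing by $i+1$ and letting $i\to\infty$ (noting $d_{i+m}/(i+1) = \frac{i+m+1}{i+1}\cdot\frac{d_{i+m}}{i+m+1}\to d$) yields $d'\leq d$. The symmetric argument gives $d\leq d'$, hence $d = d'$.

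\medskip

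\textbf{Main obstacle.} The only genuinely delicate point is the subadditivity estimate, specifically the claim that $\dim(k[\sigma^i(A),\ldots,\sigma^{i+j-1}(A)])\leq d_{j-1}$. One must be careful that $\sigma$ need not be injective on $R$, so $\sigma^i(A_j)$ is only a quotient of $A_j$, not a subring; this is exactly why I phrase it via Lemma~\ref{lemma: dim under morphisms}(ii) (surjection decreases dimension) applied to the surjection $A_j \twoheadrightarrow k[\sigma^i(A),\ldots,\sigma^{i+j-1}(A)]$ induced by $\sigma^i$ (well-defined since $\sigma^i$ is a ring endomorphism of $R$ carrying the generators $\sigma^\ell(a)$ of $A_j$ to $\sigma^{i+\ell}(a)$). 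Everything else is a routine combination of Proposition~\ref{prop: dim and algebraic independence}, Lemma~\ref{lemma: dim under morphisms}, and Fekete's lemma.
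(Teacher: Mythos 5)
Your proof is essentially correct, but only because of your parenthetical argument; the claim you lead with, and repeat in your final paragraph, is false in general. The equality $k[\s^i(A),\ldots,\s^{i+j-1}(A)]=\s^i\big(k[A,\ldots,\s^{j-1}(A)]\big)$ fails unless $\s$ is surjective on $k$: the image of $\s^i$ is only the $\s^i(k)$-subalgebra $\s^i(k)[\s^i(A),\ldots,\s^{i+j-1}(A)]$, which may be a proper subring of the $k$-algebra whose dimension you want to bound. So the map induced by $\s^i$ is well-defined but is not a surjection onto $k[\s^i(A),\ldots,\s^{i+j-1}(A)]$, and Lemma \ref{lemma: dim under morphisms}(ii) does not apply as you invoke it; the delicate point is non-surjectivity of $\s$ on $k$, not (as your last paragraph says) non-injectivity of $\s$ on $R$. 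This is precisely why the paper's proof first replaces $k$ by its inversive closure $k^*$, after checking that the numbers $d_i$ are unchanged under $-\otimes_k k^*$, and only then argues via the surjection $\s^i$ together with the tensor-product surjection and Lemmas \ref{lemma: dim under morphisms} and \ref{lemma: dim and tensor}.

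That said, your ``more precisely'' argument is correct and self-contained: since $\s$ is injective on the field $k$, a nontrivial polynomial relation over $k$ among chosen preimages in $A\cup\ldots\cup\s^{j-1}(A)$ is carried by $\s^i$ to a nontrivial relation, with coefficients in $\s^i(k)\subseteq k$, among the corresponding elements of $\s^i(A)\cup\ldots\cup\s^{i+j-1}(A)$; hence any subset of the latter that is algebraically independent over $k$ has size at most $e_j$, and by Proposition \ref{prop: dim and algebraic independence} a maximal independent subset of the union of the two generating sets has size at most $e_i+e_j$, giving subadditivity. Taking this as the operative step, your proof goes through and is a genuinely different route from the paper's: it avoids the inversive-closure reduction and the tensor-product lemma entirely, at the modest cost of the pullback-of-independence observation. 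Your argument for independence of the generating set is essentially the paper's ($d_i'\le d_{i+m}$ via an honest inclusion of $k$-subalgebras, which is fine because $\s^i(k)\subseteq k$), with a slightly more direct limit computation than the paper's bound $d_{i+m}\le d_i+|A|m$; this is legitimate since existence of the limit for $A$ is already established.
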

\begin{proof}
	As the first step, we will show that we can assume without loss of generality that $k$ is inversive. Let $k^*$ denote the inversive closure of $k$ (\cite[Def. 2.1.6]{Levin}) and set $R'=R\otimes_k k^*$. Then $A'=\{a\otimes1 |\ a\in A\}$ $\s$-generates $R'$ over $k^*$. Set
	$d_i'=\dim(k^*[A',\ldots,\s^{i}(A')])$ for $i\geq 0$. As $k^*[A',\ldots,\s^{i}(A')]=k[A,\ldots,\s^{i}(A)]\otimes_k k^*$ we have $d_i=d'_i$ for $i\geq0$.
	So, we can assume that $k$ is inversive.

	To show that $\lim_{i\to\infty}\frac{d_i}{i+1}$ exists, it suffices to show that the sequence $(e_i)_{i\in\N}=(d_{i-1})_{i\in\N}$ is subadditive, because then $$\lim_{i\to\infty}\frac{d_i}{i+1}=\lim_{i\to\infty}\frac{d_{i-1}}{i}=\lim_{i\to\infty}\frac{e_i}{i}$$ exists by Lemma~\ref{lemma:subadditive}. Let $i,j\geq 1$. Since $k$ is inversive, the map $$\s^i\colon k[A,\ldots,\s^{j-1}(A)]\to k[\s^i(A),\ldots,\s^{i+j-1}(A)]$$ is surjective. Thus $\dim(k[\s^i(A),\ldots,\s^{i+j-1}(A)])\leq d_{j-1}=e_j$ by Lemma \ref{lemma: dim under morphisms} (ii). The canonical map
	$$k[A,\ldots,\s^{i-1}(A)]\otimes_k k[\s^i(A),\ldots,\s^{i+j-1}(A)]\longrightarrow k[A,\ldots,\s^{i+j-1}(A)]$$ is also surjective. Therefore, using Lemma \ref{lemma: dim under morphisms} (ii) and Lemma \ref{lemma: dim and tensor}, we find

	 $$e_{i+j}\leq e_i+\dim(k[\s^i(A),\ldots,\s^{i+j-1}(A)])\leq e_i+e_j.$$

	It remains to show that $d=\lim_{i\to\infty}\frac{d_i}{i+1}$ does not depend on the choice of the $\s$-generating set $A$. This is similar to \cite[Prop. A.24]{DiVizioHardouinWibmer:DifferenceGaloisofDifferential} but we include the argument for the sake of completeness. 
	So let $A'\subseteq R$ be another finite set such that $R=k\{A'\}$ and set
	$d_i'=\dim(k[A',\ldots,\s^{i}(A')])$ for $i\geq 0$. Then $A'\subseteq k[A,\ldots,\s^{j}(A)]$ for some $j\geq 0$ and therefore $k[A',\ldots, \s^{i}(A')]\subseteq k[A,\ldots,\s^{i+j}(A)]$. Thus $d_i'\leq d_{i+j}$ by Lemma \ref{lemma: dim under morphisms}(i).
	
 If $B$ is an algebraically independent subset of $A\cup\ldots\cup \s^{i+j}(A)$ such that $|B|=d_{i+j}$, then $B\cap (A\cup\ldots\cup\s^i(A))$ is an algebraically independent subset of $A\cup\ldots\cup\s^i(A)$ and therefore $|B\cap (A\cup\ldots\cup\s^i(A))|\leq d_i$ by Proposition \ref{prop: dim and algebraic independence}. Thus 
		$$d_{i+j}=|B|\leq |B\cap (A\cup\ldots\cup\s^i(A))|+|B\cap (\s^{i+1}(A)\cup\ldots\cup\s^{i+j}(A))|\leq d_i+|A|j.$$	
	So
	$$\frac{d'_i}{i+1}\leq\frac{d_{i+j}}{i+1}\leq\frac{d_i}{i+1}+\frac{|A|j}{i+1}.$$
	Since $\lim_{i\to\infty}\frac{|A|j}{i+1}=0$, it follows that $\lim_{i\to\infty}\frac{d'_i}{i+1}\leq \lim_{i\to\infty}\frac{d_i}{i+1}$.
\end{proof}

\begin{defi} \label{defi: sdim}
Let $R$ be a finitely $\s$-generated $k$-$\s$-algebra. The real number $d\geq 0$ defined in Theorem \ref{theo:sdimdef} above is called the \emph{$\s$-dimension} of $R$. We denote it by $\sdim(R).$
\end{defi}

We note that the idea to consider the sequence $\frac{d_i}{i+1}$ already appears in \cite[A~7]{DiVizioHardouinWibmer:DifferenceGaloisofDifferential}. There, the $\s$-dimension is defined as  $\lfloor\limsup_{i\to\infty} \frac{d_i}{i+1}\rfloor$ and it is shown (\cite[Prop.~A.24]{DiVizioHardouinWibmer:DifferenceGaloisofDifferential}) that $\limsup_{i\to\infty} \frac{d_i}{i+1}$ does not depend on the choice of the finite $\s$-generating set. Here $\lfloor x\rfloor$ is the floor of $x$, i.e., the largest integer not greater than $x$. Theorem \ref{theo:sdimdef} shows that there is no need to consider the limes superior since indeed the limit exists.

The floor of the limes superior was taken in \cite{DiVizioHardouinWibmer:DifferenceGaloisofDifferential} simply to obtain an integer value. The dimension of an algebraic variety is always an integer and so it may seem natural to also only allow integer values for the dimension in difference algebraic geometry. However, omitting the floor function makes the invariant stronger: Two difference algebras with distinct difference dimensions cannot be isomorphic and not using the floor allows us to recognize more difference algebras as non-isomorphic. 

Moreover, while non-integer values for the dimension may look unusual to the algebraist, in discrete dynamics, it is very common to consider numerical invariants that are not necessarily integers, for example, the topological entropy and the mean dimension need not be integers. In fact, our notion of difference dimension can be seen as an algebraic version of mean dimension. Mean dimension was first introduced by M. Gromov in \cite{Gromov:TopologicalInvariantsOfDynamicalSystemsAndSpacesOfHolomorphicMapsI}
and curiously enough, in Section 0.7 he writes: ``The present notion of mean dimension(s) arose from my attempts to geometrize the algebraic and model theoretic conceptions of dimensions over difference fields.''
We note that \cite{Gromov:TopologicalInvariantsOfDynamicalSystemsAndSpacesOfHolomorphicMapsI} is mainly concerned with compact metric spaces but as pointed out in Section 1.9.3 and remark \emph{On extension of Prodim to Nontoplogical Categories} right before Section 1.9.7 in \cite{Gromov:TopologicalInvariantsOfDynamicalSystemsAndSpacesOfHolomorphicMapsI}, some definitions and constructions there, also make sense in some algebraic categories. Our definition of difference dimension is more or less the same as the definition of projective dimension in \cite[Section 1.9]{Gromov:TopologicalInvariantsOfDynamicalSystemsAndSpacesOfHolomorphicMapsI}, a quantity closely related to the mean dimension.
To make the connection between the two definitions, note that in \cite{Gromov:TopologicalInvariantsOfDynamicalSystemsAndSpacesOfHolomorphicMapsI} the base difference field $k$ is assumed to be constant, i.e., $\s\colon k\to k$ is the identity map. To match the notation in the beginning of \cite[Section~1.9]{Gromov:TopologicalInvariantsOfDynamicalSystemsAndSpacesOfHolomorphicMapsI} replace the group $\Gamma$ there with the monoid $\N$ and set $\Omega_i=\{0,\ldots,i\}$ for $i\in \N$. Moreover, choose $\underline{X}=\A^n$ so that $X=\underline{X}^\Gamma=(\A^n)^\N$. For $F\subseteq k\{y_1,\ldots,y_n\}$ (as in Section \ref{sec: counting}) set $Y=\Sol^\A(F)\subseteq X$ and $Y|\Omega_i=\Sol^\A_{T_i}(F)$, where $T_i=\{0,\ldots,i\}\times \{1,\ldots,n\}$. Then
$$\operatorname{prodim}(Y : \{\Omega_i\})=\liminf_{i\to \infty}\dim(Y|\Omega_i)/|\Omega_i|$$
from \cite{Gromov:TopologicalInvariantsOfDynamicalSystemsAndSpacesOfHolomorphicMapsI} becomes the limit in our Definition \ref{def: sdim for systems}.

In Section \ref{sec: Comparison} below we will compare Definition \ref{defi: sdim} with other notions of dimension in difference algebra. In particular, we will show (Proposition \ref{prop: compare stredeg}) that $\sdim(R)$ agrees with the $\s$-transcendence degree over $k$ of the field of fractions of $R$ in case $R$ is an integral domain with $\s\colon R\to R$ injective.

We can now justify Definition \ref{def: sdim for systems}. 

\begin{cor} \label{cor: limit exists}
	Let $F\subseteq k\{y_1,\ldots,y_n\}$ and for $i\geq 0$ set
	$$d_i(F)=\max\big\{|T| \ |\ T\subseteq \{0,\ldots,i\}\times\{1,\ldots,n\} \text{ is free w.r.t. } F\big\}.$$
	Then $d=\lim_{i\to\infty}\frac{d_i(F)}{i+1}$
	exists.
\end{cor}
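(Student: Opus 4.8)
The plan is to reduce Corollary~\ref{cor: limit exists} to Theorem~\ref{theo:sdimdef} by identifying $d_i(F)$ with the quantity $d_i$ appearing in that theorem, applied to the $\s$-algebra $R=k\{y_1,\ldots,y_n\}/[F]$ with the natural $\s$-generating set $A=\{\overline{y_1},\ldots,\overline{y_n}\}$ (the images of the $\s$-variables). Once we know that $d_i(F)=\dim(k[A,\s(A),\ldots,\s^i(A)])$ for every $i\geq 0$, the existence of $\lim_{i\to\infty}\frac{d_i(F)}{i+1}$ is immediate from Theorem~\ref{theo:sdimdef}.

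First I would unwind the definitions. For $T_i=\{0,\ldots,i\}\times\{1,\ldots,n\}$ we have $k[y_{T_i}]=k[y_1,\ldots,y_n,\ldots,\s^i(y_1),\ldots,\s^i(y_n)]$, and the image of $y_{T_i}$ in $R=k\{y_1,\ldots,y_n\}/[F]$ generates exactly the subalgebra $k[A,\s(A),\ldots,\s^i(A)]$. Moreover, $k[y_{T_i}]\cap[F]$ is the kernel of the restriction to $k[y_{T_i}]$ of the quotient map $k\{y_1,\ldots,y_n\}\to R$, so
\begin{equation*}
k[A,\s(A),\ldots,\s^i(A)]\cong k[y_{T_i}]/(k[y_{T_i}]\cap[F]).
\end{equation*}
By Proposition~\ref{prop: dim and algebraic independence}, the Krull dimension of this algebra equals the largest size of a subset of $y_{T_i}$ whose image in $R$ is algebraically independent over $k$. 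Now the key point is that, by the equivalence of conditions (iii) and (iv) in Proposition~\ref{prop: characterize free} (applied with $T\subseteq T_i$), a subset $T$ of $T_i$ is free with respect to $F$ precisely when the image of $y_T$ in $k\{y_1,\ldots,y_n\}/[F]$ is algebraically independent over $k$. Hence
\begin{equation*}
d_i(F)=\max\{|T|\ |\ T\subseteq T_i \text{ free}\}=\max\{|B|\ |\ B\subseteq y_{T_i},\ B\text{ alg.\ indep.\ over }k\}=\dim(k[A,\s(A),\ldots,\s^i(A)]),
\end{equation*}
which is the $d_i$ of Theorem~\ref{theo:sdimdef} for the generating set $A$.

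I do not expect any serious obstacle here; the statement is essentially a bookkeeping corollary, and the only thing to be slightly careful about is making sure that ``free subset of $T_i$'' is being characterized via the \emph{same} ideal $[F]\cap k[y_T]$ that controls algebraic independence of $y_T$ modulo $[F]$ — but this is exactly what Proposition~\ref{prop: characterize free}(iii)$\Leftrightarrow$(iv) provides, once one notes that algebraic independence of a subset $B\subseteq y_{T_i}$ over $k$ depends only on which variables lie in $B$ and not on the ambient polynomial ring. With $d_i(F)=d_i$ established, Theorem~\ref{theo:sdimdef} gives that $\lim_{i\to\infty}\frac{d_i(F)}{i+1}$ exists, completing the proof; as a side remark this also shows $\sdim(F)=\sdim(k\{y_1,\ldots,y_n\}/[F])$, consistent with the claim made right after Definition~\ref{def: sdim for systems}.
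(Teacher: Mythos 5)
Your proposal is correct and follows essentially the same route as the paper: identify free subsets of $T_i$ with algebraically independent subsets of the image of $y_{T_i}$ in $k\{y_1,\ldots,y_n\}/[F]$ via Proposition \ref{prop: characterize free}, conclude $d_i(F)=\dim(k[A,\ldots,\s^i(A)])$ using Proposition \ref{prop: dim and algebraic independence}, and invoke Theorem \ref{theo:sdimdef}. The only difference is that you spell out the intermediate isomorphism $k[A,\ldots,\s^i(A)]\cong k[y_{T_i}]/(k[y_{T_i}]\cap[F])$ explicitly, which the paper leaves implicit.
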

\begin{proof}
	Set $R=k\{y_1,\ldots,y_n\}/[F]$ and let $A=\{a_1,\ldots,a_n\}$ denote the image of $\{y_1,\ldots,y_n\}$ in $R$. Recall (Proposition \ref{prop: characterize free}) that $T\subseteq \N\times\{1,\ldots,n\}$ is free with respect to $F$ if and only if $\{\s^i(a_j)|\ (i,j)\in T\}$ is algebraically independent over $k$.
	
	Therefore, Proposition \ref{prop: characterize free} implies $d_i(F)=\dim(k[A,\ldots,\s^i(A)])$ for $i\geq 0$ and the claim follows from Theorem~\ref{theo:sdimdef}.
\end{proof}

 Note that in Theorem \ref{theo:sdimdef} and Corollary \ref{cor: limit exists} the limit of the sequence is in fact the infimum of the sequence. This follows from Lemma \ref{lemma:subadditive} and the proofs of Theorem \ref{theo:sdimdef} and Corollary \ref{cor: limit exists}.
From the proof of Corollary \ref{cor: limit exists} we also obtain:

\begin{rem} \label{rem: cut at i}
	For $i\geq 0$ set $k\{y\}[i]=k[y_1,\ldots,y_n,\ldots,\s^i(y_1),\ldots,\s^i(y_n)]$ and for a $\s$-ideal $I$ of $k\{y_1,\ldots,y_n\}$ set $I[i]=I\cap k\{y\}[i]$. We have
	$$\sdim(I)=\sdim(k\{y_1,\ldots,y_n\}/I)=\lim_{i\to\infty}\frac{d_i}{i+1},$$
	where $d_i=\dim(k\{y\}[i]/I[i])$.
\end{rem}

\begin{ex}
	Let $R$ be a \ks-algebra that is finitely generated as a $k$-algebra. Then $\sdim(R)=0$. To see this, note that if $A$ generates $R$ as a $k$-algebra, then also $k\{A\}=R$ and so $d_i=\dim(R)$ for $i\geq 0$.
\end{ex}

The following proposition shows that our notion of $\s$-dimension generalizes the usual notion of dimension in algebraic geometry.
\begin{prop} \label{prop: sdim for algebraic}
	Let $F\subseteq k[y_1,\ldots,y_n]\subseteq k\{y_1,\ldots,y_n\}$ be a system of algebraic equations. Then $\sdim(F)$ equals the dimension of the algebraic variety defined by $F$.
\end{prop}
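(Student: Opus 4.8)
The plan is to compute the truncated dimensions $d_i=\dim(k\{y\}[i]/[F][i])$ of Remark~\ref{rem: cut at i} explicitly and show that $d_i=(i+1)\delta$ for all $i$, where $\delta=\dim(k[y_1,\ldots,y_n]/(F))$ is the dimension of the variety defined by $F$; then $\sdim(F)=\lim_{i\to\infty}(i+1)\delta/(i+1)=\delta$. At the very start I would set aside the case $\V(F)=\emptyset$: there $(F)$ is the unit ideal, hence $[F]=k\{y_1,\ldots,y_n\}$, and both sides equal $-\infty$; so from now on $\V(F)\neq\emptyset$ and $\delta\geq 0$.

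The structural point is that, because every $f\in F$ has order $0$, the $\s$-ideal $[F]=(F,\s(F),\ldots)$ \emph{decouples across the time slices}. Put $A_\ell=k[\s^\ell(y_1),\ldots,\s^\ell(y_n)]$, so that $k\{y\}[i]=A_0\otimes_k\cdots\otimes_k A_i$, and let $\mathfrak a_\ell\subseteq A_\ell$ be the ideal generated by $\s^\ell(F)$. Since $\s^\ell(F)\subseteq A_\ell$, the ideal $[F]$ of $k\{y\}$ is generated by $\bigcup_{\ell\geq 0}\s^\ell(F)$, and the heart of the argument is the identity
$$[F][i]\ =\ I_i\ :=\ \sum_{\ell=0}^{i}A_0\otimes\cdots\otimes\mathfrak a_\ell\otimes\cdots\otimes A_i .$$
The inclusion $I_i\subseteq[F][i]$ is immediate. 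For the converse I would argue one level up: for $N\geq i$ write $k\{y\}[N]=k\{y\}[i]\otimes_k Q$ with $Q=A_{i+1}\otimes_k\cdots\otimes_k A_N$; distributing the generators shows that the ideal of $k\{y\}[N]$ generated by $\bigcup_{\ell=0}^N\s^\ell(F)$ equals $(I_i\otimes_k Q)+(k\{y\}[i]\otimes_k\mathfrak c)$, where $\mathfrak c\subseteq Q$ is the ideal generated by $\bigcup_{\ell=i+1}^N\s^\ell(F)$. Its quotient is therefore $(k\{y\}[i]/I_i)\otimes_k(Q/\mathfrak c)$, and since $\V(F)\neq\emptyset$ the tensor factor $Q/\mathfrak c=\bigotimes_{\ell=i+1}^N A_\ell/\mathfrak a_\ell$ is a nonzero $k$-algebra, so the map $k\{y\}[i]\to(k\{y\}[i]/I_i)\otimes_k(Q/\mathfrak c)$ has kernel exactly $I_i$. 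As every element of $[F]$ lies in the ideal generated by $\bigcup_{\ell=0}^N\s^\ell(F)$ for $N$ large enough, intersecting with $k\{y\}[i]$ gives $[F]\cap k\{y\}[i]=I_i$.

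Granting this identity, $k\{y\}[i]/[F][i]\cong (A_0/\mathfrak a_0)\otimes_k\cdots\otimes_k(A_i/\mathfrak a_i)$, so iterating Lemma~\ref{lemma: dim and tensor} yields $d_i=\sum_{\ell=0}^i\dim(A_\ell/\mathfrak a_\ell)$. It remains to check $\dim(A_\ell/\mathfrak a_\ell)=\delta$ for every $\ell$. The endomorphism $\s^\ell\colon k\to k$ is injective with image a subfield $k'$, the polynomials in $\s^\ell(F)$ have coefficients in $k'$, and $A_\ell/\mathfrak a_\ell$ is the base change along the field extension $k'\subseteq k$ of the $k'$-algebra $k'[\s^\ell(y_1),\ldots,\s^\ell(y_n)]/(\s^\ell(F))$, which via the isomorphism $\s^\ell\colon k\to k'$ is isomorphic to $k[y_1,\ldots,y_n]/(F)$; hence $\dim(A_\ell/\mathfrak a_\ell)=\delta$ by Lemma~\ref{lemma: dim and base change}. (Equivalently, I would first reduce to $k$ inversive exactly as in the proof of Theorem~\ref{theo:sdimdef}, where $\s^\ell$ induces an isomorphism $A_0/\mathfrak a_0\cong A_\ell/\mathfrak a_\ell$ directly.) Thus $d_i=(i+1)\delta$ and $\sdim(F)=\delta$.

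I expect the displayed identity $[F][i]=I_i$ to be the main obstacle: one must verify that truncating the $\s$-ideal $[F]$ to its first $i+1$ time slices introduces no relations beyond the obvious ones. This is exactly where the order-$0$ hypothesis on $F$ is used, and it rests on the elementary fact that $(B\otimes_k C)/(\mathfrak a\otimes_k C+B\otimes_k\mathfrak c)\cong(B/\mathfrak a)\otimes_k(C/\mathfrak c)$ together with $C/\mathfrak c\neq 0$ — the latter being precisely the reason the empty-variety case has to be treated separately.
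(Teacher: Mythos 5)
Your proposal is correct and follows essentially the same route as the paper: the heart in both cases is the truncation identity $[F][i]=(F,\s(F),\ldots,\s^i(F))$, after which $d_i=(i+1)\delta$ follows from additivity of Krull dimension under tensor products and its invariance under base change along $\s^\ell\colon k\to k$. Your proof of the truncation identity (finite-level tensor decomposition plus injectivity of $B\to B\otimes_k C$ for $C\neq 0$) is just a mild repackaging of the paper's argument via reduction modulo the higher-order slices and $(I)\cap k[Y]=I$, with the added care of treating the case $\V(F)=\emptyset$ explicitly.
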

\begin{proof}
	Let $X$ be the algebraic variety defined by $F$ and $d=\dim(X)$. For $i\geq 0$, the algebraic variety defined by $\s^i(F)\subseteq k[\s^i(y_1),\ldots,\s^i(y_n)]$ is the base change of $X$ via $\s^i\colon k\to k$. In particular, it also has dimension $d$ (cf. Lemma \ref{lemma: dim and base change}). So
	$$(F,\s(F),\ldots,\s^i(F))\subseteq k[y_1,\ldots,y_n,\ldots,\s^i(y_1),\ldots,\s^i(y_n)]$$
	defines an $(i+1)$-fold product of varieties of dimension $d$, i.e., a variety of dimension $d(i+1)$ (cf. Lemma \ref{lemma: dim and tensor}).
	
	We next show that, with the notation of Remark \ref{rem: cut at i}, we have
	\begin{equation} \label{eq: order i}
	[F][i]=(F,\s(F),\ldots,\s^i(F))\subseteq  k\{y\}[i]
	\end{equation}
	 for all $i\in \N$. Clearly, $(F,\s(F),\ldots,\s^i(F))\subseteq [F][i]$. So let us establish the reverse inclusion. To this end, note that for a $k$-algebra $S$, a set of indeterminates $Y$ over $S$ and an ideal $I$ of $k[Y]$ one has $(I)\cap k[Y]=I$, where $(I)\subseteq S[Y]$ denotes the ideal of $S[Y]$ generated by $I$. (This follows from $S[Y]=S\otimes_k k[Y]$ and the fact that the tensor product has this property. See, e.g., \cite[Lemma 1.4.5 ]{DuascualescuNuastuasescu:HopfAlgebras}). We will apply this with  $S=k[\s^{i+1}(y_1),\ldots,\s^{i+1}(y_n),\s^{i+2}(y_1),\ldots]/(\s^{i+1}(F),\s^{i+2}(F),\ldots)$,
	  $Y=\{y_1,\ldots,y_n,\ldots,\s^i(y_1),\ldots,\s^i(y_n)\}$ and $I=(F,\ldots,\s^i(F))\subseteq k[Y]=k\{y\}[i]$. The image of any $h\in [F]$ in $S[Y]$ lies in $(I)\subseteq S[Y]$, because an element in $\s^j(F)$ ($j\geq i+1$) becomes zero in $S$. If, moreover, $h\in [F][i]$, then $h\in k[Y]$, and so $h\in (I)\cap k[Y]=I$. This proves (\ref{eq: order i}).

	Thus, if $A$ denotes the image of $\{y_1,\ldots,y_n\}$ in $k\{y_1,\ldots,y_n\}/[F]$, then
	$$k[A,\ldots,\s^i(A)]=k\{y\}[i]/[F][i]=k\{y\}[i]/(F,\ldots,\s^i(F))$$
	has dimension $d_i=d(i+1)$. Therefore $$\sdim(F)=\sdim(k\{A\})=\lim_{i\to\infty}\frac{d_i}{i+1}=d.$$	
\end{proof}

We will next establish some elementary properties of the $\s$-dimension which show that it behaves as one may expect from a notion of dimension. Most of these properties follow rather directly from the corresponding property of finitely generated algebras.

\begin{prop} \label{prop: morphism and sdim}
	Let $R$ and $S$ be finitely $\s$-generated $k$-$\s$-algebras.
	\begin{enumerate}
		\item If there exists an injective morphism $R\to S$ of $k$-$\s$-algebras, then $\sdim(R)\leq\sdim(S)$.
		\item If there exists a surjective morphism $R\to S$ of $k$-$\s$-algebras, then $\sdim(R)\geq\sdim(S)$.
	\end{enumerate}
\end{prop}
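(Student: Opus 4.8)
The strategy is to reduce both statements to the analogous facts about Krull dimension of finitely generated $k$-algebras, namely Lemma~\ref{lemma: dim under morphisms}, by picking compatible $\s$-generating sets and comparing the truncated algebras $k[A,\ldots,\s^i(A)]$ level by level. The only subtlety is that a $\s$-generating set of the source need not map onto a $\s$-generating set of the target at each fixed truncation level $i$; one must allow a shift by some fixed offset $j$, which washes out in the limit since $j/(i+1)\to 0$. This is exactly the device already used in the independence-of-$A$ part of Theorem~\ref{theo:sdimdef}.

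For part (i), let $\iota\colon R\hookrightarrow S$ be an injective $k$-$\s$-morphism. Choose a finite $A\subseteq R$ with $R=k\{A\}$ and a finite $B\subseteq S$ with $S=k\{B\}$. Since $\iota(A)\subseteq S=k\{B\}$ is finite, there is some $j\geq 0$ with $\iota(A)\subseteq k[B,\ldots,\s^j(B)]$, and since $\iota$ commutes with $\s$ this gives $\iota(k[A,\ldots,\s^i(A)])\subseteq k[B,\ldots,\s^{i+j}(B)]$ for every $i\geq 0$. As $\iota$ is injective, $\iota$ restricts to an injective morphism of finitely generated $k$-algebras $k[A,\ldots,\s^i(A)]\to k[B,\ldots,\s^{i+j}(B)]$, so Lemma~\ref{lemma: dim under morphisms}(i) yields $d_i^R\leq d_{i+j}^S$, where $d_i^R=\dim(k[A,\ldots,\s^i(A)])$ and $d_i^S=\dim(k[B,\ldots,\s^i(B)])$. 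Dividing by $i+1$ and letting $i\to\infty$, the term $d_{i+j}^S/(i+1)=\frac{i+j+1}{i+1}\cdot\frac{d_{i+j}^S}{i+j+1}$ tends to $\sdim(S)$, so $\sdim(R)\leq\sdim(S)$.

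For part (ii), let $\pi\colon R\twoheadrightarrow S$ be a surjective $k$-$\s$-morphism. Here the situation is cleaner: pick a finite $A\subseteq R$ with $R=k\{A\}$ and set $B=\pi(A)$, so $S=k\{B\}$. Because $\pi$ commutes with $\s$, it restricts to a surjection $k[A,\ldots,\s^i(A)]\twoheadrightarrow k[B,\ldots,\s^i(B)]$ for each $i$, whence $\dim(k[A,\ldots,\s^i(A)])\geq\dim(k[B,\ldots,\s^i(B)])$ by Lemma~\ref{lemma: dim under morphisms}(ii); dividing by $i+1$ and taking limits gives $\sdim(R)\geq\sdim(S)$. (No offset $j$ is needed in this direction.)

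\textbf{Main obstacle.} There is no deep obstacle; the proof is essentially bookkeeping. The one point that requires a moment's care is the shift argument in part~(i): one must make sure that the offset $j$ can be chosen \emph{uniformly} in $i$ — which it can, since $\iota(A)$ is a fixed finite set — and that the limiting argument correctly absorbs the reindexing $i\mapsto i+j$. Both are handled exactly as in the last paragraph of the proof of Theorem~\ref{theo:sdimdef}, and one could even cite that computation verbatim.
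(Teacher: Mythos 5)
Your proof is correct and follows essentially the same route as the paper: both parts reduce to Lemma~\ref{lemma: dim under morphisms} by comparing the truncated algebras $k[A,\ldots,\s^i(A)]$ level by level. The only cosmetic difference is in part (i), where the paper avoids your offset $j$ by extending (the image of) $A$ to a $\s$-generating set $B$ of $S$, which gives the inclusion $k[A,\ldots,\s^i(A)]\subseteq k[B,\ldots,\s^i(B)]$ at the same level $i$; your shift-by-$j$ argument, absorbed in the limit as in Theorem~\ref{theo:sdimdef}, is equally valid.
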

\begin{proof}
	(i): We may assume that $R$ is a \ks-subalgebra of $S$. Let $A$ be a finite $\s$-generating set for $R$. Then we can extend $A$ to a finite $\s$-generating set $B$ of $S$. For $i\geq 0$ we have $k[A,\ldots,\s^{i}(A)]\subseteq k[B,\ldots,\s^{i}(B)]$ and therefore,  using Lemma \ref{lemma: dim under morphisms} (i),

 $$\dim(k[A,\ldots,\s^{i}(A)])\leq\dim(k[B,\ldots,\s^{i}(B)]).$$
	Thus $\sdim(R)\leq\sdim(S)$.
	
	(ii): Let $A\subseteq R$ be finite such that $R=k\{A\}$ and let $\overline{A}$ denote the image of $A$ in $S$ under a surjective morphism. Then $k\{\overline{A}\}=S$. Since $k[A,\ldots,\s^{i}(A)]$ surjects onto $k[\overline{A},\ldots,\s^{i}(\overline{A})]$ for $i\geq 0$, we see, using Lemma \ref{lemma: dim under morphisms} (ii), that $$\dim(k[A,\ldots,\s^{i}(A)])\geq\dim(k[\overline{A},\ldots,\s^{i}(\overline{A})]),$$
	and therefore $\sdim(R)\geq \sdim(S)$.
\end{proof}

In terms of systems of algebraic difference equations Proposition \ref{prop: morphism and sdim} has the following interpretation:
\begin{cor} \label{cor: morphism and sdim}
	\begin{enumerate}\item If $F\subseteq k\{y_1,\ldots,y_n\}$ and $G\subseteq k\{y_1,\ldots,y_n,z_1,\ldots,z_m\}$ are such that $[G]\cap k\{y_1,\ldots,y_n\}=[F]$, then $\sdim(F)\leq\sdim(G)$.
	\item If $F,G\subseteq k\{y_1,\ldots,y_n\}$ are such that $[F]\subseteq [G]$ (e.g., $F\subseteq G$), then $\sdim(F)\geq\sdim(G)$.
\end{enumerate}
	 \qed
\end{cor}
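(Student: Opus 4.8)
The statement is Corollary \ref{cor: morphism and sdim}, which asks to translate Proposition \ref{prop: morphism and sdim} from the language of $\s$-algebras into the language of systems of algebraic difference equations. Since $\sdim(F)=\sdim(k\{y_1,\ldots,y_n\}/[F])$ by construction (see the remark after Definition \ref{def: sdim for systems} and Remark \ref{rem: cut at i}), both parts will follow by exhibiting an appropriate morphism of $k$-$\s$-algebras and invoking Proposition \ref{prop: morphism and sdim}.

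For part (i), set $R=k\{y_1,\ldots,y_n\}/[F]$ and $S=k\{y_1,\ldots,y_n,z_1,\ldots,z_m\}/[G]$. The inclusion $k\{y_1,\ldots,y_n\}\hookrightarrow k\{y_1,\ldots,y_n,z_1,\ldots,z_m\}$ is a morphism of $k$-$\s$-algebras, and it descends to a morphism $R\to S$ precisely because $[G]\cap k\{y_1,\ldots,y_n\}=[F]$: the preimage of $[G]$ under the inclusion is $[F]$, so the composite $k\{y_1,\ldots,y_n\}\to S$ has kernel exactly $[F]$, hence induces an \emph{injective} morphism $R\hookrightarrow S$ of $k$-$\s$-algebras. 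Proposition \ref{prop: morphism and sdim}(i) then gives $\sdim(F)=\sdim(R)\leq\sdim(S)=\sdim(G)$.

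For part (ii), with $[F]\subseteq[G]$ both $\s$-ideals of $k\{y_1,\ldots,y_n\}$, the identity on $k\{y_1,\ldots,y_n\}$ induces a surjective morphism of $k$-$\s$-algebras $k\{y_1,\ldots,y_n\}/[F]\twoheadrightarrow k\{y_1,\ldots,y_n\}/[G]$, so Proposition \ref{prop: morphism and sdim}(ii) yields $\sdim(F)=\sdim(k\{y_1,\ldots,y_n\}/[F])\geq\sdim(k\{y_1,\ldots,y_n\}/[G])=\sdim(G)$. The parenthetical case $F\subseteq G$ is immediate since then $[F]\subseteq[G]$.

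There is essentially no obstacle here: the only point requiring a line of care is checking that the hypothesis $[G]\cap k\{y_1,\ldots,y_n\}=[F]$ in (i) is exactly what is needed to get a \emph{well-defined and injective} induced map on the quotients, and that both $R$ and $S$ are finitely $\s$-generated (they are, being $\s$-generated by the images of the $\s$-variables), so that Proposition \ref{prop: morphism and sdim} applies. Everything else is a direct citation.
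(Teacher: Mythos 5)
Your proof is correct and is exactly the argument the paper intends: the corollary is stated with a \qed as an immediate consequence of Proposition \ref{prop: morphism and sdim} via the identification $\sdim(F)=\sdim(k\{y_1,\ldots,y_n\}/[F])$, and your verification that $[G]\cap k\{y_1,\ldots,y_n\}=[F]$ yields an injective induced map (resp.\ $[F]\subseteq[G]$ a surjective one) on the quotient $\s$-algebras is precisely the omitted routine check.
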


Like the Krull dimension of finitely generated algebras our $\s$-dimension is additive with respect to the tensor product.

\begin{prop} \label{prop: sdim and tensor products}
	Let $R$ and $S$ be finitely $\s$-generated $k$-$\s$-algebras. Then
	$$\sdim(R\otimes_k S)=\sdim(R)+\sdim(S).$$
\end{prop}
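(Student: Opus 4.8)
The plan is to reduce the statement to the corresponding additivity of Krull dimension for finitely generated algebras, namely Lemma \ref{lemma: dim and tensor}. First I would choose finite $\s$-generating sets $A\subseteq R$ and $B\subseteq S$, so that $R=k\{A\}$ and $S=k\{S\}$. The natural candidate for a finite $\s$-generating set of $R\otimes_k S$ is $C=\{a\otimes 1\mid a\in A\}\cup\{1\otimes b\mid b\in B\}$, and one checks immediately that $k\{C\}=R\otimes_k S$ since $\s(a\otimes 1)=\s(a)\otimes 1$ and $\s(1\otimes b)=1\otimes\s(b)$, so applying powers of $\s$ to $C$ produces all the $\s^i(a)\otimes 1$ and $1\otimes\s^i(b)$.

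The key computation is then that, for each $i\geq 0$,
\begin{equation*}
k[C,\ldots,\s^i(C)]=k[A,\ldots,\s^i(A)]\otimes_k k[B,\ldots,\s^i(B)].
\end{equation*}
This is because $k[C,\ldots,\s^i(C)]$ is generated by the elements $\s^j(a)\otimes 1$ and $1\otimes\s^j(b)$ for $0\leq j\leq i$, which is exactly the image of the natural map from the tensor product on the right; and that map is injective because $k[A,\ldots,\s^i(A)]$ is a $k$-subalgebra of $R$ and $k[B,\ldots,\s^i(B)]$ is a $k$-subalgebra of $S$, so their tensor product over the field $k$ maps injectively into $R\otimes_k S$ (tensoring the inclusions over a field preserves injectivity). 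Writing $d_i=\dim(k[A,\ldots,\s^i(A)])$, $e_i=\dim(k[B,\ldots,\s^i(B)])$, Lemma \ref{lemma: dim and tensor} gives $\dim(k[C,\ldots,\s^i(C)])=d_i+e_i$.

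Finally I divide by $i+1$ and pass to the limit:
\begin{equation*}
\sdim(R\otimes_k S)=\lim_{i\to\infty}\frac{d_i+e_i}{i+1}=\lim_{i\to\infty}\frac{d_i}{i+1}+\lim_{i\to\infty}\frac{e_i}{i+1}=\sdim(R)+\sdim(S),
\end{equation*}
where each of the three limits exists by Theorem \ref{theo:sdimdef} and the middle equality is the additivity of limits. I do not expect any serious obstacle here; the only point requiring a word of care is the identification $k[C,\ldots,\s^i(C)]\cong k[A,\ldots,\s^i(A)]\otimes_k k[B,\ldots,\s^i(B)]$, and in particular the injectivity of the multiplication map into $R\otimes_k S$, which rests on flatness of the field $k$ (equivalently, the fact cited in the proof of Lemma \ref{lemma: dim and tensor} that algebraic independence of $C'\subseteq C$ corresponds to simultaneous algebraic independence of its two pieces). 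One could alternatively bypass the explicit tensor identification and argue directly with algebraically independent subsets exactly as in the proof of Lemma \ref{lemma: dim and tensor}, applied level by level.
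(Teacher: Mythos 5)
Your proposal is correct and follows essentially the same route as the paper: take $C=\{a\otimes 1\}\cup\{1\otimes b\}$ as a $\s$-generating set, identify $k[C,\ldots,\s^i(C)]$ with $k[A,\ldots,\s^i(A)]\otimes_k k[B,\ldots,\s^i(B)]$, apply Lemma \ref{lemma: dim and tensor}, and pass to the limit. The extra justification you give for the injectivity of the tensor identification (flatness over the field $k$) is a detail the paper leaves implicit, but it is not a different argument.
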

\begin{proof}
Let $A$ and $B$ be finite $\s$-generating sets for $R$ and $S$ respectively. Then $C=\{a\otimes 1|\ a\in A\}\cup\{1\otimes b|\ b\in B\}$ is a finite $\s$-generating set for $R\otimes_k S$. Moreover, for $i\geq 0$ we have $k[C,\ldots,\s^{i}(C)]=k[A,\ldots,\s^{i}(A)]\otimes_k k[B,\ldots,\s^{i}(B)]$ and therefore, using Lemma~\ref{lemma: dim and tensor}, $$\dim(k[C,\ldots,\s^{i}(C)])=\dim(k[A,\ldots,\s^{i}(A)])+\dim(k[B,\ldots,\s^{i}(B)]).$$
\end{proof}
In terms of systems of algebraic difference equations, Proposition \ref{prop: sdim and tensor products} has the following interpretation:
\begin{cor}
 If $F\subseteq k\{y_1,\ldots,y_n\}$ and $G\subseteq k\{z_1,\ldots,z_m\}$, then $F\cup G\subseteq k\{y_1,\ldots,y_n,z_1,\ldots,z_m\}$ has $\s$-dimension $\sdim(F)+\sdim(G)$. \qed
\end{cor}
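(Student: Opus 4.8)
The final statement to prove is the corollary asserting that for $F\subseteq k\{y_1,\ldots,y_n\}$ and $G\subseteq k\{z_1,\ldots,z_m\}$, the union $F\cup G\subseteq k\{y_1,\ldots,y_n,z_1,\ldots,z_m\}$ satisfies $\sdim(F\cup G)=\sdim(F)+\sdim(G)$. The plan is to deduce this directly from Proposition \ref{prop: sdim and tensor products} via the standard identification of the relevant $\s$-algebra as a tensor product.

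First I would recall that by construction $\sdim(F\cup G)=\sdim\big(k\{y_1,\ldots,y_n,z_1,\ldots,z_m\}/[F\cup G]\big)$, $\sdim(F)=\sdim\big(k\{y_1,\ldots,y_n\}/[F]\big)$, and $\sdim(G)=\sdim\big(k\{z_1,\ldots,z_m\}/[G]\big)$. Next I would establish the key algebraic identity
\[
k\{y_1,\ldots,y_n,z_1,\ldots,z_m\}/[F\cup G]\;\cong\;\big(k\{y_1,\ldots,y_n\}/[F]\big)\otimes_k\big(k\{z_1,\ldots,z_m\}/[G]\big)
\]
as $k$-$\s$-algebras. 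This rests on the fact that the $\s$-polynomial ring in the combined variables is the tensor product $k\{y_1,\ldots,y_n\}\otimes_k k\{z_1,\ldots,z_m\}$ over $k$ (both sides being polynomial rings in the variables $\s^i(y_j)$ and $\s^i(z_l)$, with the $\s$-actions matching as described in the Notation subsection), together with the observation that the $\s$-ideal generated by $F\cup G$ in this tensor product is $[F]\otimes_k k\{z_1,\ldots,z_m\} + k\{y_1,\ldots,y_n\}\otimes_k [G]$, so that the quotient is the tensor product of the two quotients. One then applies Proposition \ref{prop: sdim and tensor products} with $R=k\{y_1,\ldots,y_n\}/[F]$ and $S=k\{z_1,\ldots,z_m\}/[G]$ to conclude.

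I expect the only mildly delicate point to be the verification that the $\s$-ideal identity is exactly right, i.e., that passing to the quotient by $[F\cup G]$ commutes with forming the tensor product; but this is the same bookkeeping already used implicitly in the proof of Proposition \ref{prop: sdim and tensor products} (where $k[C,\ldots,\s^i(C)]=k[A,\ldots,\s^i(A)]\otimes_k k[B,\ldots,\s^i(B)]$), and it follows from the flatness of $k$-modules together with the fact that the variables $y_j$ and $z_l$ are algebraically independent of each other. Since all the real work has been done in Proposition \ref{prop: sdim and tensor products}, this corollary is essentially a one-line translation and carries no genuine obstacle.
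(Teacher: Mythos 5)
Your proposal is correct and follows exactly the route the paper intends: the corollary is stated with an immediate \qed precisely because it is the reinterpretation of Proposition \ref{prop: sdim and tensor products} via the identification $k\{y_1,\ldots,y_n,z_1,\ldots,z_m\}/[F\cup G]\cong (k\{y_1,\ldots,y_n\}/[F])\otimes_k (k\{z_1,\ldots,z_m\}/[G])$, which you spell out. Your verification that $[F\cup G]=[F]\otimes_k k\{z_1,\ldots,z_m\}+k\{y_1,\ldots,y_n\}\otimes_k[G]$ and that the quotient is the tensor product of the quotients is the only bookkeeping needed, and it is sound.
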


The following proposition shows that our notion of $\s$-dimension is well-behaved under extension of the base $\s$-field (cf. \cite[Lemma A.27]{DiVizioHardouinWibmer:DifferenceGaloisofDifferential}).
\begin{prop} \label{prop: sdim and base extension}
	Let $R$ be a finitely $\s$-generated $k$-$\s$-algebra. Let $k'$ be a $\s$-field extension of $k$ and consider $R'=R\otimes_k k'$ as a $k'$-$\s$-algebra. Then
	$$\sdim(R')=\sdim(R).$$
\end{prop}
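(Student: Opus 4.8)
The plan is to reduce the statement to the invariance of ordinary Krull dimension under base change (Lemma \ref{lemma: dim and base change}), applied level by level. First I would fix a finite $\s$-generating set $A$ of $R$ over $k$. Then $A'=\{a\otimes 1\mid a\in A\}$ is a finite $\s$-generating set of $R'=R\otimes_k k'$ over $k'$. The key observation is the identity of $k'$-algebras
$$k'[A',\ldots,\s^i(A')]=k[A,\ldots,\s^i(A)]\otimes_k k'$$
for every $i\geq 0$. This holds because tensoring with $k'$ is compatible with forming subalgebras generated by a prescribed set of elements: the left-hand side is the $k'$-subalgebra of $R'$ generated by $A'\cup\ldots\cup\s^i(A')$, and since $R'=R\otimes_k k'$ and the elements $\s^j(a)\otimes 1$ generate it over the right-hand side, both sides coincide. (One should be a little careful here since $R$ need not be a domain and $\s$ need not be injective, but the statement is purely about the subring generated by finitely many elements, so no flatness issue arises.)

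Next, applying Lemma \ref{lemma: dim and base change} to the finitely generated $k$-algebra $k[A,\ldots,\s^i(A)]$ and the field extension $k'/k$, we get
$$\dim\!\big(k'[A',\ldots,\s^i(A')]\big)=\dim\!\big(k[A,\ldots,\s^i(A)]\otimes_k k'\big)=\dim\!\big(k[A,\ldots,\s^i(A)]\big)=d_i$$
for all $i\geq 0$, where $d_i$ is the sequence computing $\sdim(R)$ in Theorem \ref{theo:sdimdef}. Since by Theorem \ref{theo:sdimdef} the limit defining the $\s$-dimension is independent of the choice of finite $\s$-generating set, we may compute $\sdim(R')$ using $A'$, and obtain
$$\sdim(R')=\lim_{i\to\infty}\frac{\dim\!\big(k'[A',\ldots,\s^i(A')]\big)}{i+1}=\lim_{i\to\infty}\frac{d_i}{i+1}=\sdim(R).$$

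I do not expect any serious obstacle here; this is one of the properties that, as the paper says, follows rather directly from the corresponding property of finitely generated algebras. The only point requiring a line of justification is the base-change identity for the finitely $\s$-generated subalgebras at each level $i$; everything else is an immediate invocation of Lemma \ref{lemma: dim and base change} together with the independence of the limit from the chosen $\s$-generating set established in Theorem \ref{theo:sdimdef}.
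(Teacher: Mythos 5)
Your proposal is correct and follows essentially the same route as the paper: choose a finite $\s$-generating set $A$, observe that $A'=\{a\otimes 1\mid a\in A\}$ $\s$-generates $R'$, identify $k'[A',\ldots,\s^i(A')]$ with $k[A,\ldots,\s^i(A)]\otimes_k k'$ (which is harmless since $k'$ is flat over the field $k$), and apply Lemma \ref{lemma: dim and base change} termwise before passing to the limit via Theorem \ref{theo:sdimdef}. No gaps.
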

\begin{proof}
	If $A\subseteq R$ is a finite $\s$-generating set for the \ks-algebra $R$, then $A'=\{a\otimes 1|\ a\in A\}$ is a finite $\s$-generating set for the $k'$-$\s$-algebra $R'$. Moreover, $\dim(k[A,\ldots,\s^{i}(A)])=\dim(k'[A',\ldots,\s^{i}(A')])$ for $i\geq 0$ by Lemma~\ref{lemma: dim and base change} since $k'[A',\ldots,\s^{i}(A')]=k[A,\ldots,\s^{i}(A)]\otimes_k k'$ .
\end{proof}

In terms of systems of algebraic difference equations, Proposition \ref{prop: sdim and base extension} has the following interpretation:
\begin{cor}
 Let $k'$ be a $\s$-field extension $k$ and $F\subseteq k\{y_1,\ldots,y_n\}$. Then the $\s$-dimension of $F$ considered as a subset of $k\{y_1,\ldots,y_n\}$ agrees with the $\s$-dimensions of $F$ considered as a subset of $k'\{y_1,\ldots,y_n\}$.\qed
\end{cor}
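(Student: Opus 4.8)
The statement to prove is the corollary that $\sdim(F)$ is invariant under extension of the base $\sigma$-field, which is an immediate translation of Proposition~\ref{prop: sdim and base extension}. The plan is to unwind the definitions and apply that proposition directly.

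First I would record the dictionary between systems and algebras. For $F\subseteq k\{y_1,\ldots,y_n\}$, by construction $\sdim(F)=\sdim(k\{y_1,\ldots,y_n\}/[F])$; this is noted right after Definition~\ref{def: sdim for systems} and again in the opening of Section~\ref{sec: The difference dimension}. Writing $R=k\{y_1,\ldots,y_n\}/[F]$ for the $k$-$\sigma$-algebra attached to $F$ over $k$, and $R'=k'\{y_1,\ldots,y_n\}/[F]_{k'}$ for the $k'$-$\sigma$-algebra attached to $F$ over $k'$ (where $[F]_{k'}$ is the $\sigma$-ideal generated by $F$ in $k'\{y_1,\ldots,y_n\}$), the claim is exactly $\sdim(R')=\sdim(R)$.

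Second, I would identify $R'$ with $R\otimes_k k'$. Since $k'\{y_1,\ldots,y_n\}=k\{y_1,\ldots,y_n\}\otimes_k k'$ as $k'$-$\sigma$-algebras, and the $\sigma$-ideal generated by $F$ in this ring is $[F]\otimes_k k'$ (the image of $F$ generates, as a $\sigma$-ideal, the same ideal whether we first extend scalars or first form $[F]$ and then extend, because $\otimes_k k'$ is exact and commutes with the operations $\sigma^i$ used to build $[F]$), we get
$$R'=k\{y_1,\ldots,y_n\}\otimes_k k'\big/\big([F]\otimes_k k'\big)\cong \big(k\{y_1,\ldots,y_n\}/[F]\big)\otimes_k k'=R\otimes_k k'.$$
This identification is a morphism of $k'$-$\sigma$-algebras, where the $\sigma$-structure on $R\otimes_k k'$ is the tensor-product one $\sigma(r\otimes\lambda)=\sigma(r)\otimes\sigma(\lambda)$.

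Third, I would invoke Proposition~\ref{prop: sdim and base extension} with this $R$ and $k'$, which gives $\sdim(R\otimes_k k')=\sdim(R)$, hence $\sdim(R')=\sdim(R)$, i.e.\ $\sdim_{k'}(F)=\sdim_k(F)$. The only genuinely non-routine point is the identification $R'\cong R\otimes_k k'$ as $\sigma$-rings — specifically checking that forming $[F]$ commutes with the flat base change $-\otimes_k k'$; but this is a standard fact (the same fact used in the proof of Proposition~\ref{prop: sdim for algebraic} to handle $[F][i]$, and in Lemma~\ref{lemma: Zariski dense}), so there is no real obstacle. Everything else is bookkeeping.
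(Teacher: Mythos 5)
Your proposal is correct and is exactly the argument the paper intends: the corollary is stated with \qed as an immediate consequence of Proposition \ref{prop: sdim and base extension}, via the dictionary $\sdim(F)=\sdim(k\{y_1,\ldots,y_n\}/[F])$ and the identification $k'\{y_1,\ldots,y_n\}/[F]_{k'}\cong\big(k\{y_1,\ldots,y_n\}/[F]\big)\otimes_k k'$ as $k'$-$\s$-algebras. Your verification that forming $[F]$ commutes with the (flat) base change $-\otimes_k k'$ is the right point to check and is handled correctly.
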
 

For a $\s$-ring $R$, the nilradical $\sqrt{0}\subseteq R$ of $R$ is a $\s$-ideal. Therefore $R_{\operatorname{red}}:=R/\sqrt{0}$ has naturally the structure of a $\s$-ring. As in commutative algebra, passing from $R$ to $R_{\operatorname{red}}$ does not affect the dimension:
\begin{prop} \label{prop: sdim and radical}
	Let $R$ be a finitely $\s$-generated $k$-$\s$-algebra. Then
	$$\sdim(R_{\operatorname{red}})=\sdim(R).$$
\end{prop}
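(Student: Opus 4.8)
The plan is to reduce the statement to the classical fact that taking the quotient by the nilradical does not change the Krull dimension (Lemma \ref{lemma: dim and nilradical}), applied level by level in the order filtration. First I would pick a finite $\s$-generating set $A$ of $R$ and let $\overline{A}$ denote its image in $R_{\operatorname{red}}=R/\sqrt{0}$; since $R\to R_{\operatorname{red}}$ is a surjective morphism of $k$-$\s$-algebras, $\overline{A}$ is a finite $\s$-generating set of $R_{\operatorname{red}}$. Now for each $i\geq 0$ set $S_i=k[A,\ldots,\s^i(A)]\subseteq R$ and $\overline{S}_i=k[\overline{A},\ldots,\s^i(\overline{A})]\subseteq R_{\operatorname{red}}$; these are finitely generated $k$-algebras, and $\overline{S}_i$ is the image of $S_i$ under $R\to R_{\operatorname{red}}$.

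The key step is to identify $\overline{S}_i$ with $(S_i)_{\operatorname{red}}=S_i/\sqrt{0}$, where $\sqrt{0}$ is now the nilradical of the ring $S_i$. Indeed $\overline{S}_i=S_i/(S_i\cap\sqrt{0}_R)$, where $\sqrt{0}_R$ is the nilradical of $R$, and $S_i\cap\sqrt{0}_R$ is precisely the set of nilpotent elements of $S_i$, i.e., the nilradical of $S_i$. Hence $\overline{S}_i\cong (S_i)_{\operatorname{red}}$, and Lemma \ref{lemma: dim and nilradical} gives $\dim(\overline{S}_i)=\dim(S_i)$ for every $i\geq 0$. Taking the limit over $i$ of $\dim(\overline{S}_i)/(i+1)$ and $\dim(S_i)/(i+1)$, both of which exist by Theorem \ref{theo:sdimdef}, we obtain $\sdim(R_{\operatorname{red}})=\sdim(R)$.

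I do not expect any serious obstacle here; the only point requiring a moment's care is the identification $S_i\cap\sqrt{0}_R=\sqrt{0}_{S_i}$, which is immediate since nilpotency is an intrinsic notion (an element of $S_i$ is nilpotent in $S_i$ if and only if it is nilpotent in $R$). Alternatively, one can avoid even this by quoting Proposition \ref{prop: morphism and sdim}(ii): the surjection $R\to R_{\operatorname{red}}$ yields $\sdim(R)\geq\sdim(R_{\operatorname{red}})$, and for the reverse inequality one uses that each $\overline{S}_i$ is a quotient of $S_i$ together with $\dim(\overline{S}_i)=\dim(S_i)$ from Lemma \ref{lemma: dim and nilradical}. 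Either way the argument is a routine level-by-level transfer of the corresponding classical fact.
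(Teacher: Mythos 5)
Your proof is correct and follows essentially the same route as the paper: choose a finite $\s$-generating set $A$, identify $k[\overline{A},\ldots,\s^i(\overline{A})]$ with $k[A,\ldots,\s^i(A)]_{\operatorname{red}}$ (the paper asserts this identification without comment; your remark that nilpotency is intrinsic to the subring is exactly the justification), and apply Lemma \ref{lemma: dim and nilradical} levelwise before passing to the limit. No gaps.
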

\begin{proof}
	Let $A\subseteq R$ be a finite $\s$-generating set for $R$ and let $\overline{A}$ denote the image of $A$ in $R_{\operatorname{red}}$. Then $\overline{A}$ is a finite $\s$-generating set for  $R_{\operatorname{red}}$ and
	$k[\overline{A},\ldots,\s^{i}(\overline{A})]=k[A,\ldots,\s^{i}(A)]_{\operatorname{red}}$ for $i\geq 0$. Therefore $\dim(k[\overline{A},\ldots,\s^{i}(\overline{A})])=\dim(k[A,\ldots,\s^{i}(A)])$  by Lemma \ref{lemma: dim and nilradical}.
\end{proof}	

In terms of systems of algebraic difference equations Proposition \ref{prop: sdim and radical} can be reinterpreted as:
\begin{cor} \label{cor: radical}
	Let $F\subseteq k\{y_1,\ldots,y_n\}$. Then
	$$\sdim(F)=\sdim([F])=\sdim(\sqrt{[F]}).$$	\qed
\end{cor}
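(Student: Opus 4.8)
The plan is to obtain both equalities as formal consequences of Proposition~\ref{prop: sdim and radical}, after matching the two descriptions of the $\s$-dimension: the one via free sets from Section~\ref{sec: counting} and the one via quotient $\s$-algebras from Definition~\ref{defi: sdim}. The equality $\sdim(F)=\sdim([F])$ is essentially immediate: by Corollary~\ref{cor: limit exists} (see also Remark~\ref{rem: cut at i}) we have $\sdim(F)=\sdim\big(k\{y_1,\ldots,y_n\}/[F]\big)$, and since $[F]$ is already a $\s$-ideal, $[[F]]=[F]$, so $\sdim([F])=\sdim\big(k\{y_1,\ldots,y_n\}/[[F]]\big)=\sdim(F)$. (Indeed, $\sdim(F)=\sdim([F])$ was already observed right after Definition~\ref{def: sdim for systems}.)

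For the remaining equality I would set $R=k\{y_1,\ldots,y_n\}/[F]$, a finitely $\s$-generated $k$-$\s$-algebra, with quotient map $\pi\colon k\{y_1,\ldots,y_n\}\to R$. The key observation is that $\sqrt{[F]}=\pi^{-1}(\sqrt{0})$, where $\sqrt{0}\subseteq R$ is the nilradical: for $f\in k\{y_1,\ldots,y_n\}$ one has $f^m\in[F]$ for some $m\geq 1$ if and only if $\pi(f)^m=0$ in $R$. In particular $\sqrt{[F]}$ is a $\s$-ideal (consistently with the fact, recorded just before Proposition~\ref{prop: sdim and radical}, that the nilradical of a $\s$-ring is a $\s$-ideal), so the notation $\sdim(\sqrt{[F]})$ is legitimate and, by Remark~\ref{rem: cut at i}, $\sdim(\sqrt{[F]})=\sdim\big(k\{y_1,\ldots,y_n\}/\sqrt{[F]}\big)$. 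The canonical isomorphism of $\s$-rings $k\{y_1,\ldots,y_n\}/\sqrt{[F]}\cong R/\sqrt{0}=R_{\operatorname{red}}$ then gives $\sdim(\sqrt{[F]})=\sdim(R_{\operatorname{red}})$, and Proposition~\ref{prop: sdim and radical} yields $\sdim(R_{\operatorname{red}})=\sdim(R)=\sdim([F])$, completing the chain $\sdim(F)=\sdim([F])=\sdim(\sqrt{[F]})$.

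Since every step is a direct invocation of a result already proved, there is no genuine obstacle; the only points that deserve a word of care are the identification $\sqrt{[F]}=\pi^{-1}(\sqrt 0)$ (so that passing to the radical corresponds precisely to passing to $R_{\operatorname{red}}$) and the bookkeeping needed to pass between $\sdim(F)$, $\sdim([F])$, $\sdim(\sqrt{[F]})$ and the $\s$-dimension of the corresponding quotient $\s$-algebras, which is exactly furnished by Corollary~\ref{cor: limit exists} and Remark~\ref{rem: cut at i}.
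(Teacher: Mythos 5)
Your argument is correct and is precisely the route the paper intends: the corollary is stated as the reinterpretation, via the identifications $\sdim(F)=\sdim([F])=\sdim(k\{y_1,\ldots,y_n\}/[F])$ and $k\{y_1,\ldots,y_n\}/\sqrt{[F]}\cong R_{\operatorname{red}}$, of Proposition~\ref{prop: sdim and radical}, which is why the paper leaves the proof implicit. Your extra care about $\sqrt{[F]}=\pi^{-1}(\sqrt{0})$ and the bookkeeping via Remark~\ref{rem: cut at i} just makes explicit what the paper takes as evident.
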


%
%

\section{Comparison with other notions of dimension} \label{sec: Comparison}

In this section we compare our notion of $\s$-dimension with two other notions in the literature. Firstly, we show that our notion generalizes the standard definition via $\s$-transcendence bases. Secondly, we show that our $\s$-dimension is an upper bound for the difference Krull dimension.

Let us first recall some basic facts about the $\s$-transcendence degree (\cite[Section~4.1]{Levin}). Let $R$ be a \ks-algebra. A subset $A$ of $R$ is \emph{$\s$-algebraically independent} (over $k$) if the family $(\s^i(a))_{a\in A, i\in\N}$ is algebraically independent over $k$. If $K$ is a $\s$-field extension of $k$, a maximal $\s$-algebraically independent subset is called a \emph{$\s$-transcendence basis} of $K/k$. Any two $\s$-transcendence bases have the same cardinality, which is called the \emph{$\s$-transcendence degree} of $K/k$.

Also recall that a $\s$-ideal $I$ of a $\s$-ring $R$ is \emph{reflexive} if $\s^{-1}(I)=I$. (This implies that $\s\colon R/I\to R/I$ is injective.)
In \cite[Definition 4.2.21]{Levin} the difference dimension of a prime reflexive $\s$-ideal $I$ of $k\{y_1,\ldots,y_n\}$ is defined as the $\s$-transcendence degree of the fraction field of $k\{y_1,\ldots,y_n\}/I$ over $k$. (We will see in a moment that our $\sdim(I)$ agrees with this definition, so there is no ambiguity with the naming.)

The following proposition shows that our definition of $\s$-dimension agrees with the classical definition whenever the latter applies, i.e., when $R$ is an integral domain with $\s\colon R\to R$ injective (cf. \cite[Lemma A.26]{DiVizioHardouinWibmer:DifferenceGaloisofDifferential}).
\begin{prop} \label{prop: compare stredeg}
	Let $R$ be a finitely $\s$-generated $k$-$\s$-algebra. Assume that $R$ is an integral domain. Then $\sdim(R)$ equals the largest integer $n$ such that there exist $n$ $\s$-algebraically independent elements inside $R$. Moreover, if $\s\colon R\to R$ is injective, $\sdim(R)$ equals the $\s$-transcendence degree of the field of fractions of $R$ over $k$.
\end{prop}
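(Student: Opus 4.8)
The plan is to prove the two assertions in turn, reducing everything to Theorem \ref{theo:sdimdef} and the recollections on Krull dimension.

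\medskip

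\textbf{Step 1: Reduce the ``largest integer $n$'' statement to a count of $\s$-algebraically independent elements among a $\s$-generating set.} Fix a finite $\s$-generating set $A=\{a_1,\ldots,a_m\}$ of $R$ and set $d_i=\dim(k[A,\ldots,\s^i(A)])$, so $\sdim(R)=\lim_i d_i/(i+1)$. By Proposition \ref{prop: dim and algebraic independence}, since $k[A,\ldots,\s^i(A)]$ is an integral domain, $d_i=\trdeg$ of its fraction field over $k$. Thus $d_i$ counts the size of a maximal algebraically independent subset of $\{\s^j(a_l)\mid 0\le j\le i,\ 1\le l\le m\}$. If $B\subseteq R$ is a set of $n$ $\s$-algebraically independent elements, write each element of $B$ as a $\s$-polynomial over $k$ in the $\s^j(a_l)$; then all of $B$ together with its $\s$-transforms up to some level lies in $k[A,\ldots,\s^{i+c}(A)]$ once $B\subseteq k[A,\ldots,\s^c(A)]$, giving roughly $n(i+1)\le d_{i+c}$, and dividing by $i+1$ and letting $i\to\infty$ yields $n\le\sdim(R)$. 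Conversely, I want to produce $\lceil\sdim(R)\rceil$ (in fact $\lfloor\sdim(R)\rfloor$, but since the claim is that $\sdim(R)$ \emph{is} this integer, I must also show $\sdim(R)\in\Z$) many $\s$-algebraically independent elements; see Step 3 for why the limit is an integer here.

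\medskip

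\textbf{Step 2: The injective case via $\s$-transcendence degree.} Assume $\s\colon R\to R$ is injective and let $K=\operatorname{Frac}(R)$, with $\s$ extended to $K$. A $\s$-transcendence basis of $K/k$ can be chosen inside $R$ (clear denominators), and its $\s$-transforms are algebraically independent over $k$. If $r=\sdeg$ is the $\s$-transcendence degree, pick $\s$-algebraically independent $b_1,\ldots,b_r\in R$; then as in Step 1, $r\le\sdim(R)$. For the reverse inequality $\sdim(R)\le r$: every generator $a_l$ is $\s$-algebraic over $k(b_1,\ldots,b_r)$, so there is an integer $e$ with $\s^j(a_l)$ algebraic over $k\big(\s^p(b_t)\mid 0\le p\le j+e,\ 1\le t\le r\big)$ for all $j$ and $l$ — here I use that $\s$-algebraic dependence of $a_l$ over $k\langle b_1,\ldots,b_r\rangle$ means $a_l$ satisfies a $\s$-polynomial relation of some bounded order $e$, and then $\s^j$ applied to that relation shows $\s^j(a_l)$ is algebraic over the $b_t$ up to level $j+e$. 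Consequently $k[A,\ldots,\s^i(A)]$ has fraction field algebraic over $k(\s^p(b_t)\mid 0\le p\le i+e,\ 1\le t\le r)$, so $d_i\le r(i+e+1)$; dividing by $i+1$ and taking the limit gives $\sdim(R)\le r$. Hence $\sdim(R)=r\in\Z$, which also settles the integrality needed to close Step 1 in this case.

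\medskip

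\textbf{Step 3: Remove the injectivity hypothesis for the first assertion.} For general integral domain $R$, let $\p=\bigcup_n \ker(\s^n\colon R\to R)$, a prime $\s$-ideal (it is prime since $R$ is a domain, and $\s$-stable), and set $\overline R=R/\p$; then $\s\colon\overline R\to\overline R$ is injective and $\overline R$ is a finitely $\s$-generated $k$-$\s$-domain. I claim $\sdim(R)=\sdim(\overline R)$: the surjection $R\twoheadrightarrow\overline R$ gives $\sdim(R)\ge\sdim(\overline R)$ by Proposition \ref{prop: morphism and sdim}(ii), and for the other direction I compare the $d_i$ directly — modding out by $\p$ only kills nilpotent-type information along $\s$, and one checks $\dim(k[\overline A,\ldots,\s^i(\overline A)])$ and $\dim(k[A,\ldots,\s^i(A)])$ differ by a $o(i)$ amount (each element of $\p\cap k[A,\ldots,\s^i(A)]$ dies under some bounded power of $\s$, so it cannot cut down the transcendence degree by more than a bounded-per-step amount). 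Then by Step 2 applied to $\overline R$, $\sdim(R)=\sdim(\overline R)$ equals the $\s$-transcendence degree of $\operatorname{Frac}(\overline R)/k$, an integer, and this integer is visibly the maximal number of $\s$-algebraically independent elements of $\overline R$, hence of $R$ (lifting such a set along $R\to\overline R$ keeps it $\s$-algebraically independent, and a $\s$-algebraically independent set of $R$ maps to one in $\overline R$ only after checking its image is still independent — which holds because a dependence relation in $\overline R$ lifts to one valid after applying a power of $\s$, contradicting independence in $R$).

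\medskip

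\textbf{Main obstacle.} The delicate point is Step 3: controlling how passing to $R/\p$ (equivalently, the failure of injectivity of $\s$) affects the sequence $d_i$. One must show the discrepancy is sublinear in $i$ so that it vanishes in the limit $d_i/(i+1)$; the key mechanism is that any element killed by a power of $\s$ contributes to the transcendence degree at only finitely many ``levels'', a phenomenon analogous to the $+|A|j$ error term already exploited in the proof of Theorem \ref{theo:sdimdef}. Making this precise — rather than the routine inequalities in Steps 1 and 2 — is where the real work lies.
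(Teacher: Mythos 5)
Your skeleton matches the paper's: pass to $\overline{R}=R/\p$ with $\p=(0)^*=\{r\in R\mid \s^m(r)=0 \text{ for some } m\}$, identify $\sdim(\overline R)$ with the $\s$-transcendence degree there, and transfer $\s$-algebraic independence back and forth along $R\to\overline R$ (those transfer arguments in your Steps 1 and 3 are fine). The genuine gap is your Step 3, i.e.\ the comparison $\sdim(R)=\sdim(R/\p)$. The paper does not obtain this by elementary bookkeeping: it invokes the cited result of Hrushovski (Lemma and Definition 4.21, cf.\ Theorem 5.1.1 of \cite{Wibmer:AlgebraicDifferenceEquations}) that $d_i=d(i+1)+e$ for $i\gg 0$ with $d$ equal to the $\s$-transcendence degree of the fraction field of $R/(0)^*$; that nontrivial input is exactly what handles the non-injective part. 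Your replacement sketch does not deliver it: there is no a priori uniform bound on the power of $\s$ needed to kill elements of $\p\cap k[A,\ldots,\s^i(A)]$ as $i$ grows, and even a ``bounded-per-step'' loss of transcendence degree would accumulate to a discrepancy linear in $i$, which does \emph{not} disappear after dividing by $i+1$ --- you need the total discrepancy to be $o(i)$, and nothing in the sketch produces that. Concretely, from a relation $P_l(\overline a_l)=0$ in $\overline R$ you only get $\s^{m_l}(P_l(a_l))=0$ in $R$; without injectivity you cannot propagate ``$\s^{j}(a_l)$ is algebraic over its predecessors and the $b$'s'' (the leading coefficient of the cleared relation may lie in $\p$ and be killed by $\s$), and the weaker consequence --- every window of $e_l+1$ consecutive transforms of $a_l$ is algebraically dependent over the $b$'s --- only bounds the contribution of $a_l$ by a positive fraction of $i$, not by a constant. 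So Step 3, which you yourself flag as ``where the real work lies,'' is precisely the missing proof, and the heuristic offered would not close it.

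There is also a smaller, fixable error in Step 2: the assertion that each $\s^j(a_l)$ is algebraic over $k\big(\s^p(b_t)\mid 0\le p\le j+e\big)$ alone is false in general --- e.g.\ if $\s(a)=b_1$ with $a$ transcendental over $k\langle b_1\rangle$, then $a$ is $\s$-algebraic over $k\langle b_1\rangle$ but algebraic over no truncation $k(b_1,\ldots,\s^e(b_1))$. The correct statement keeps a bounded window of earlier transforms of $a_l$ on the base: choose $N_l$ minimal with $\s^{N_l}(a_l)$ algebraic over $k\langle b\rangle(a_l,\ldots,\s^{N_l-1}(a_l))$, clear denominators, and use injectivity of $\s$ to see the leading coefficient stays nonzero under $\s$, so that every later transform is algebraic over its predecessors together with a bounded range of the $\s^p(b_t)$. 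This still yields $d_i\le r(i+1)+C$ and hence $\sdim(R)\le r$ in the injective case, so Step 2 survives after repair; but note that this repaired argument is exactly the one that breaks without injectivity, which is why the paper leans on the cited eventual-linearity theorem rather than on a direct estimate.
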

\begin{proof}
	Let $A$ be a finite subset of $R$ such that $R=k\{A\}$ and set $d_i=\dim(k[A,\ldots,\s^i(A)])$ for $i\geq 0$. In \cite[Lemma and Definition 4.21]{Hrushovski:elementarytheoryoffrobenius} (cf. \cite[Theorem 5.1.1]{Wibmer:AlgebraicDifferenceEquations}) it is shown that there exist $d,e\in\N$ such that $d_i=d(i+1)+e$ for $i\gg 0$.
	Moreover, $d$ is the $\s$-transcendence degree over $k$ of the field of fractions $K$ of $R/(0)^*$, where $$(0)^*=\{r\in R|\ \exists\ m\geq 1 : \s^m(r)=0\}.$$
	Note that because $R$ is an integral domain, $(0)^*$ is a (reflexive) prime ideal and $K$ is a $\s$-field extension of $k$. We have
	$$\sdim(R)=\lim_{i\to\infty}\frac{d_i}{i+1}=\lim_{i\to\infty}\frac{d(i+1)+e}{i+1}=d.$$
	
	If $a_1,\ldots,a_n\in R$ are $\s$-algebraically independent over $k$, then $k\{a_1,\ldots,a_n\}\cap(0)^*=\{0\}$, because $\s$ is injective on  $k\{a_1,\ldots,a_n\}$. Thus $k\{a_1,\ldots,a_n\}$ embeds into $K$ and it follows that $n\leq d$. 
	
	On the other hand, we can choose a $\s$-transcendence basis $b_1,\ldots,b_d$ of $K/k$ that is contained in $R/(0)^*$. If $a_1,\ldots,a_d\in R$ are such that they are mapped onto $b_1,\ldots,b_d$, then $a_1,\ldots,a_d\in R$ are $\s$-algebraically independent over $k$. It follows that $d=\sdim(R)$ is the largest integer such that there exist $d$ $\s$-algebraically independent elements in $R$. 
	
	If $\s\colon R\to R$ is injective, then $(0)^*=\{0\}$ and $K$ equals the field of fractions of $R$.	
\end{proof}


Recall that a $\s$-ideal $I$ of a $\s$-ring $R$ is \emph{perfect} if $f\s(f)\in I$ implies $f\in I$ for all $f\in R$. Perfect $\s$-ideals are important in classical difference algebra because they feature prominently in a difference Nullstellensatz (\cite[Theorem 2.6.4]{Levin}). In fact, there is a bijection between the difference subvarieties of $\A^n_k$ and the perfect $\s$-ideals of $k\{y_1,\ldots,y_n\}$. Note however, that in this setup solutions are restricted to be solutions in $\s$-field extensions of $k$. Allowing solutions in more general \ks-algebras, such as rings of sequences, leads to a different kind of Nullstellensatz. (See \cite{PogudinScanlonWibmer:SolvingDifferenceEquationsInSequences}.)
Any perfect $\s$-ideal $I$ of $k\{y_1,\ldots,y_n\}$ can be written uniquely as an irredundant intersection $I=\p_1\cap\ldots\cap\p_m$ of prime reflexive $\s$-ideals (\cite[Theorem 2.5.7]{Levin}).

\begin{cor} \label{cor: sdim for perfect}
	Let $I\subseteq k\{y_1,\ldots,y_n\}$ be a perfect $\s$-ideal, written as an irredundant intersection $I=\p_1\cap\ldots\cap\p_m$ of prime reflexive $\s$-ideals. Then $\sdim(I)$ is the maximum (over $1\leq j\leq m$) of the $\s$-transcendence degrees of the fields of fractions of $k\{y_1,\ldots,y_n\}/\p_j$. In particular, for a reflexive prime $\s$-ideal $\p$, $\sdim(\p)$ equals the $\s$-transcendence degree of the field of fractions of $k\{y_1,\ldots,y_n\}/\p$. 
\end{cor}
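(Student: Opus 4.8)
The plan is to reduce the statement about a perfect $\s$-ideal $I=\p_1\cap\ldots\cap\p_m$ to the already-established Proposition \ref{prop: compare stredeg} about a single prime reflexive $\s$-ideal. The bridge is the general principle that $\s$-dimension behaves well with respect to finite intersections of $\s$-ideals, mirroring the fact that the Krull dimension of a reduced finitely generated algebra is the maximum of the dimensions of its irreducible components. Concretely, for $R=k\{y_1,\ldots,y_n\}/I$ we have $R\hookrightarrow \prod_{j=1}^m k\{y_1,\ldots,y_n\}/\p_j$, and each factor $k\{y_1,\ldots,y_n\}/\p_j$ is a quotient of $R$; I would use Proposition \ref{prop: morphism and sdim} on these maps.

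First I would record that each $\p_j$ is a reflexive prime $\s$-ideal, so Proposition \ref{prop: compare stredeg} applies to $R_j:=k\{y_1,\ldots,y_n\}/\p_j$ (it is an integral domain with $\s$ injective, since reflexivity of $\p_j$ means exactly that $\s\colon R_j\to R_j$ is injective), giving $\sdim(\p_j)=\sdim(R_j)=$ the $\s$-transcendence degree of $\operatorname{Frac}(R_j)$ over $k$. Since $I\subseteq \p_j$, the surjection $R\twoheadrightarrow R_j$ gives $\sdim(I)=\sdim(R)\geq\sdim(R_j)=\sdim(\p_j)$ by Proposition \ref{prop: morphism and sdim}(ii); hence $\sdim(I)\geq \max_j \sdim(\p_j)$. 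For the reverse inequality I would work at finite level using Remark \ref{rem: cut at i}: setting $d_i=\dim(k\{y\}[i]/I[i])$, I want $d_i\leq \max_j \dim(k\{y\}[i]/\p_j[i])$ for each $i$. Since $I[i]=I\cap k\{y\}[i]=\bigcap_j(\p_j\cap k\{y\}[i])=\bigcap_j \p_j[i]$ is a radical ideal of the finitely generated $k$-algebra $k\{y\}[i]$ with the $\p_j[i]$ among its associated primes, the minimal primes of $k\{y\}[i]/I[i]$ are a subset of the images of the $\p_j[i]$; since Krull dimension of a reduced f.g.\ algebra is the max of the dimensions of its irreducible components, $d_i=\dim(k\{y\}[i]/I[i])\leq\max_j\dim(k\{y\}[i]/\p_j[i])$. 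Dividing by $i+1$ and letting $i\to\infty$, and noting that the finite maximum commutes with the limit (each sequence converges by Theorem \ref{theo:sdimdef}), yields $\sdim(I)\leq\max_j\sdim(\p_j)$. Combining the two inequalities gives $\sdim(I)=\max_j\sdim(\p_j)$, and the last sentence is the case $m=1$.

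The main obstacle, such as it is, lies in the finite-level argument: one must be slightly careful that $\p_j[i]=\p_j\cap k\{y\}[i]$ really is a \emph{prime} ideal of $k\{y\}[i]$ (it is, being the contraction of a prime ideal), that $I[i]$ is genuinely the intersection $\bigcap_j \p_j[i]$ (contraction commutes with intersection), and — the one nontrivial point — that $d_i$ is controlled by the $\dim(k\{y\}[i]/\p_j[i])$ even if the intersection $I[i]=\bigcap\p_j[i]$ is not irredundant at level $i$. This is fine because we only need the inequality $d_i\leq\max_j\dim(k\{y\}[i]/\p_j[i])$, which follows from $V(I[i])=\bigcup_j V(\p_j[i])$ and the fact that the dimension of a finite union of closed sets is the max of their dimensions; irredundancy is not needed for this direction, and the opposite direction is handled globally via Proposition \ref{prop: morphism and sdim}(ii) rather than at finite level. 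Alternatively, and perhaps more cleanly, one can avoid finite level entirely: $R\hookrightarrow\prod_j R_j$ is injective, and $\prod_j R_j$ is a finitely $\s$-generated $k$-$\s$-algebra whose $\s$-dimension is $\max_j\sdim(R_j)$ — this last fact about finite products can be proved exactly like Proposition \ref{prop: sdim and tensor products} but with $\dim$ of a product of algebras (i.e., a disjoint union of schemes) being the maximum rather than the sum — and then Proposition \ref{prop: morphism and sdim}(i) gives $\sdim(R)\leq\max_j\sdim(R_j)$, completing the argument. I would present whichever of these two routes is shorter in context.
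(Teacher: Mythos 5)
Your proof is correct and takes essentially the same route as the paper: both rest on the finite-level identity $I[i]=\p_1[i]\cap\ldots\cap\p_m[i]$ together with the fact that the Krull dimension of $k\{y\}[i]/I[i]$ is controlled by the maximum of the $\dim(k\{y\}[i]/\p_j[i])$, and both identify $\sdim(\p_j)$ with the $\s$-transcendence degree via the reflexive-prime case. The only (harmless) difference is how the limit is extracted: the paper uses the eventual linearity $\dim(k\{y\}[i]/\p_j[i])=d(\p_j)(i+1)+e(\p_j)$ and picks the lexicographically maximal $(d(\p_j),e(\p_j))$, while you prove the two inequalities separately, getting $\geq$ from the surjections $k\{y_1,\ldots,y_n\}/I\twoheadrightarrow k\{y_1,\ldots,y_n\}/\p_j$ and $\leq$ by interchanging a finite maximum with the limit, which is equally valid.
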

\begin{proof}
	With notation as in Remark \ref{rem: cut at i} we have $I[i]=\p_1[i]\cap\ldots\cap\p_m[i]$ for $i\geq 0$  and it follows that
	$$d_i=\dim(k\{y\}[i]/I[i])=\max\{\dim (k\{y\}[i]/\p_j[i]) |\ 1\leq j\leq m\}.$$
	As in the proof of Proposition \ref{prop: compare stredeg}, for every $1\leq j\leq m$, there exist $d(\p_j), e(\p_j)\in \N$ such that $$d_i(\p_j)=\dim(k\{y\}[i]/\p_j[i])=d(\p_j)(i+1)+e(\p_j)$$
	for $i\gg0$. Thus, if $j_0\in\{1,\ldots,m\}$ is such that $d(\p_{j_0})$ is maximal and $e(\p_{j_0})$ is maximal among all $e(\p_j)$ with $d(\p_j)$ maximal, then
	$d_i=d(\p_{j_0})(i+1)+e(\p_{j_0})$ for $i\gg 0$. It follows that
	$$\sdim(I)=\lim_{i\to\infty}\frac{d_i}{i+1}=d(\p_{j_0}).$$
	Since $d(\p_{j})$ agrees with the $\s$-transcendence degree of the field of fractions of $k\{y_1,\ldots,y_n\}/\p_j$ over $k$ the claim follows.
\end{proof}

We next compare our notion of $\s$-dimension with a difference analog of the Krull dimension. Let us first explain how the idea of the definition of the Krull dimension can be adapted to difference algebra. (Cf. \cite[Definition 4.6.1]{Levin} or \cite[Section 7.2]{KontradievaLevinMikhalev:DifferentialAndDiffereneDimensionPolynomials}.) Since the $\s$-polynomial ring $k\{y_1\}$ in one $\s$-variable contains infinite descending chains of prime $\s$-ideals one cannot simply take the maximal length of chains of prime $\s$-ideals as the definition. Instead one has to work with chains of chains: Let $R$ be a finitely $\s$-generated \ks-algebra. The largest integer $d\geq 0$ such that there exists a chain of infinite chains of prime $\s$-ideals of $R$ of the form
\begin{equation} \label{eq:chain}
\p_0\supsetneq\p_0^1\supsetneq\p_0^2\supsetneq\ldots \supsetneq\p_1\supsetneq\p_1^1\supsetneq\p_1^2\supsetneq\ldots \supsetneq\p_2\supsetneq \ldots\supsetneq \p_{d-1}\supsetneq\p_{d-1}^1\supsetneq\p_{d-1}^2\supsetneq\ldots\supsetneq\p_d
\end{equation}
is called the \emph{difference Krull dimension} of $R$ and denoted by $\dim_U(R)$. By definition $\dim_U(R)=0$ if $R$ has no (or only finitely many) prime $\s$-ideals. The existence of a maximal $d$ follows from the proof of Proposition \ref{prop:compare with Krull} below.

\begin{prop} \label{prop:compare with Krull}
Let $R$ be a finitely $\s$-generated \ks-algebra. Then $$\dim_U(R)\leq\sdim(R).$$
\end{prop}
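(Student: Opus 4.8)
The goal is to show that any chain of infinite chains of prime $\s$-ideals of $R$ as in \eqref{eq:chain} has length $d$ bounded above by $\sdim(R)$. The plan is to translate a chain of length $d$ into a statement about dimensions of the finitely generated algebras $k[A,\ldots,\s^i(A)]$ that appear in the definition of $\sdim(R)$, and then extract a lower bound on $\frac{d_i}{i+1}$ that forces $\sdim(R) \geq d$.

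First I would reduce to understanding what an \emph{infinite} descending chain $\p_j \supsetneq \p_j^1 \supsetneq \p_j^2 \supsetneq \cdots$ of prime $\s$-ideals contributes. The key point is that although $R$ itself (or $R/\p$ for a prime $\s$-ideal $\p$) admits infinite descending chains of prime $\s$-ideals, these are detected by the finite-level algebras: intersecting with $k\{y\}[i]$ (in the notation of Remark \ref{rem: cut at i}, after presenting $R$ as a quotient of a $\s$-polynomial ring), an infinite strictly descending chain of prime $\s$-ideals eventually produces, at each sufficiently large level $i$, a strictly descending chain of primes whose length grows at least linearly in $i$. Concretely, if $\p \supsetneq \p^1 \supsetneq \p^2 \supsetneq \cdots$ is an infinite chain of prime $\s$-ideals, then for each $m$ the inclusions $\p^m \cap k\{y\}[i] \subsetneq \cdots \subsetneq \p^1 \cap k\{y\}[i] \subsetneq \p\cap k\{y\}[i]$ are eventually strict for $i$ large, and because these ideals are $\s$-stable, applying $\s,\s^2,\ldots$ multiplies the available length: one gets a chain of primes in $k\{y\}[i]$ of length roughly $c(i+1)$ for a constant $c \geq 1$ depending on the chain. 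This is where the difference-algebraic input — that a proper inclusion of prime $\s$-ideals forces the dimension (Krull dimension of the coordinate rings) to drop linearly rather than by a bounded amount — enters; I would invoke the dimension-polynomial type results already cited in the paper (the proof of Proposition \ref{prop: compare stredeg}, i.e., \cite[Lemma and Definition 4.21]{Hrushovski:elementarytheoryoffrobenius}) to control $\dim(k\{y\}[i]/\p[i])$ as an eventually linear function of $i$ with leading coefficient $\sdim(\p)$, and to see that $\sdim(\p) > \sdim(\q)$ whenever $\p \subsetneq \q$ are prime $\s$-ideals... wait — it is the reverse: a \emph{smaller} prime $\s$-ideal has \emph{larger} $\s$-dimension, so along the chain \eqref{eq:chain} the $\s$-dimensions of the outer primes $\p_0, \p_1, \ldots, \p_d$ strictly decrease, and each infinite intermediate chain $\p_j \supsetneq \p_j^1 \supsetneq \cdots$ shows that the drop from $\sdim(\p_j)$ to $\sdim(\p_{j+1})$ is at least $1$ (since an infinite chain of primes between them, each a proper inclusion of prime $\s$-ideals, cannot be accommodated within a dimension gap smaller than $1$).

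So the main steps are: (1) present $R$ as $k\{y_1,\ldots,y_n\}/I$ and pass to the quotients $R/\p_j \cong k\{y\}/\p_j$; (2) show $\p \subsetneq \q$ prime $\s$-ideals implies $\sdim(k\{y\}/\p) \geq \sdim(k\{y\}/\q) + ?$ — more precisely, that an infinite strictly descending chain of prime $\s$-ideals inside $k\{y\}/\p_{j+1}$, down to $k\{y\}/\p_j$... (3) conclude $\sdim(R) = \sdim(k\{y\}/\p_0) \geq d$ by telescoping, since $\sdim(k\{y\}/\p_0) \geq \sdim(k\{y\}/\p_1) + 1 \geq \cdots \geq \sdim(k\{y\}/\p_d) + d \geq d$, using $\sdim \geq 0$ always and $\sdim(R) \geq \sdim(R/\p_0)$ by Proposition \ref{prop: morphism and sdim}(ii).

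The hard part will be step (2): proving that an infinite strictly descending chain of prime $\s$-ideals between two primes $\p \subsetneq \q$ forces the $\s$-dimension to drop by at least $1$. The intuition is that within a fixed finite-level algebra $k[A,\ldots,\s^i(A)]$ a strictly descending chain of primes of length $\ell$ costs $\ell$ in Krull dimension, and $\s$-invariance lets us "shift" a single proper inclusion into $i+1$ proper inclusions at level $i$ (applying $\s^0,\ldots,\s^i$), so an infinite $\s$-chain produces, at level $i$, a chain of length $\geq i+1$ beyond what $\q$ allows; dividing by $i+1$ and letting $i\to\infty$ gives the drop of $1$. The technical care needed is in the word "eventually": one must check that the inclusions $\p^m \cap k\{y\}[i] \subsetneq \p^{m-1}\cap k\{y\}[i]$ really do become strict for $i$ large enough, and that the shifted chain genuinely consists of distinct prime ideals — for the latter, inversivity of $k$ (or passing to the inversive closure as in the proof of Theorem \ref{theo:sdimdef}) makes $\s$ injective on the relevant coordinate rings so that $\s^t$ of a strict inclusion is a strict inclusion. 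I would handle this by first reducing to $k$ inversive, then carefully combining the dimension-polynomial asymptotics for each $\p_j$ with the observation that a proper inclusion of \emph{reflexive} prime $\s$-ideals strictly decreases the leading coefficient of the dimension polynomial.
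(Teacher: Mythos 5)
Your overall frame is the right one and is the same as the paper's in spirit: extract from each infinite segment of the chain (\ref{eq:chain}) a drop of at least one in $\s$-dimension, telescope, and finish with Proposition \ref{prop: morphism and sdim}(ii). (A repairable slip: since smaller primes give larger quotients, the telescoping must run $\sdim(R)\geq\sdim(R/\p_d)\geq\sdim(R/\p_{d-1})+1\geq\ldots\geq\sdim(R/\p_0)+d\geq d$, i.e.\ it ends at $\p_d$, not at $\p_0$ as written.) The genuine gap is in your step (2), the drop-by-one claim itself: neither of the two justifications you offer survives scrutiny. First, the assertion that a proper inclusion of prime $\s$-ideals --- even reflexive ones --- strictly decreases the leading coefficient of the dimension function is false: in $k\{y\}$ the reflexive prime $\s$-ideals $[\s(y)-y]\subsetneq[y]$ both have $\s$-dimension $0$ (their eventual dimension functions are $0\cdot(i+1)+1$ and $0\cdot(i+1)+0$). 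This failure is precisely why $\dim_U$ is defined through \emph{infinite} chains. Second, the shifting/counting argument (``an infinite $\s$-chain produces at level $i$ a chain of primes of length roughly $c(i+1)$'') does not hold up: the inclusion $\p^m\cap k\{y\}[i]\subsetneq\p^{m-1}\cap k\{y\}[i]$ only becomes strict once $i$ exceeds the order of some element separating $\p^{m-1}$ from $\p^m$, and these orders can grow arbitrarily fast with $m$, so the number of strict inclusions visible at level $i$ may be sublinear in $i$ and contributes nothing after dividing by $i+1$. Nor does ``applying $\s^0,\ldots,\s^i$ to one proper inclusion'' manufacture $i+1$ primes forming a single chain between $\p[i]$ and $\q[i]$: $\s$-stability gives $\s(\mathfrak{r})\subseteq\mathfrak{r}$, the shifted sets are not prime ideals of the level-$i$ ring sitting between the two given primes, and they are in general not totally ordered. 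Inversivity of $k$ or reflexivity does not rescue this.

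What actually closes the gap --- and is the paper's argument --- is the full strength of the result you cite only in passing: for every prime $\s$-ideal $\mathfrak{r}$ of $R$ one has, by \cite[Lemma and Definition 4.21]{Hrushovski:elementarytheoryoffrobenius}, $\dim(k[\overline{A},\ldots,\s^i(\overline{A})])=d(\mathfrak{r})(i+1)+e(\mathfrak{r})$ \emph{exactly} for $i\gg 0$, with $d(\mathfrak{r}),e(\mathfrak{r})\in\N$, where $\overline{A}$ is the image of $A$ in $R/\mathfrak{r}$. Passing to a properly smaller prime $\s$-ideal strictly increases this eventual affine-linear function, hence strictly increases the pair $(d(\mathfrak{r}),e(\mathfrak{r}))$ in the lexicographic order; since for a fixed leading coefficient only finitely many constant terms are available below a given bound, an \emph{infinite} strictly increasing sequence of such pairs bounded by the pair of the bottom prime forces the leading coefficient $d(\cdot)$ itself to increase by at least one. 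That integrality/finiteness of the constant term $e(\mathfrak{r})$, and the resulting well-ordering of the pairs, is the missing idea in your sketch; without it the level-wise counting you propose cannot produce the required linear growth.
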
	
\begin{proof}
Let $A\subseteq R$ be finite such that $R=k\{A\}$. For a prime $\s$-ideal $\p$ of $R$ let $\overline A$ denote the image of $A$ in $R/\p$ and consider the sequence $(d_i)_{i\geq 0}$ defined by $d_i=\dim(k[\overline{A},\ldots,\s^{i}(\overline{A})])$. According to \cite[Lemma and Definition 4.21]{Hrushovski:elementarytheoryoffrobenius} (cf. \cite[Theorem 5.1.1]{Wibmer:AlgebraicDifferenceEquations}) there exist $d(\p),e(\p)\in\N$ such that $d_i=d(\p)(i+1)+e(\p)$ for $i\gg 0$. So the polynomial $\omega_\p(t)=d(\p)(t+1)+e(\p)$ satisfies $\omega_\p(i)=d_i$ for $i\gg 0$.

We define a total order on the set of polynomials of the form $d(t+1)+e$ with $d,e\in\N$ by  
$d(t+1)+e\leq d'(t+1)+e'$ if $d(i+1)+e\leq d'(i+1)+e'$ for $i\gg 0$. This is a well-order since it corresponds to the lexicographic order on pairs $(d,e)$. If $\p\supseteq\q$ are prime $\s$-ideals of $R$, then $\omega_\p(t)\leq\omega_\q(t)$.  Moreover, $\omega_\p(t)<\omega_\q(t)$ if $\p\supsetneq\q$.
So an infinite descending chain $\p\supsetneq\p^1\supsetneq \p^2\supsetneq\dots\supsetneq\q$ of prime $\s$-ideals in $R$ gives rise to an infinite ascending chain $\omega_\p(t)<\omega_{\p^1}(t)<\omega_{\p^2}(t)<\ldots <\omega_\q(t)$ of polynomials. But in such a chain we necessarily have $d(\p)<d(\q)$. Thus for a descending chain of infinite chains of prime $\s$-ideals as in equation (\ref{eq:chain}) we have $d({\p_d})\geq d$. So $d(\p_d)\geq\dim_U(R)$. 

As $\sdim(R)\geq \sdim(R/\p_d)=d(\p_d)$ by Proposition \ref{prop: morphism and sdim} (ii), it follows that $\sdim(R)\geq \dim_U(R)$ as desired.
\end{proof}

\begin{rem}
	In the definition of the difference Krull dimension above we have used prime $\s$-ideals. A similar invariant $\dim_{U^*}(R)$ could be obtained by modifying the definition by only allowing reflexive prime $\s$-ideals. Then clearly, $\dim_{U^*}(R)\leq\dim_U(R)$ and therefore also $\dim_{U^*}(R)\leq \sdim(R)$.
\end{rem}

The following example shows that the inequality from Proposition \ref{prop:compare with Krull} can be strict, even if $\sdim(R)$ is an integer.

\begin{ex}
	Consider $S=k\times k$ as a \ks-algebra via $\s(a,b)=(\s(b),\s(a))$ and $k\to S,\ \lambda\mapsto (\lambda,\lambda)$. Let $R=S\{y\}$ denote the univariate $\s$-polynomial ring over $S$. We first show that $R$ has no prime $\s$-ideals and so $\dim_U(R)=0$.
	
	Suppose $\p$ is a prime $\s$-ideal of $R$. Let $e_1=(1,0)\in S$ and $e_2=(0,1)\in S$. Since $e_1e_2=0\in\p$, we have $e_1\in\p$ or $e_2\in \p$. Assume (without loss of generality) that $e_1\in \p$. Since $\p$ is a $\s$-ideal, also $\s(e_1)=e_2\in \p$. But then $1=e_1+e+2\in \p$; a contradiction.
	
	To see that $\sdim(R)=1$, we  choose $A=\{e_1,e_2, y\}$. Then $\dim(k[A,\ldots,\s^i(A)])=i+1$ for all $i\in \N$ because $\{y_1,\ldots,\s^i(y)\}\subseteq A\cup\ldots\s^i(A)$ is an algebraically independent subset of maximal cardinality (Proposition \ref{prop: dim and algebraic independence}).
\end{ex}

%
%
%
%
%
%
%
%
%
%
%

\section{Covering density and the dimension of difference monomials} \label{sec:Covering Density and the dimension of difference monomials}

In this section we determine the $\s$-dimension of a univariate $\s$-monomial $\s^{\alpha_1}(y)^{\beta_1}\ldots\s^{\alpha_n}(y)^{\beta_n}$. It turns out that this $\s$-dimension is essentially given by the \emph{covering density} of $\{\alpha_1,\ldots,\alpha_n\}$.

There is a vast body of literature on covering, packing and tiling problems. We refer the interested reader to \cite{BollobasJansonRiordan:OnCoveringByTranslatesOfaSet} and the references given there. In rather general terms the covering problem can be formulated as follows: Given an additive group $G$ and a subset $E$ of $G$, find a ``minimal'' subset $E'$ of $G$ such that $E+E'=\{e+e'|\ e\in E,\ e'\in E'\}$ equals $G$. Such an $E'$ is often called a \emph{complement} of $E$. 
It is instructive to think of $E+E'$ as a union of translates $E+e'$ of $E$. The question then becomes, ``how many'' translates of $E$ are needed to cover $G$? To give a precise meaning to ``minimal'' and ``how many'' one usually assumes that $G$ is equipped with some measure or density. A well studied special case is $G=\R^n$ and $E$ a ball or convex body. For our purpose we are interested in the case $G=\Z$ and $E$ a finite set, studied e.g., in \cite[Section 5]{BollobasJansonRiordan:OnCoveringByTranslatesOfaSet}, \cite{Newman:ComplememtsOfFiniteSetsOfIntegers},\cite{Weinstein:SomeCoveringAndPackingResultsInNumberTheory},\cite{Tuller:thesis},\cite{Schmidt:ComplementarySetsOfFiniteSets},\cite{SchmidtTuller:CoveringAndPackingI},\cite{SchmidtTuller:CoveringAndPackingII}.

For a finite subset $E$ of $\Z$, the \emph{covering density} $\c(E)$ of $E$ can be defined as 
$$\c(E)=\inf_{E'}\d(E'),$$
where $\d(E')=\lim_{i\to\infty}\frac{|E'\cap[-i,i]|}{2i}$ is the density of $E'$ and
the infimum is taken over all complements of $E$ for which the density exists. We note that the covering density is called the \emph{codensity} in \cite{Newman:ComplememtsOfFiniteSetsOfIntegers} and the \emph{minimal covering frequency} in \cite{SchmidtTuller:CoveringAndPackingI,SchmidtTuller:CoveringAndPackingII}. We are using the nomenclature from \cite{BollobasJansonRiordan:OnCoveringByTranslatesOfaSet}. As pointed out in 
\cite[Section 5]{BollobasJansonRiordan:OnCoveringByTranslatesOfaSet}, there is an equivalent definition of $c(E)$, which we will use: For $i\geq 1$ let $\tau(E,i)$ be the smallest number of translates of $E$ that cover $\{1,\ldots,i\}$, i.e.,$$\tau(E,i)=\min\{|E'|\ |\ E+E'\supseteq\{1,\ldots,i\} \}.$$
Then $\c(E)=\lim_{i\to\infty}\frac{\tau(E,i)}{i}$.

\begin{theo} \label{theo: covering density and smonomial}
	The $\s$-dimension of a univariate $\s$-monomial $\s^{\alpha_1}(y)^{\beta_1}\ldots\s^{\alpha_n}(y)^{\beta_n}$ with $0\leq\alpha_1<\alpha_2<\ldots<\alpha_n$ and $\beta_1,\ldots,\beta_n\geq 1$ is $1-\c(E)$, where $c(E)$ is the covering density of $E=\{\alpha_1,\ldots,\alpha_n\}$.
\end{theo}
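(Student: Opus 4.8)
The structural fact that makes everything work is that the $\s$-ideal $[f]$ generated by a $\s$-monomial $f=\s^{\alpha_1}(y)^{\beta_1}\cdots\s^{\alpha_n}(y)^{\beta_n}$ is a \emph{monomial ideal} in the ordinary polynomial ring $k[y,\s(y),\s^2(y),\ldots]$. Indeed $[f]=\big(f,\s(f),\s^2(f),\ldots\big)$, and $\s^j(f)=\s^{\alpha_1+j}(y)^{\beta_1}\cdots\s^{\alpha_n+j}(y)^{\beta_n}$ is a monomial whose set of occurring variables, recorded as a subset of $\N$ under the identification $\s^t(y)\leftrightarrow t$, is exactly the shifted set $E+j$. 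The first step is to combine this with Proposition~\ref{prop: characterize free}(iii) and the elementary observation that for a monomial ideal $I$ and a set of variables $y_T$ one has $I\cap k[y_T]=(0)$ if and only if no monomial generator of $I$ has all its variables among $y_T$. This yields the purely combinatorial description: a finite $T\subseteq\N$ is free with respect to $f$ if and only if $T$ contains no shifted copy $E+j$, $j\in\N$, of $E$.

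The second step is to rewrite $d_i(f)$ as a covering number. For fixed $i$, a set $T\subseteq\{0,\ldots,i\}$ is free exactly when its complement $S=\{0,\ldots,i\}\smallsetminus T$ meets every shifted copy $E+j$ contained in $\{0,\ldots,i\}$, i.e.\ every $E+j$ with $0\le j\le i-\alpha_n$. Hence $d_i(f)=(i+1)-\mu_i$, where $\mu_i$ is the least cardinality of a subset $S\subseteq\{0,\ldots,i\}$ meeting $E+j$ for all $j\in\{0,\ldots,i-\alpha_n\}$. Using $S\cap(E+j)\neq\emptyset \Longleftrightarrow j\in S-E$, this transversal condition is equivalent to $S-E\supseteq\{0,\ldots,i-\alpha_n\}$, i.e.\ to the translates $\{s-E:s\in S\}$ of $-E$ covering the block $\{0,\ldots,i-\alpha_n\}$. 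A short truncation argument shows that the constraint $S\subseteq\{0,\ldots,i\}$ costs nothing (each $t$ in the block is covered by some $s=t-e$ with $e\in E$, and then $s\in\{t+\alpha_1,\ldots,t+\alpha_n\}\subseteq\{0,\ldots,i\}$), so $\mu_i=\tau(-E,\,i-\alpha_n+1)$; and the reflection $E'\mapsto(m+1)-E'$ gives $\tau(-E,m)=\tau(E,m)$, whence $\mu_i=\tau(E,\,i-\alpha_n+1)$ for all large $i$.

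The third step is to pass to the limit. Corollary~\ref{cor: limit exists} guarantees that $\sdim(f)=\lim_{i\to\infty}d_i(f)/(i+1)$ exists; substituting $m=i-\alpha_n+1$ (so that $i+1=m+\alpha_n$ and $m/(m+\alpha_n)\to 1$),
\[
\sdim(f)=1-\lim_{i\to\infty}\frac{\mu_i}{i+1}=1-\lim_{m\to\infty}\frac{\tau(E,m)}{m+\alpha_n}=1-\lim_{m\to\infty}\frac{\tau(E,m)}{m}=1-\c(E),
\]
the last equality being the definition of the covering density recalled above.

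I expect the only genuinely delicate point to be the truncation argument pinning down $\mu_i=\tau(E,i-\alpha_n+1)$; the monomial-ideal reduction and the passage to complements are routine. In fact even a weaker bound $|\mu_i-\tau(E,i-\alpha_n+1)|=o(i)$ would suffice for the conclusion, so there is some slack. One should also separately check the degenerate cases $i<\alpha_n$ (where $\mu_i=0$) and $n=1$ (where $\c(E)=1$ and $\sdim(f)=0$), but these are immediate.
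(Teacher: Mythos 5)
Your proof is correct and takes essentially the same route as the paper's: both reduce the computation to minimal hitting sets for the translates $E+j$, identify these with the covering numbers $\tau(-E,\,i-\alpha_n+1)$, and pass to the limit $1-\c(E)$. The only (harmless) deviations are that you absorb the exponents $\beta_j$ directly through monomial supports instead of the paper's radical reduction to the squarefree case, and you prove $\tau(-E,m)=\tau(E,m)$ by an explicit reflection rather than citing $\c(-E)=\c(E)$; note also the small sign slip ``$s=t-e$'' should read $s=t+e$, as your subsequent bound $s\in\{t+\alpha_1,\ldots,t+\alpha_n\}$ already uses.
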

\begin{proof}
	We first observe that $\sdim(\s^{\alpha_1}(y)^{\beta_1}\ldots\s^{\alpha_n}(y)^{\beta_n})=\sdim(\s^{\alpha_1}(y)\ldots\s^{\alpha_n}(y))$ by Corollary \ref{cor: morphism and sdim} (ii) and Corollary \ref{cor: radical}, where we use that $$[\s^{\alpha_1}(y)^{\beta_1}\ldots\s^{\alpha_n}(y)^{\beta_n}]\subseteq [\s^{\alpha_1}(y)\ldots\s^{\alpha_n}(y)]\subseteq \sqrt{[\s^{\alpha_1}(y)^{\beta_1}\ldots\s^{\alpha_n}(y)^{\beta_n}]}.$$
So it remains to show that $\sdim(f)=1-\c(E)$ for $f=\s^{\alpha_1}(y)\ldots\s^{\alpha_n}(y)$.

As in Remark \ref{rem: cut at i}, we set $k\{y\}[i]=k[y,\ldots,\s^i(y)]$ and $[f][i]=[f]\cap k\{y\}[i]$ for $i\geq 0$. Then $\sdim(f)=\lim_{i\to\infty}\frac{d_i}{i+1}$, where $d_i=\dim(k\{y\}[i]/[f][i])$.
%

For an arbitrary $F\subseteq k\{y\}$, it is non-trivial to determine $[F][i]$. However, in our situation, since we are only dealing with monomial ideals, we see that $$[f][i]=[f,\s(f),\ldots,\s^{i-\alpha_n}(f)]\subseteq k\{y\}[i]$$ for $i\geq\alpha_n$.
To determine the dimension of this monomial ideal, let us recall (\cite[Chapter 9, \S 1, Prop. 3]{CoxLittleOShea:IdealsVarietiesAndAlgorithms}) how to determine the dimension of a monomial ideal $M=(f_1,\ldots,f_r)\subseteq k[y_1,\ldots,y_m]$ in general, where $f_j=\prod_{l\in S_j}y_l$ and $S_1,\ldots,S_r\subseteq \{1,\ldots,m\}$. The solution set of $M$ is a finite union of coordinate subspaces and to find the dimension of $k[y_1,\ldots,y_m]/M$, it suffices to find the coordinate subspace of the largest dimension, which is given by $$m-\min\{|T|\ |\ T\subseteq\{1,\ldots,m\},\ T\cap S_j\neq\emptyset \text{ for } j=1,\ldots,r\}.$$
	Therefore $$\dim(k\{y\}[i]/[f][i])=i+1-\min\{|T|\ |\ T\subseteq\{0,\ldots,i\},\ T\cap (E+j)\neq\emptyset \text{ for } j=0,\ldots,i-\alpha_n\}.$$	
But for $T\subseteq\{0,\ldots,i\}$, we have $T\cap (E+j)\neq\emptyset \text{ for } j=0,\ldots,i-\alpha_n$ if and only if $\{0,\ldots,i-\alpha_n\}\subseteq \cup_{t\in T}(-E+t)$, where $-E=\{-e|\ e\in E\}$. Thus
	\begin{align*}
	&\min\{|T|\ |\ T\subseteq\{0,\ldots,i\},\ T\cap (E+j)\neq\emptyset \text{ for } j=0,\ldots,i-\alpha_n\} \\
	&=\min\{|T|\ |\ T\subseteq\{0,\ldots,i\},\ \{0,\ldots,i-\alpha_n\}\subseteq -E+T \}\\
	&=\min\{|T|\ |\ T\subseteq \Z,\  \{0,\ldots,i-\alpha_n\}\subseteq -E+T\}\\
	&=\min\{|T|\ |\ T\subseteq \Z,\  \{1,\ldots,i-\alpha_n+1\}\subseteq -E+T\}\\
	&=\tau(-E,i-\alpha_n+1)
	\end{align*}
and so, $d_i=i+1-\tau(-E,i-\alpha_n+1)$.
Consequently,
\begin{align*}
\sdim(f)&=\lim_{i\to\infty}\frac{d_i}{i+1}=1-\lim_{i\to\infty}\frac{\tau(-E,i-\alpha_n+1)}{i+1}=\\ &=1-\lim_{i\to\infty}\frac{\tau(-E,i-\alpha_n+1)}{i-\alpha_n+1}\left(\frac{i-\alpha_n+1}{i+1}\right)=\\
&=1-\c(-E)\cdot 1.
\end{align*}
Since $\c(-E)=\c(E)$ (\cite[Lemma 2.8]{Tuller:thesis}) the claim follows.	
\end{proof}

\begin{ex} \label{ex: sdim of monomial}
	The covering density of a one-element set is $1$ and the covering density $c(E)$ of a finite subset $E$ of $\Z$ with at least two elements satisfies $\frac{1}{|E|}\leq c(E)\leq \frac{1}{2}$ (\cite[Lemma 2.9]{Tuller:thesis}). Moreover, $c(E)$ is rational (\cite[Theorem 2.13]{Tuller:thesis} or \cite[Theorem~5.1]{BollobasJansonRiordan:OnCoveringByTranslatesOfaSet}).
	
	Thus the $\s$-dimension of a $\s$-monomial $\s^{\alpha_1}(y)^{\beta_1}\ldots\s^{\alpha_n}(y)^{\beta_n}$ is $0$ if $n=1$ and otherwise it is a rational number between $\frac{1}{2}$ and $1-\frac{1}{n}$.
\end{ex}

\section{Values of the difference dimension} \label{sec:Values of the difference dimension}

As seen in Example \ref{ex: sdim of monomial} above, the $\s$-dimension of a system of algebraic difference equations need not be an integer. This raises two questions:
\begin{itemize}
\item When is the $\s$-dimension an integer?
\item What values can the $\s$-dimension take?
\end{itemize}

Concerning the first question, we add to the already known cases, the case of a finitely $\s$-generated \ks-Hopf algebra. We do not fully answer the second question but we reduce it to a purely combinatorial problem. This reduction shows in particular, that the answer does not depend on the base $\s$-field $k$.

We have already seen that the $\s$-dimension of $R=k\{y_1,\ldots,y_n\}/I$ is an integer in all of the following cases:
\begin{itemize}
	\item $R$ is an integral domain, i.e., $I$ is a prime $\s$-ideal (Proposition \ref{prop: compare stredeg}).
	\item $I=[F]$ for some $F\subseteq k[y_1,\ldots,y_n]$ (Proposition \ref{prop: sdim for algebraic}).
	\item $I$ is a perfect $\s$-ideal (Corollary \ref{cor: sdim for perfect}).
\end{itemize}

The following theorem shows that the $\s$-dimension of a finitely $\s$-generated \ks-Hopf algebra is also always an integer. This result was already alluded to in \cite[Remark~A.30]{DiVizioHardouinWibmer:DifferenceGaloisofDifferential}. Hopf algebras are important in algebraic geometry because they are the coordinate rings of affine group schemes (\cite[Section~1.4]{Waterhouse:IntrotoAffineGroupSchemes}). Hopf algebras over a field $k$ that are finitely generated as $k$-algebras correspond to affine group schemes of finite type over $k$, i.e., affine (sometimes also called linear) algebraic groups. A similar duality exists in difference algebraic geometry: \ks-Hopf algebras that are finitely $\s$-generated as \ks-algebras correspond to affine difference algebraic groups. See \cite[Appendix~A]{DiVizioHardouinWibmer:DifferenceGaloisofDifferential},  \cite{Wibmer:FinitenessPropertiesOfAffineDifferenceAlgebraicGroups} and \cite{Wibme:AlmostSimpleAffineDiffferenceAlgebraicGroups} for more background of affine difference algebraic groups.

%
\begin{theo}[{\cite[Theorem 3.7]{Wibmer:FinitenessPropertiesOfAffineDifferenceAlgebraicGroups}}]
	Let $R$ be a finitely $\s$-generated $k$-$\s$-algebra. Assume that $R$ can be equipped with the structure of a $k$-$\s$-Hopf algebra, i.e., there exist morphisms of $k$-$\s$-algebras $\Delta\colon R\to R\otimes_k R$, $S\colon R\to R$ and $\varepsilon\colon R\to k$ that turn $R$ into a Hopf algebra. Then $\sdim(R)$ is an integer.
\end{theo}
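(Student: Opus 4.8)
\emph{Proof plan.} The strategy is to reduce to the case where $R$ is an integral domain, which is settled by Proposition~\ref{prop: compare stredeg}, by exploiting the homogeneity of the group scheme $G=\spec(R)$: its identity component is an integral, and $\s$-stable, piece of $G$ that already carries all of the $\s$-dimension. First come the reductions. By Proposition~\ref{prop: sdim and base extension} we may enlarge the base $\s$-field, so we arrange that $k$ is an uncountable algebraically closed field (pass to $\kb$ and then enlarge; $\s$ extends throughout). By Proposition~\ref{prop: sdim and radical} we may replace $R$ by $R_{\operatorname{red}}$; since $k$ is now perfect, $R_{\operatorname{red}}\otimes_k R_{\operatorname{red}}$ is reduced, so $\Delta$, $S$ and $\varepsilon$ descend and $R_{\operatorname{red}}$ is again a $k$-$\s$-Hopf algebra. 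Thus we may and do assume $k$ uncountable and algebraically closed and $R$ reduced; put $G=\spec(R)$.

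Next I locate a suitable $\s$-stable prime. Let $\m=\ker(\varepsilon)$; this is a maximal ideal with $R/\m=k$, and since $\varepsilon$ is a morphism of $\s$-algebras and $\s\colon k\to k$ is injective, $\s^{-1}(\m)=\m$. The identity component $G^\circ$ of $G$ is irreducible (this is where reducedness of $R$ and perfectness of $k$ enter), so it is cut out by a unique minimal prime $\p_0$ of $R$, namely the unique minimal prime contained in $\m$. As $\s^{-1}(\p_0)$ is a prime contained in $\s^{-1}(\m)=\m$, it contains a minimal prime of $R$, necessarily $\p_0$; therefore $\s(\p_0)\subseteq\p_0$, i.e.\ $\p_0$ is a prime $\s$-ideal. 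Hence $R/\p_0$ is a finitely $\s$-generated integral domain over $k$, and Proposition~\ref{prop: compare stredeg} gives $\sdim(R/\p_0)\in\N$.

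It remains to prove $\sdim(R)=\sdim(R/\p_0)$; the inequality ``$\geq$'' is Proposition~\ref{prop: morphism and sdim}(ii) for the surjection $R\to R/\p_0$. For ``$\leq$'', fix a finite $\s$-generating set $A$ of $R$, write $R[i]=k[A,\ldots,\s^i(A)]$, $d_i=\dim(R[i])$, and likewise $d_i'$ for $R/\p_0$. Pick $N\geq 0$ with $\Delta(a)\in R[N]\otimes_k R[N]$ for all $a\in A$; applying $\s^j$ gives $\Delta(\s^j(a))\in R[N+j]\otimes_k R[N+j]$, so for any $k$-point $g$ of $G$ the comorphism of left translation by $g$ maps $R[i]$ into $R[i+N]$ (and likewise for $g^{-1}$, so $N$ is uniform). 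Since $R[i]$ is a finitely generated $k$-algebra, $d_i=\max_{\pP}\dim((R/\pP)[i])$, the maximum over the minimal primes $\pP$ of $R$ (by the fact from \cite{Bourbaki:commutativealgebra} used in the proof of Proposition~\ref{prop: dim and algebraic independence}). For each such $\pP$, the component $V(\pP)$ has a $k$-point $g$ — here one uses that $k$ is uncountable and algebraically closed, so that the dominant inverse system $\big(\spec((R/\pP)[i])\big)_i$ admits a $k$-rational point, in the spirit of Lemma~\ref{lemma: Zariski dense} — and, $V(\pP)$ being the translate $g\cdot G^\circ$, translation by $g^{-1}$ induces an isomorphism $R/\pP\cong R/\p_0$ carrying the image of $R[i]$ into the image of $R[i+N]$. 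Thus $\dim((R/\pP)[i])\leq d_{i+N}'$ for every such $\pP$, so $d_i\leq d_{i+N}'$ and
\[
\sdim(R)=\lim_{i\to\infty}\frac{d_i}{i+1}\ \leq\ \lim_{i\to\infty}\frac{d_{i+N}'}{i+1}=\sdim(R/\p_0),
\]
which together with ``$\geq$'' yields $\sdim(R)=\sdim(R/\p_0)\in\N$.

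I expect the main obstacle to be the homogeneity step of the last paragraph: establishing that the Krull dimension of the finitely generated $k$-algebra $R[i]$ is realized, up to a bounded shift of $i$, on the identity component of $G$. This is exactly where the Hopf-algebra structure and its commutation with $\s$ are indispensable — translations are automorphisms of $G$ shifting the filtration $(R[i])_i$ by only the fixed amount $N$ — together with the structural inputs that $G^\circ$ is irreducible, is defined by a $\s$-stable prime, and that the irreducible components of $G$ are its translates by $k$-points; the reductions and the appeal to Proposition~\ref{prop: compare stredeg} are then routine.
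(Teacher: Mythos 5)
Your proposal is correct in outline, but it proves the theorem by a genuinely different route than the paper: the paper simply invokes the external result \cite[Theorem 3.7]{Wibmer:FinitenessPropertiesOfAffineDifferenceAlgebraicGroups}, which produces a $\s$-generating set $A$ with $k[A]$ a Hopf subalgebra and the exact eventual linearity $\dim(k[A,\ldots,\s^i(A)])=d(i+1)+e$, whence $\sdim(R)=d$. You instead stay inside the toolkit of this paper: base change to an uncountable algebraically closed $\s$-field (Proposition~\ref{prop: sdim and base extension}), pass to $R_{\operatorname{red}}$ (Proposition~\ref{prop: sdim and radical}), cut out the unique irreducible component through the identity by a $\s$-stable prime $\p_0$, and use translations (whose comorphisms shift the filtration $R[i]$ by a uniform $N$ because $\Delta$ commutes with $\s$) to dominate every minimal prime's contribution to $d_i$ by $d'_{i+N}$ for $R/\p_0$, finally applying Proposition~\ref{prop: compare stredeg} to the domain $R/\p_0$. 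What the citation buys is brevity and a stronger structural statement (a Hopf-compatible generating set and exact linearity of $d_i$); what your argument buys is a proof essentially self-contained relative to the propositions of this paper (integrality ultimately coming from the eventual linearity already used in Proposition~\ref{prop: compare stredeg}), at the cost of the base-change and rational-point machinery.

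One step deserves more justification than your parenthetical gives it: $G=\spec(R)$ is \emph{not} of finite type over $k$, so the statement that the identity lies on a unique irreducible component (equivalently, that there is a unique minimal prime of $R$ contained in $\m=\ker\varepsilon$) is not the textbook fact about algebraic groups, and "reducedness of $R$ and perfectness of $k$" is not quite the right attribution. The claim is true and the classical argument does go through without finite type: if $X,Y$ are irreducible components through $e$, then $X\times_k Y$ is irreducible because a tensor product of domains over an algebraically closed field is a domain (no finite generation needed), so $\overline{\mu(X\times Y)}$ is an irreducible closed set containing $X\cup Y$, forcing $X=Y$ by maximality. With that in hand, your argument only needs that the translation automorphism $\lambda_{g^{-1}}$ permutes irreducible components and sends $V(\pP)$ to the component through $e$, rather than the stronger claim that all components are translates of $G^\circ$; and the existence of a $k$-point on each component follows, as you indicate, from Lang's Nullstellensatz for countably generated algebras over an uncountable algebraically closed field (the same result quoted in Lemma~\ref{lemma: Zariski dense}), together with the (routine but worth stating) existence of an uncountable algebraically closed $\s$-field extension of $k$. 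With these points spelled out, your proof is complete.
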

\begin{proof}
	In \cite[Theorem 3.7]{Wibmer:FinitenessPropertiesOfAffineDifferenceAlgebraicGroups} it is shown that there exists a finite subset $A$ of $R$ such that $k\{A\}=R$, $k[A]$ is a Hopf-subalgebra of $R$ and 
	$\dim(k[A,\ldots,\s^i(A)])=d(i+1)+e$ for some $d,e\in \N$ and $i\gg 0$. So $\sdim(R)=d\in\N$.
\end{proof}

We next address the question, which non-negative real numbers $d$ are of the form $d=\sdim(F)$ for some $F\subseteq k\{y_1,\ldots,y_n\}$? As a first step, we show that one can reduce to the case that $F$ consists of $\s$-monomials. Then, we will further reduce to the case of monomial $\s$-ideals generated by squarefree $\s$-monomials.

A \emph{$\s$-monomial} in the $\s$-variables $y_1,\ldots,y_n$ is a monomial in the variables $\s^i(y_j)$, $i\in\N$, $j\in\{1,\ldots,n\}$. A $\s$-ideal $M$ of $k\{y_1,\ldots,y_n\}$ is a \emph{monomial $\s$-ideal} if it is of the form $M=[F]$ for some set $F\subseteq k\{y_1,\ldots,y_n\}$ of $\s$-monomials.

\begin{lemma} \label{lem: groebner}
	For any $F\subseteq k\{y_1,\ldots,y_n\}$ there exists a monomial $\s$-ideal $M$ of $k\{y_1,\ldots,y_n\}$ with $\sdim(F)=\sdim(M)$.
\end{lemma}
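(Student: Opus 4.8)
The plan is to use the theory of Gröbner bases for difference ideals (as in \cite{Levin}, or more directly via the classical construction of a term order on the $\s$-polynomial ring) to pass from $[F]$ to its initial (leading-term) $\s$-ideal, which is monomial, while controlling the $\s$-dimension through the filtration by order. Recall from Remark \ref{rem: cut at i} that $\sdim(F)=\sdim(k\{y_1,\ldots,y_n\}/[F])=\lim_{i\to\infty}\frac{d_i}{i+1}$ where $d_i=\dim(k\{y\}[i]/[F][i])$ and $[F][i]=[F]\cap k\{y\}[i]$. The key classical fact is that for a monomial order on the (ordinary) polynomial ring $k\{y\}[i]=k[y_1,\ldots,\s^i(y_n)]$, passing to the initial ideal $\operatorname{in}([F][i])$ preserves Krull dimension: $\dim(k\{y\}[i]/[F][i])=\dim(k\{y\}[i]/\operatorname{in}([F][i]))$.

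So first I would fix a term order on the variables $\s^l(y_j)$ that is compatible with $\s$ in the sense that multiplying a monomial's exponents through by the shift $\s$ (i.e. replacing each $\s^l(y_j)$ by $\s^{l+1}(y_j)$) is order-preserving; the standard choice is to order first by total order $\max l$, then lexicographically, arranged so that $\s$ respects the ordering. Let $M$ be the monomial $\s$-ideal generated by the leading $\s$-monomials of all elements of $[F]$; equivalently $M=[\operatorname{lm}(f)\mid f\in[F]]$. The compatibility of the term order with $\s$ ensures that this set of leading monomials is genuinely $\s$-stable once we close under $\s$, i.e. that $M$ is a monomial $\s$-ideal in the sense of the paper. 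The second step is to show $M[i]=M\cap k\{y\}[i]$ coincides, at least for each fixed $i$, with the ordinary initial ideal $\operatorname{in}([F][i])\subseteq k\{y\}[i]$ — or at least that they have the same Krull dimension. The inclusion $\operatorname{in}([F][i])\subseteq M[i]$ is immediate since $M$ contains all leading monomials of $[F][i]\subseteq[F]$. For the reverse direction one needs that the generators of $M$ of order $\le i$ (the leading monomials of $\s$-polynomials in $[F]$ of order $\le i$, together with $\s$-shifts of lower-order leading monomials that land in order $\le i$) are already leading monomials of elements of $[F][i]$ — which holds because if $f\in[F]$ has order $\le i$ then $\operatorname{lm}(f)\in\operatorname{in}([F][i])$, and an element of $[F]$ of order $\le i$ obtained by $\s$-shifting a lower order element of $[F]$ still lies in $[F]$.

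Granting that $\dim(k\{y\}[i]/M[i])=\dim(k\{y\}[i]/\operatorname{in}([F][i]))=\dim(k\{y\}[i]/[F][i])=d_i$ for every $i$, the two sequences $(d_i)$ defining $\sdim(F)$ and $\sdim(M)$ agree termwise, hence have the same limit, giving $\sdim(F)=\sdim(M)$. The main obstacle I anticipate is the second step — verifying that the order-$i$ truncation of the monomial $\s$-ideal $M$ really does agree with the classical initial ideal $\operatorname{in}([F][i])$, rather than being strictly larger. This is where the $\s$-compatibility of the term order is essential: one must rule out leading monomials in $M[i]$ that arise only as $\s$-shifts $\s^m(\operatorname{lm}(f))$ of leading monomials of higher-order (order $>i$ after one allows shifting backward) elements $f\in[F]$ without themselves being leading monomials of anything in $[F][i]$. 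If this equality of truncated ideals is too delicate to establish directly, a robust fallback is to bypass it: one can take $M$ to be the $\s$-ideal generated by $\bigcup_i \operatorname{in}([F][i])$, argue directly that this is a monomial $\s$-ideal, and check — using that $\operatorname{in}([F][i])\subseteq \operatorname{in}([F][i+1])\cap k\{y\}[i]$ together with the subadditivity already exploited in the proof of Theorem \ref{theo:sdimdef} — that $\dim(k\{y\}[i]/M[i])$ and $d_i$ differ by at most a sublinear (indeed bounded) amount, so that the normalized limits still coincide. I would first attempt the clean termwise-equality argument and only retreat to this asymptotic version if the truncation identity resists a short proof.
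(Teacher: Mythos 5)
Your proposal is correct and follows essentially the same route as the paper: fix a term order on the $\s$-monomials compatible with $\s$, pass to the $\s$-ideal of leading monomials $M=\lm([F])$, identify its order-$i$ truncation with the classical initial ideal of $[F][i]$, and use that passing to leading monomials preserves Krull dimension termwise in $i$. The one obstacle you flag (ruling out leading monomials in $M[i]$ coming from elements of $[F]$ of order $>i$) is settled immediately by your own choice of ordering by order first, which is the paper's property that $\ord(f)<\ord(g)$ implies $f<g$: if $\lm(f)$ has order $\leq i$ then every monomial of $f$ does, so $f\in[F][i]$ and $\lm([F])[i]=\lm([F][i])$, making your asymptotic fallback unnecessary.
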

\begin{proof}
	For the proof we will use some notions (orderings and leading monomials) from the theory of difference Gr\"{o}bner bases (\cite{LaScala:GroebnerBasesAndGradingsForPartialDifferenceIdeals,GerdtLaScala:NoetherianQuotientsOfTheAlgebraOfPartialDifferencePolynomialsAndGroebnerBases}). We fix a total order $\leq$ on the set of all $\s$-monomials in $y_1,\ldots,y_n$. Indeed, let us be concrete and choose $\leq$ as the lexicographic order with $$y_1<y_2<\ldots<y_n<\s(y_1)<\s(y_2)<\ldots<\s(y_n)<\s^2(y_1)<\ldots.$$ Then $\leq$ satisfies the following properties:
	\begin{enumerate}
		\item $\leq$ is a well-order, i.e., every descending chain of $\s$-monomials is finite.
		\item $1\leq f$ for every $\s$-monomial $f$.
		\item If $f\leq g$, then $hf\leq hg$ for $\s$-monomials $f,g,h$.		
		\item If $f\leq g$, then $\s(f)\leq\s(g)$ for $\s$-monomials $f,g$.
		\item If $\ord(f)<\ord(g)$, then $f<g$ for $\s$-monomials $f,g$.
	\end{enumerate}
Recall that the order $\ord(f)$ of a $\s$-polynomial $f$ is the largest power of $\s$ that occurs in $f$.
Let us write a non-zero $\s$-polynomial $f\in k\{y_1,\ldots,y_n\}$ as $f=\sum_{j=1}^m c_jf_j$ for coefficients $c_j\in k\smallsetminus\{0\}$ and distinct $\s$-monomials $f_j$. The \emph{leading monomial} $\lm(f)$ of $f$ is the largest $f_j$. For $f=0$, we set $\lm(f)=0$.
For a $\s$-ideal $I$ of $k\{y_1,\ldots,y_n\}$, we set
$$\lm(I)=(\lm(f)|\ f\in I)\subseteq k\{y_1,\ldots,y_n\}.$$  Thanks to (iv) above, we see that $\lm(I)$ is a $\s$-ideal.

Define $I=[F]$ and $M=\lm(I)$. Then $M$ is a monomial $\s$-ideal and we claim that $\sdim(I)=\sdim(M)$.

With notation as in Remark \ref{rem: cut at i}, we have for $i\geq 0$, thanks to (v), that
$\lm(I[i])=\lm(I)[i]$,
where $\lm(I[i])$ is the ideal of leading monomials of $I[i]\subseteq k[y_1,\ldots,y_n,\ldots,\s^{i}(y_1),\ldots,\s^{i}(y_n)]$ with respect to the lexicographic order with $y_1<y_2<\ldots<\s^{i}(y_n)$.
%
The dimension of an ideal in a polynomial ring over a field agrees with the dimension of its ideal of leading monomials (\cite[Corollary 7.5.5]{GreuelPfister:SingularIntroductionToCommutativeAlgebra}). Thus
$$\dim(k\{y\}[i]/I[i])=\dim(k\{y\}[i]/\lm(I[i]))=\dim(k\{y\}[i]/\lm(I)[i])=\dim(k\{y\}[i]/M[i])$$ and $\sdim(I)=\sdim(M)$ as desired. 
\end{proof}

It remains to determine the possible $\s$-dimensions of monomial $\s$-ideals. As we will see, this can be reduced to a purely combinatorial problem, which we now describe.
 
Define $\s\colon \N\times\{1,\ldots,n\}\to \N\times\{1,\ldots,n\}$ by $\s(i,j)=(i+1,j)$. For a finite subset $S$ of $\N\times\{1,\ldots,n\}$ we set $\ord(S)=\max\{i|\ \exists\ j : (i,j)\in S\}$. Let $\mathcal{S}$ be a set of non-empty finite subsets of $\N\times\{1,\ldots,n\}$. For $i\geq 0$ we define
$$\tau(\mathcal{S},i)=\min\{|T|\ |\ T\subseteq \N\times\{1,\ldots,n\},\ T\cap\s^\ell(S)\neq \emptyset, \ \forall \ S\in\mathcal{S},\ 0\leq\ell\leq i-\ord(S) \}.$$
In other words, if $[\mathcal{S}]=\{\s^\ell(S)|\ S\in\mathcal{S},\ \ell\in\N\}$ and $$[\mathcal{S}][i]=\{S\in [\mathcal{S}]|\ S\subseteq \{0,\ldots,i\}\times \{1,\ldots,n\}\},$$
then $$\tau(\mathcal{S},i)=\min\{|T|\ |\ T\subseteq \N\times\{1,\ldots,n\},\ T\cap S\neq \emptyset, \ \forall \ S\in [\mathcal{S}][i]\}.$$

It follows from the proof of the following lemma (and Theorem \ref{theo:sdimdef}) that $C(\mathcal{S})=\lim_{i\to\infty}\frac{\tau(\mathcal{S},i)}{i+1}$ exists.
Since $T=\{0,\ldots,i\}\times\{1,\ldots,n\}$ intersects every non-empty subset of $\{0,\ldots,i\}\times\{1,\ldots,n\}$, we have $\tau(\mathcal{S},i)\leq (i+1)n$ and therefore $0\leq C(\mathcal{S})\leq n$. We set $\sdim(\mathcal{S})=n-C(\mathcal{S})$.

%
%

For a finite subset $S$ of $\N\times\{1,\ldots,n\}$ we set $y^S=\prod_{(i,j)\in S}\s^i(y_j)$.  Furthermore we define $M(\mathcal{S})=[\{y^S|\ S\in\mathcal{S}\} ]\subseteq k\{y_1,\ldots,y_n\}$.
The proof of the following lemma, generalizes some aspects of the proof of Theorem~\ref{theo: covering density and smonomial}.

\begin{lemma} \label{lem: sdim of smonomial ideal}
	Let $\mathcal{S}$ be a set of non-empty finite subsets of $\N\times\{1,\ldots,n\}$. Then $\sdim(M(\mathcal{S}))=\sdim(\mathcal{S})$.
\end{lemma}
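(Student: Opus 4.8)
The plan is to adapt the argument in the proof of Theorem~\ref{theo: covering density and smonomial} to the multivariate monomial setting. By Remark~\ref{rem: cut at i} we have $\sdim(M(\mathcal{S}))=\lim_{i\to\infty}\frac{d_i}{i+1}$, where $d_i=\dim(k\{y\}[i]/M(\mathcal{S})[i])$ and $k\{y\}[i]=k[\s^a(y_b)\,|\,(a,b)\in\{0,\ldots,i\}\times\{1,\ldots,n\}]$ is a polynomial ring in $n(i+1)$ variables. So the whole task is to compute $d_i$.

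First I would pin down $M(\mathcal{S})[i]$ explicitly. Since $M(\mathcal{S})=[\{y^S\,|\,S\in\mathcal{S}\}]$ is a monomial $\s$-ideal, it is generated, as an ideal of $k\{y_1,\ldots,y_n\}$, by the squarefree monomials $y^{\s^\ell(S)}=\s^\ell(y^S)$ with $S\in\mathcal{S}$ and $\ell\in\N$. A monomial of $k\{y\}[i]$ belongs to $M(\mathcal{S})$ if and only if it is divisible by one of these generators, and divisibility forces every variable occurring in that generator to have $\s$-exponent $\leq i$, i.e.\ $\ell\leq i-\ord(S)$. Hence $M(\mathcal{S})[i]$ is precisely the monomial ideal of $k\{y\}[i]$ generated by the (finitely many, since there are only finitely many squarefree monomials in $n(i+1)$ variables) monomials $y^{\s^\ell(S)}$ with $S\in\mathcal{S}$ and $0\leq\ell\leq i-\ord(S)$; this is the exact analog of the identity $[f][i]=[f,\s(f),\ldots,\s^{i-\alpha_n}(f)]$ used in the proof of Theorem~\ref{theo: covering density and smonomial}.

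Next I would apply the dimension formula for squarefree monomial ideals (\cite[Chapter 9, \S 1, Prop. 3]{CoxLittleOShea:IdealsVarietiesAndAlgorithms}), just as in that proof: for $M=(\prod_{l\in S_j}y_l\,|\,j=1,\ldots,r)\subseteq k[y_1,\ldots,y_m]$ one has $\dim(k[y_1,\ldots,y_m]/M)=m-\min\{|T|\,|\,T\subseteq\{1,\ldots,m\},\ T\cap S_j\neq\emptyset\ \forall j\}$. Taking $m=n(i+1)$, identifying the variable index set with $\{0,\ldots,i\}\times\{1,\ldots,n\}$ and the $S_j$ with the sets $\s^\ell(S)$ above, this gives
$$d_i=n(i+1)-\min\{\,|T|\,:\,T\subseteq\{0,\ldots,i\}\times\{1,\ldots,n\},\ T\cap\s^\ell(S)\neq\emptyset\ \forall\,S\in\mathcal{S},\ 0\leq\ell\leq i-\ord(S)\,\}.$$
It then remains to identify this minimum with $\tau(\mathcal{S},i)$. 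Every set $\s^\ell(S)$ appearing in the constraint lies inside $\{0,\ldots,i\}\times\{1,\ldots,n\}$, so given any transversal $T\subseteq\N\times\{1,\ldots,n\}$ of the family $\{\s^\ell(S)\}$ one may replace it by $T\cap(\{0,\ldots,i\}\times\{1,\ldots,n\})$ without destroying the transversal property and without increasing its cardinality; hence the minimum defining $\tau(\mathcal{S},i)$ (over subsets of $\N\times\{1,\ldots,n\}$) equals the minimum above (over subsets of $\{0,\ldots,i\}\times\{1,\ldots,n\}$). Therefore $d_i=n(i+1)-\tau(\mathcal{S},i)$, and
$$\sdim(M(\mathcal{S}))=\lim_{i\to\infty}\frac{d_i}{i+1}=n-\lim_{i\to\infty}\frac{\tau(\mathcal{S},i)}{i+1}=n-C(\mathcal{S})=\sdim(\mathcal{S});$$
in particular the existence of $C(\mathcal{S})$ is a byproduct, following from the existence of $\lim_{i\to\infty}\frac{d_i}{i+1}$ guaranteed by Theorem~\ref{theo:sdimdef} via Remark~\ref{rem: cut at i}.

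The only step that is not entirely routine is the explicit description of $M(\mathcal{S})[i]$ in the second paragraph: for an arbitrary $\s$-ideal the truncation $I\cap k\{y\}[i]$ is genuinely hard to control, but for a monomial $\s$-ideal it collapses to the elementary divisibility observation above, so I do not expect any real obstacle here.
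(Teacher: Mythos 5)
Your proposal is correct and follows essentially the same route as the paper's proof: truncate to $M(\mathcal{S})[i]=\bigl(\s^\ell(y^S)\,\big|\,S\in\mathcal{S},\ 0\leq\ell\leq i-\ord(S)\bigr)$, apply the dimension formula for squarefree monomial ideals from Cox--Little--O'Shea, identify the resulting minimum (over transversals inside $\{0,\ldots,i\}\times\{1,\ldots,n\}$, which by intersecting may be taken there without loss) with $\tau(\mathcal{S},i)$, and pass to the limit via Remark~\ref{rem: cut at i} and Theorem~\ref{theo:sdimdef}. The only difference is that you spell out the divisibility argument for the truncation identity, which the paper states without proof.
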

\begin{proof}
	Using the notation of Remark \ref{rem: cut at i}, we have
	$$M(\mathcal{S})[i]=\left(\s^\ell(y^S)|\ S\in\mathcal{S},\ 0\leq \ell\leq i-\ord(S)\right)\subseteq k\{y\}[i]$$
	for every $i\geq 0$. Using the description of the dimension of monomial ideals in a polynomial ring as in the proof of Theorem \ref{theo: covering density and smonomial} (cf. \cite[Chapter 9, \S 1, Prop. 3]{CoxLittleOShea:IdealsVarietiesAndAlgorithms}), we see that
	$\dim(k\{y\}[i]/M(\mathcal{S})[i])=n(i+1)-e_i$ where 
	\begin{align*}e_i & =\min\{|T|\ |\ T\subseteq \{0,\ldots,i\}\times\{1,\ldots,n\},\ T\cap\s^\ell(S)\neq \emptyset, \ \forall \ S\in\mathcal{S},\ 0\leq\ell\leq i-\ord(S) \} \\
	& =\min\{|T|\ |\ T\subseteq \N\times\{1,\ldots,n\},\ T\cap\s^\ell(S)\neq \emptyset, \ \forall \ S\in\mathcal{S},\ 0\leq\ell\leq i-\ord(S)\}\\
	& =\tau(\mathcal{S},i).
	\end{align*}
	Hence $$\sdim(M(\mathcal{S}))=\lim_{i\to\infty}\dim(k\{y\}[i]/M(\mathcal{S})[i])=n-\lim_{i\to\infty}\frac{\tau(\mathcal{S},i)}{i+1}=\sdim(\mathcal{S}).$$
\end{proof}

The following theorem gives a combinatorial description of all numbers that occur as the $\s$-dimension of a finitely $\s$-generated \ks-algebra (equivalently of a system of algebraic difference equations).

\begin{theo}
	Let $d\geq 0$ be a real number. Then $d=\sdim(F)$ for some $F\subseteq k\{y_1,\ldots,y_n\}$ if and only if $d=\sdim(\mathcal{S})$ for some set $\mathcal{S}$ of non-empty finite subsets of $\N\times\{1,\ldots,n\}$.
\end{theo}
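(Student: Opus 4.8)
The plan is to obtain both implications from the two reduction lemmas already proved, namely Lemma~\ref{lem: groebner} (every $\s$-dimension of a system equals the $\s$-dimension of a monomial $\s$-ideal) and Lemma~\ref{lem: sdim of smonomial ideal} ($\sdim(M(\mathcal{S}))=\sdim(\mathcal{S})$), together with Corollary~\ref{cor: radical} (the $\s$-dimension is insensitive to passing to the radical). The backward implication is essentially a tautology: given a set $\mathcal{S}$ of non-empty finite subsets of $\N\times\{1,\ldots,n\}$ with $\sdim(\mathcal{S})=d$, I would set $F=\{y^S\mid S\in\mathcal{S}\}\subseteq k\{y_1,\ldots,y_n\}$, so that $[F]=M(\mathcal{S})$ by definition, and conclude $\sdim(F)=\sdim([F])=\sdim(M(\mathcal{S}))=\sdim(\mathcal{S})=d$ by Lemma~\ref{lem: sdim of smonomial ideal}.

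For the forward implication, suppose $\sdim(F)=d\geq 0$ for some $F\subseteq k\{y_1,\ldots,y_n\}$. First I would invoke Lemma~\ref{lem: groebner} to replace $F$ by a monomial $\s$-ideal $M$ with $\sdim(M)=d$, and write $M=[G]$ for a set $G$ of $\s$-monomials. Since $d\geq 0$, the ideal $M$ is proper (otherwise $k\{y_1,\ldots,y_n\}/M=0$ would have $\s$-dimension $-\infty$), so $1\notin G$ and every $g\in G$ is a non-constant $\s$-monomial, hence has a non-empty finite support $\supp(g)\subseteq\N\times\{1,\ldots,n\}$. Then I would set $\mathcal{S}=\{\supp(g)\mid g\in G\}$ and claim $d=\sdim(\mathcal{S})$. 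The point is that $M$ and $M(\mathcal{S})=[\{y^S\mid S\in\mathcal{S}\}]$ have the same radical: for $g\in G$ with $S=\supp(g)$ the squarefree monomial $y^S$ divides $g$, giving $g\in(y^S)\subseteq M(\mathcal{S})$ and hence $M\subseteq M(\mathcal{S})$; conversely $g$ divides $(y^S)^N$ for $N$ the largest exponent appearing in $g$, so $(y^S)^N\in M$, and since the radical of a $\s$-ideal is again a $\s$-ideal this forces $y^S\in\sqrt{M}$ for every $S\in\mathcal{S}$, whence $M(\mathcal{S})\subseteq\sqrt{M}$. Taking radicals on both sides gives $\sqrt{M}=\sqrt{M(\mathcal{S})}$, so Corollary~\ref{cor: radical} (applied to $M$ and to $M(\mathcal{S})$) yields $\sdim(M)=\sdim(M(\mathcal{S}))$, and Lemma~\ref{lem: sdim of smonomial ideal} finishes with $\sdim(M(\mathcal{S}))=\sdim(\mathcal{S})$. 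Chaining these equalities, $d=\sdim(F)=\sdim(M)=\sdim(M(\mathcal{S}))=\sdim(\mathcal{S})$.

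Since the argument is just an assembly of the preceding lemmas, there is no genuine obstacle; the only steps needing attention are the elementary divisibility computations relating a $\s$-monomial to its squarefree reduction (and the resulting equality of radicals), and the observation that $\mathcal{S}$ is allowed to be an infinite family of finite sets, which is consistent with the combinatorial setup introduced before Lemma~\ref{lem: sdim of smonomial ideal}.
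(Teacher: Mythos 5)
Your proposal is correct and follows essentially the same route as the paper: reduce to a monomial $\s$-ideal via Lemma~\ref{lem: groebner}, pass to the square-free parts (equivalently, the supports $\supp(g)$), use insensitivity to radicals, and conclude with Lemma~\ref{lem: sdim of smonomial ideal}; the backward direction is handled identically. The only cosmetic difference is that you deduce $\sqrt{M}=\sqrt{M(\mathcal{S})}$ and apply Corollary~\ref{cor: radical} twice, whereas the paper sandwiches $[E]\subseteq[E']\subseteq\sqrt{[E]}$ and combines Corollary~\ref{cor: morphism and sdim}(ii) with Corollary~\ref{cor: radical} --- the same argument in a slightly different arrangement.
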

\begin{proof}
	If $d=\sdim(\mathcal{S})$, then $d=\sdim(F)$ for $F=M(\mathcal{S})$ by Lemma \ref{lem: sdim of smonomial ideal}. 
	
	Conversely, assume that $d=\sdim(F)$ for some $F\subseteq k\{y_1,\ldots,y_n\}$. By Lemma~\ref{lem: groebner} we can assume without loss of generality that $F=M$ is a monomial $\s$-ideal. 
	Let $E\subseteq k\{y_1,\ldots,y_n\}$ be a set of $\s$-monomials such that $M=[E]\subseteq k\{y_1,\ldots,y_n\}$. 
	
	Let us refer to a $\s$-monomial as square-free if it is square-free as a monomial in the variables $\s^i(y_j)$. The square-free part of a $\s$-monomial is defined in a similar spirit, i.e., by replacing all non-zero exponents with $1$'s. Let $E'\subseteq k\{y_1,\ldots,y_n\}$  be the set of all square-free parts of all $\s$-monomials in $E$. Then $$[E]\subseteq[E']\subseteq\sqrt{[E]}.$$
	It thus follows from Corollary \ref{cor: morphism and sdim} (ii) and Corollary \ref{cor: radical} that $\sdim([E])=\sdim([E'])$.
	To specify a (non-constant) square-free $\s$-monomial is equivalent to specifying a (non-empty) finite subset $S$ of $\N\times\{1,\ldots,n\}$.
	Thus $[E']=M(\mathcal{S})$ for some set $\mathcal{S}$ of finite non-empty subsets of $\N\times\{1,\ldots,n\}$. In summary, $$\sdim(F)=\sdim([E])=\sdim([E'])=\sdim(M(\mathcal{S}))=\sdim(\mathcal{S}),$$
	by Lemma \ref{lem: sdim of smonomial ideal}.	
\end{proof}

\bibliographystyle{alpha}
\bibliography{bibdata}

\end{document}